\RequirePackage{fix-cm}
\documentclass[smallextended,envcountsect,envcountsame]{svjour3hack}   

\usepackage[leqno]{amsmath}
\usepackage{amssymb}
\usepackage{subcaption}
\usepackage[dvipsnames]{xcolor}
\usepackage{enumitem}

\smartqed 
\usepackage{graphicx}

\usepackage[hypertexnames=false,colorlinks=true,breaklinks=true,bookmarks=true,urlcolor=blue,citecolor=blue,linkcolor=blue,bookmarksopen=false,draft=false]{hyperref}

\pdfoutput=1  

\usepackage[noblocks]{authblk}

\bibliographystyle{alpha}

\title{Convex relaxation for the generalized\\ maximum-entropy sampling problem\footnote{A short preliminary version of this work 
appeared in the proceedings of the Symposium on Experimental Algorithms (SEA) 2024, in the series of Dagstuhl's  Leibniz International Proceedings in Informatics (LIPIcs); see \cite{SEA_proceedings}.
}
}

\titlerunning{Convex relaxation for GMESP} 
\authorrunning{G. Ponte, M. Fampa, J. Lee} 

\author{Gabriel Ponte \and Marcia Fampa \and Jon Lee}

\institute{Gabriel Ponte, ORCID: 0000-0002-8878-6647\at
University of Michigan. \email{gabponte@umich.edu}      
         \and
Marcia Fampa, ORCID: 0000-0002-6254-1510\at
Universidade Federal do Rio de Janeiro. \email{fampa@cos.ufrj.br}
         \and
CORRESPONDING AUTHOR: Jon Lee, ORCID: 0000-0002-8190-1091\at
 University of Michigan. \email{jonxlee@umich.edu} 
}

\date{\small Revised June 25, 2025}

\hfuzz=6.0pt
\hbadness=2000


\DeclareMathOperator{\Diag}{Diag}
\DeclareMathOperator{\diag}{diag}

\DeclareMathOperator{\conv}{conv}
\DeclareMathOperator{\Trace}{tr}
\DeclareMathOperator{\rank}{rank}

\newtheorem{thm}{Theorem}{\bf}{\rm}
{\bf}{\rm}
\newtheorem{lem}[thm]{Lemma}{\bf}{\rm}
{\bf}{\rm}
{\bf}{\rm}
\newtheorem{rem}[thm]{Remark}{\bf}{\rm}
{\bf}{\rm}
{\bf}{\rm}

\makeatletter
\let\c@prop\c@theorem
\let\c@cor\c@theorem
\let\c@lemma\c@theorem
\let\c@definition\c@theorem
\let\c@example\c@theorem
\let\c@remark\c@theorem
\let\c@obs\c@theorem
\let\c@claim\c@theorem
\makeatother

\makeatletter
\renewcommand*{\top}{
  {\mathpalette\@transpose{}}
}
\newcommand*{\@transpose}[2]{
  \raisebox{\depth}{$\m@th#1\scriptscriptstyle\mathsf{T}$}
}
\makeatother


\begin{document}

\maketitle

\begin{abstract}
The \emph{generalized maximum-entropy sampling problem} (GMESP) is to select an order-$s$ principal submatrix from an order-$n$ covariance matrix, to maximize the product of its $t$ greatest eigenvalues, $0<t\leq s <n$. 
Introduced more than 25 years ago,
GMESP is a natural generalization of two fundamental problems in statistical design theory:
(i) maximum-entropy sampling problem (MESP);
(ii) binary D-optimality (D-Opt). 
In the general case, it can be
motivated by   
a selection problem in the context of principal component analysis (PCA).

We introduce the first convex-optimization based relaxation for GMESP, study its behavior, compare it to an earlier spectral bound, and demonstrate its use in a branch-and-bound scheme. We find that
such an approach is practical when $s-t$ is very small. 

\medskip
\noindent {\bf Keywords} maximum-entropy sampling, D-optimality, convex relaxation,  integer nonlinear optimization, branch-and-bound, principal component analysis

\end{abstract}


\section{Introduction}\label{sec:int}

Throughout, $C$ is a symmetric positive-semidefinite matrix with rows/columns
indexed from $N:=\{1,2,\ldots,n\}$, with $n >1$ and $r:=\rank(C)$.
For integers $t$ and $s$, such that $0<t\leq r$ and $t\leq s<n$, we define the \emph{generalized maximum-entropy sampling problem} (see \cite{WilliamsPhD,LeeLind2020})
\begin{equation}\tag{GMESP}\label{GMESP}
\begin{array}{ll}
z(C,s,t):=&\max \left\{\sum_{\ell=1}^t \log (\lambda_\ell(C[S(x),S(x)])) ~:~ 
\right.\\
&\qquad\qquad\qquad\qquad\left.\mathbf{e}^\top x =s,~ x\in\{0,1\}^n\vphantom{\sum_{\ell=1}^t }\right\},
\end{array}
\end{equation}
where $S(x):=\{j\in N ~:~ x_j\neq 0\}$ denotes the \emph{support} of $x\in\{0,1\}^n$, 
 $C[S,S]$ (for $\emptyset\neq S\subset N$) denotes the principal submatrix of $C$ indexed by $S$, and
$\lambda_\ell(X)$ denotes the $\ell$-th greatest eigenvalue of a symmetric matrix $X$. 

More than twenty-five years ago, \ref{GMESP} was introduced in the Ph.D. dissertation of Joy Lind\footnote{n\'ee Williams} as a common generalization of the maximum-entropy sampling problem (MESP) and the binary D-optimality problem (D-Opt) (see \cite{WilliamsPhD}, but not widely disseminated until \cite{LeeLind2020}).
 MESP, a central problem in statistics and information theory,  corresponds to the problem of selecting a subvector of size $s$ from
 a Gaussian $n$-vector, so as to maximize the ``differential entropy''
 (see \cite{Shannon}) of the chosen subvector; see \cite{FLBook} for an in-depth treatment. 
MESP is the special case of \ref{GMESP} for which $t:=s$.
The relationship with binary D-Opt is a bit more involved. Given an $n\times r$ matrix $\mathcal{A}$ of full column rank,
binary D-Opt corresponds to the special case of \ref{GMESP} for which $C:=\mathcal{A}\mathcal{A}^\top$, and $t:=r$. Binary D-Opt is equivalent to the problem of selecting a set of $s$ design points from a given set of $n$ potential design points (the rows of $\mathcal{A}$), so as to minimize the volume of a confidence ellipsoid for the least-squares parameter estimates in the resulting linear model (assuming additive Gaussian noise); see \cite{PonteFampaLeeMPB}, for example. 

For the general case of \ref{GMESP}, we can see it as motivated by 
a selection problem in the context of principal component analysis (PCA); see, for example,  \cite{PCA} and the references therein, for the very important topic of PCA.
Specifically, \ref{GMESP} amounts to 
selecting a subvector of size $s$ from
 a Gaussian $n$-vector, so that the geometric mean of the variances associated with the $t$ largest principal components is maximized.
 Linking this back to MESP, we
 can see that problem as selecting a subvector of size $s$ from
 a Gaussian $n$-vector, so that the geometric mean of the variances associated with \emph{all} principal components is maximized. 
Working with the geometric mean of the variances, encourages a selection
where all $t$ of them are large and similar in value.\footnote{In this spirit, the product of sample variances is used in Bartlett's test of homogeneity of variances (for the normal case); see \cite[Section 10.21, pp. 296]{Snedecor}, and available for example in \texttt{SciPy} as \texttt{scipy.stats.bartlett}, \url{https://docs.scipy.org/doc/scipy/reference/generated/scipy.stats.bartlett.html}.} We note that maximizing the geometric mean of some positive numbers is equivalent to maximizing the sum of their logarithms, which links back to our objective function in \ref{GMESP}.
 
 Expanding on our motivation for \ref{GMESP}, we assume that 
 we are in a setting where we have $n$ observable Gaussian random variables, with
 a possibly rank-deficient covariance matrix. We assume that observations are costly, and so we 
 want to select $s \ll n$  for observation. Even the $s$ selected random variables 
 may have a low-rank covariance matrix. 
 Posterior to the selection, we would then carry out
PCA on the associated order-$s$ covariance matrix,
with the aim of identifying the most informative $t < s$ latent/hidden random variables,
where we define \emph{most informative} as corresponding to maximizing the geometric mean of the variances of the 
$t$ dominant principal components. 

We also define the \emph{constrained generalized  maximum-entropy sampling problem}
\begin{equation}\tag{CGMESP}\label{CGMESP} 
\begin{array}{ll}
z(C,s,t,A,b):=&\max \left\{\textstyle \vphantom{\sum_{j\in S}} \sum_{\ell=1}^t \log (\lambda_\ell(C[S(x),S(x)])) ~:~\right.\\
& \qquad\qquad\quad\left. \mathbf{e}^\top x =s,~Ax\leq b,~ x\in\{0,1\}^n\vphantom{\sum_{\ell=1}^t }\right\},
\end{array}
\end{equation}
where $A\in\mathbb{R}^{m\times n}$  and $b\in\mathbb{R}^m$, which is very useful in practical applications where there are budget constraints, logistical constraints, etc., on which sets of size $s$ are feasible. Correspondingly, we also refer to CMESP (the constrained version of MESP) and binary CD-Opt (the constrained version of binary D-Opt). 

The special cases of MESP and binary D-Opt are already NP-hard (see \cite{KLQ}
and \cite[Sec. 5]{gritz}), and the main approach for solving them (and the more general \ref{GMESP} and \ref{CGMESP}) to optimality is B\&B (branch-and-bound) (see \cite{LeeLind2020}). 
Lower bounds are calculated by local search (especially for \ref{GMESP}, in contrast to \ref{CGMESP}), rounding, etc.
Upper bounds are calculated in a variety of ways.
The only upper-bounding method
in the literature for \ref{GMESP}/\ref{CGMESP} uses spectral information; see
\cite{WilliamsPhD,LeeLind2020}.

Some very good  upper-bounding methods for CMESP and CD-Opt
are based on convex relaxations; see 
\cite{AFLW_Using,Anstreicher_BQP_entropy,Kurt_linx,Nikolov,Weijun,ChenFampaLee_Fact,PonteFampaLeeMPB}. 
For CMESP, a  ``down branch''
is realized by deleting a symmetric row/column pair from $C$.
An ``up branch'' corresponds to 
calculating a Schur complement. 
For binary CD-Opt, a ``down branch'' amounts to eliminating a row from $\mathcal{A}$, and 
an ``up branch'' corresponds to 
adding a rank-1 symmetric matrix before applying a determinant operator to an order-$r$  symmetric matrix that is linear in $x$ (see \cite{li2022d} and \cite{PonteFampaLeeMPB} for details).

For the general case of \ref{CGMESP} and the spectral bounding technique,
we refer to \cite{LeeLind2020} for a discussion of an ``up branch'', which is actually quite complicated and probably not very efficient. We will present a convex relaxation
for \ref{CGMESP} that is amenable to the use of a simple ``up branch''.  Our new ``generalized factorization bound''
for \ref{CGMESP} directly generalizes (i) the ``factorization bound''
for CMESP (see \cite{ChenFampaLee_Fact}), and (ii) the ``natural bound'' for 
binary CD-Opt (see \cite{PonteFampaLeeMPB}). 
We wish to emphasize that it does \emph{not directly generalize}
the ``factorization bound'' for binary CD-Opt (also from \cite{PonteFampaLeeMPB}). However, if we recast a binary CD-Opt instance as a CMESP instance, employing the transformation of \cite[Remark 8]{PonteFampaLeeMPB}, and then recast that instance of CMESP as an instance of \ref{CGMESP}, and then apply our new ``generalized factorization bound'' to that instance of \ref{CGMESP}, we obtain the same bound as the 
 ``factorization bound'' for the binary CD-Opt instance that we started with. So, in a sense, our new ``generalized factorization bound'' unifies  the ``natural bound'' and the
  ``factorization bound'' for binary CD-Opt.

\medskip
\noindent {\bf Organization and Contributions.} 
In \S\ref{sec:bounds}, we introduce the ``generalized factorization bound'' for \ref{CGMESP} as the Lagrangian dual of a non-convex relaxation and establish its fundamental properties and its relation with the spectral bound for \ref{CGMESP} from \cite{LeeLind2020}. Although the development of the 
bounding formulation ``DGFact''  closely mirrors the corresponding development for the special case of CMESP (i.e., $t\!:=\!s$), there are 
a lot of details that need to be carefully worked through
to rigorously establish the bound. 
Further, while the proofs (which we omit) of some very-useful basic properties (Theorems \ref{thm:scale}, \ref{cor:FactDoesntMatter} and \ref{thm:fixFact}) 
closely follow the corresponding proofs for CMESP,
there is more to establishing other fundamental results (Theorem \ref{thm:almost_dominates}), and others
have no analog for CMESP
(Theorems \ref{thm:concavity} and \ref{thm:disc}).

In \S\ref{sec:DDFact}, we apply Lagrangian duality again,
reaching a more tractable formulation \ref{DDGFact} for calculating the 
generalized factorization bound.
Again, although the development closely mirrors the corresponding development for the special case of CMESP, there are 
a lot of details that need to be carefully worked through. Additionally, we wish to highlight Theorem \ref{dualoptimallcfa} (and Lemma \ref{lem:optimal_xhat_diagf})
which even for MESP and binary D-Opt is completely new.

In \S\ref{sec:exp}, we present results from computational experiments with a B\&B algorithm based on the generalized factorization bound, where we
demonstrate favorable computational performance, for \ref{GMESP}, when $s-t$ is quite small. For  \ref{CGMESP}, the results are even more promising, with computational times to solve the test instances significantly reduced, compared to \ref{GMESP}. Finally, we present  numerical comparisons between the spectral bound and the generalized factorization bound for both \ref{GMESP} and \ref{CGMESP}, showing much better results for the latter  when $n-s$ is large and $s-t$ is small. It is unfortunate, from the perspective of the motivating PCA selection problem, that we do not yet have a very effective technique for the case in which $s-t$ is large.

In \S\ref{sec:out}, we describe some directions for further study. 

\medskip
\noindent {\bf Notation.}
 We let $\mathbb{S}^n_+$ (resp., $\mathbb{S}^n_{++}$) denote the set of
 positive semidefinite (resp., definite) symmetric matrices of order $n$. We let $\Diag(x)$ denote the $n\times n$ diagonal matrix with diagonal elements given by the components of $x\in \mathbb{R}^n$, and $\diag(X)$ denote the $n$-dimensional vector with elements given by the diagonal elements of $X\in\mathbb{R}^{n\times n}$.
We denote an all-ones  vector
by $\mathbf{e}$ and the $i$-th standard unit vector by $\mathbf{e}_i$\thinspace.
For matrices $A$ and $B$ with the compatible shapes,
$A\bullet B:=\Trace(A^\top B)$ is the matrix dot-product.
For a matrix $A$, we denote row $i$ by $A_{i\cdot}$ and
column $j$ by $A_{\cdot j}$\thinspace.

\section{Generalized factorization bound}\label{sec:bounds}

Suppose that $r:=\rank(C)\geq t$.
We factorize $C=FF^\top$,
with $F\in \mathbb{R}^{n\times k}$, for some $k$ satisfying $r\le k \le n$. 
This could be a 
Cholesky-type factorization, as in  \cite{Nikolov} and \cite{Weijun}, where $F$ is lower triangular and $k:=r$,   it could be derived from
a spectral decomposition  $C=\sum_{i=1}^r \lambda_i v_i v_i^\top$\,, by selecting   $\sqrt{\lambda_i}v_i$ as the column $i$ of $F$,
 $i=1,\ldots,k:=r$, 
 or it could be derived from 
  the matrix square root of $C$, where $F:=C^{1/2}$, and  $k:=n$.

For $x\in [0,1]^n$, we define
$f(x) :=  \sum_{\ell=1}^t \log\left( \lambda_{\ell}  (F(x))\right)$, where
$ F(x) := \sum_{i\in N} F_{i\cdot}^\top F_{i\cdot} ~x_i~ =F^\top \Diag(x)F$, and
\[
z_{{\mbox{\tiny GFact}}}(C,s,t,A,b;F):=\max \left\{f(x) \!~:~\! \mathbf{e}^{\top}x=s,~ Ax\leq b,~0\leq x\leq \mathbf{e} \right\}.\tag{GFact}\label{GFact}
\]
Next, we see that \ref{GFact} gives an upper bound for \ref{CGMESP}.
 This generalizes the corresponding result for CMESP (see \cite[Thm. 3.4.1]{FLBook}).

\begin{thm}\label{thm:Fact_bound}
\[
z(C,s,t,A,b)\leq 	z_{{\mbox{\tiny\rm GFact}}}(C,s,t,A,b;F).
\]
\end{thm}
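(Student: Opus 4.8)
The plan is to exhibit \ref{GFact} as a continuous relaxation of \ref{CGMESP}: every integer-feasible point of \ref{CGMESP} is also feasible for \ref{GFact}, and at each such point the objective $f$ reproduces exactly the \ref{CGMESP} objective. The claimed inequality is then immediate, since maximizing the same function over a larger feasible set cannot decrease the optimal value.

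First I would fix an arbitrary $x\in\{0,1\}^n$ feasible for \ref{CGMESP}, with support $S:=S(x)$ of size $s$. Because $\{0,1\}^n\subset[0,1]^n$ and the linear constraints $\mathbf{e}^\top x=s$ and $Ax\le b$ are shared by the two problems, $x$ is feasible for \ref{GFact}; only the objective values remain to be compared. Let $F_S$ denote the $s\times k$ submatrix of $F$ formed by the rows $F_{i\cdot}$ for $i\in S$. Since $\Diag(x)$ selects precisely those rows, I would record the two identities
\[
F(x)=F^\top\Diag(x)F=F_S^\top F_S \quad\text{and}\quad C[S,S]=(FF^\top)[S,S]=F_S F_S^\top.
\]

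The key step is the classical fact that $F_S^\top F_S$ (of order $k$) and $F_S F_S^\top$ (of order $s$) share the same nonzero eigenvalues with the same multiplicities. Combined with $t\le s$ and $t\le r\le k$, this yields $\lambda_\ell(F(x))=\lambda_\ell(C[S,S])$ for every $\ell=1,\dots,t$: the positive entries among the top $t$ coincide, and any remaining entries among the top $t$ are zero on both sides (each matrix has rank $\rank(F_S)$). Summing the logarithms over $\ell=1,\dots,t$ therefore gives $f(x)=\sum_{\ell=1}^t\log\lambda_\ell(C[S,S])$, the \ref{CGMESP} objective at $x$, with both sides equal to $-\infty$ (under the convention $\log 0:=-\infty$) exactly when $\rank(F_S)<t$. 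Having matched objectives at every integer-feasible $x$, I would conclude
\[
z(C,s,t,A,b)=\max\{f(x):x\text{ integer-feasible}\}\le\max\{f(x):x\text{ continuous-feasible}\}=z_{{\mbox{\tiny\rm GFact}}}(C,s,t,A,b;F).
\]

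I expect the only subtle point to be the eigenvalue bookkeeping: verifying that the top $t$ ordered eigenvalues of the two matrices agree despite their differing orders ($k$ versus $s$), and checking that the rank-deficient case is handled consistently so the logarithm is well defined. This is exactly where the hypotheses $t\le s$ and $t\le r\le k$ are needed, guaranteeing $t\le\min(s,k)$ so that $\lambda_\ell$ is defined for both matrices throughout the range $\ell=1,\dots,t$. The remaining feasibility containment is routine.
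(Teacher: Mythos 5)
Your proposal is correct and follows essentially the same route as the paper's proof: identifying $F(x)=F[S,\cdot]^\top F[S,\cdot]$ and $C[S,S]=F[S,\cdot]F[S,\cdot]^\top$ and invoking the fact that these share the same nonzero eigenvalues, so the top $t$ eigenvalues (and hence the objectives) coincide at every integer-feasible point. Your additional remark on the rank-deficient case (both objectives being $-\infty$) is a minor completeness point that the paper sidesteps by restricting attention to feasible solutions with finite objective value.
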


\begin{proof}
It suffices to show that for any feasible solution $x$ of \ref{CGMESP} with finite objective value,
we have $\sum_{\ell=1}^t \log\left( \lambda_{\ell} (C[S(x),S(x)] )\right) = f(x)$. Let $S:=S(x)$.
Then, for $S\subset N$ with $|S|=s$ and $\rank(C[S,S])\geq t$,
 we have
$
F(x) =  \sum_{i\in S} F_{i\cdot}^\top F_{i\cdot} = F[S,\cdot]^\top F[S,\cdot] \in \mathbb{S}_{+}^k\,.
$
Also, we have
$
C[S,S]=F[S,\cdot]F[S,\cdot]^\top \in \mathbb{S}_{+}^s\,.
$
Now, we observe that the nonzero eigenvalues of the matrices
$F[S,\cdot]^\top F[S,\cdot]$ and $F[S,\cdot]F[S,\cdot]^\top$ 
are identical, and the rank of these matrices
is at least $t$. So, the $t$ largest eigenvalues of
these matrices are positive and identical.  The result follows.
\qed\end{proof}
  
From the proof, we  see that  \ref{GFact} is an \emph{exact relaxation} of \ref{CGMESP},
in the sense that the feasible region of \ref{CGMESP} is contained in the 
feasible region of  \ref{GFact}, and feasible solutions of \ref{CGMESP} have the same objective value in \ref{GFact}  that they do in \ref{CGMESP}.
But \ref{GFact} is \emph{not} generally a convex-optimization problem\footnote{Even for $t=1$, note that while log is of course a concave function,
the maximum eigenvalue (inside the log) is convex in $x$. So the composed function, which we wish to maximize, is not generally a concave function.}, so we
cannot make direct use of such a relaxation in a B\&B based on convex relaxation. Alternatively, one might consider a global-optimization approach that handles nonconvexities at the algorithmic level, using so-called ``spatial B\&B'' (see \cite{Baron1996}, for example). But such an approach should be relatively inefficient, and moreover, there is no clear way to induce separability 
of the objective function, which is a key step in spatial B\&B.

 Closely following a technique in the literature, 
 and aiming for a convex relaxation, 
 we 
will overcome the difficulty 
(of the nonconvexity of the objective function of \ref{GFact})
using  Lagrangian duality,  
obtaining an upper bound for $z_{{\mbox{\tiny GFact}}}$\,. 
We note that although we closely follow a 
known technique, it is important to carefully work out the technical  differences 
when $t<s$.
We first re-cast \ref{GFact} as 
	\[
	\textstyle \max \left\{ \sum_{\ell=1}^t \log \left(\lambda_{\ell} \left( W \right)  \right)
	~:~F(x)=W,~ \mathbf{e}^{\top}x=s,~ Ax\leq b,~0\leq x\leq \mathbf{e} \right\},
	\]
and consider the Lagrangian function
\begin{align*}
&\mathcal{L}(W,x,\Theta,\upsilon,\nu,\pi,\tau):= \textstyle \sum_{\ell=1}^t \log \left(\lambda_{\ell} \left( W \right)  \right)
+\Theta\bullet(F(x)-W)\\
&\qquad\qquad\qquad\qquad\qquad  + \upsilon^\top x +  \nu^\top (\mathbf{e}-x) +\pi^\top (b-Ax) +\tau(s-\mathbf{e}^\top x),
\end{align*}
with $\mbox{ dom }\mathcal{L}= \mathbb{S}^{k,t}_{+} \times \mathbb{R}^{n}\times \mathbb{S}^k\times \mathbb{R}^{n}\times  \mathbb{R}^{n}\times  \mathbb{R}^{m}\times  \mathbb{R}$, where $ \mathbb{S}^{k,t}_{+}$ denotes the convex set of $k\times k$ positive semidefinite matrices with rank at least $t$.

The corresponding dual function is 
\begin{equation*}
\mathcal{L}^*(\Theta,\upsilon,\nu,\pi,\tau):=\sup_{W\in~  \mathbb{S}^{k,t}_{+},~x}\mathcal{L}(W,x,\Theta,\upsilon,\nu,\pi,\tau),
\end{equation*}
and the corresponding Lagrangian dual problem is
\[
 z_{{\mbox{\tiny DGFact}}}(C,s,t,A,b;F):=\inf\{\mathcal{L}^*(\Theta,\upsilon,\nu,\pi,\tau)~:~\upsilon\geq 0,~\nu\geq 0,~\pi\geq 0\}.
\]
We call $ z_{{\mbox{\tiny DGFact}}}:= z_{{\mbox{\tiny DGFact}}}(C,s,t,A,b;F)$
the \emph{generalized factorization bound}, as it generalizes the factorization bound for the special case of CMESP (see \cite{FLBook}).
We note that
\begin{align}
&\sup_{W\in \mathbb{S}^{k,t}_{+},~x} \textstyle
\left\{ \sum_{\ell=1}^t \log \left(\lambda_{\ell} \left( W \right)  \right)
+\Theta\bullet(F(x)-W) \right.\nonumber\\
&\left.\quad + ~ \upsilon^\top x +  \nu^\top (\mathbf{e}-x) +\pi^\top (b-Ax) +\tau(s-\mathbf{e}^\top x)\vphantom{\textstyle\sum_{\ell=1}^t }\right\} =\nonumber\\
& \sup_{W\in  \mathbb{S}^{k,t}_{+}} \textstyle 
\left\{ \sum_{\ell=1}^t \log \left(\lambda_{\ell} \left( W \right)  \right)
-\Theta\bullet W \right\}\label{supnonlin}\\
& \quad  +~ \sup_{x}\left\{ \Theta\bullet F(x) + \upsilon^\top x - \nu^\top x  - \pi^\top Ax - \tau \mathbf{e}^\top x +  \nu^\top \mathbf{e}  + \pi^\top b +\tau s\right\}.\label{suplin}
\end{align}
In Theorems \ref{dfactnonlin} and \ref{dfactlin}  we  analytically characterize the suprema in \eqref{supnonlin} and \eqref{suplin}. First, we state 
a useful lemma, used in the proof of Theorem \ref{dfactnonlin}.

\begin{lem}\label{lem:solinner}(see \cite[Lemma 11]{Weijun})
Given  $\hat \lambda_1\geq\hat \lambda_2\geq\cdots\geq \hat \lambda_k$ and $0< \hat \beta_1\leq\hat \beta_2\leq\cdots\leq \hat \beta_{k}$\,, we have that $\hat\theta_\ell:=\hat\beta_\ell$\,, for $\ell=1,\ldots,k$, solves 
\begin{equation*} 
 \min_{\substack{ \theta\in  \mathbb{R}^{k}_{+}\\\theta_1\leq\cdots\leq \theta_k}}\left\{\sum_{\ell=1}^k \hat\lambda_\ell\theta_\ell~:
 \sum_{\ell=j+1}^k \theta_\ell \leq  \sum_{\ell=j+1}^k \hat\beta_\ell\,,~j=1,\ldots,k-1, ~\sum_{\ell=1}^k \theta_\ell =  \sum_{\ell=1}^k \hat\beta_\ell\right\}.
\end{equation*}
\end{lem}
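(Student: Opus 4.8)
The plan is to split the statement into the easy feasibility claim and the substantive optimality claim. Feasibility of $\hat\theta=\hat\beta$ is immediate: the hypothesis $\hat\beta_1\leq\cdots\leq\hat\beta_k$ gives the ordering constraint $\theta_1\leq\cdots\leq\theta_k$, the hypothesis $\hat\beta_1>0$ gives nonnegativity, the equality constraint holds trivially, and each of the inequality constraints holds with equality. All the work is in showing optimality, and here I would argue directly by a summation-by-parts (Abel summation) manipulation of the linear objective, rather than setting up the dual LP and checking complementary slackness.

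Concretely, for any feasible $\theta$ I would introduce its tail sums $T_m:=\sum_{\ell=m}^k\theta_\ell$ (with the convention $T_{k+1}:=0$), and correspondingly $\hat B_m:=\sum_{\ell=m}^k\hat\beta_\ell$. In this notation the constraints read $T_m\leq\hat B_m$ for $m=2,\ldots,k$ together with the total-sum equality $T_1=\hat B_1$. Writing $\theta_\ell=T_\ell-T_{\ell+1}$ and summing by parts, the objective becomes
\[
\sum_{\ell=1}^k\hat\lambda_\ell\theta_\ell=\hat\lambda_1 T_1+\sum_{\ell=2}^k(\hat\lambda_\ell-\hat\lambda_{\ell-1})\,T_\ell.
\]
Since $\hat\lambda$ is nonincreasing, each coefficient $\hat\lambda_\ell-\hat\lambda_{\ell-1}$ is $\leq 0$; combining this with $T_\ell\leq\hat B_\ell$ (a nonpositive coefficient reverses the inequality) and with $T_1=\hat B_1$ gives
\[
\sum_{\ell=1}^k\hat\lambda_\ell\theta_\ell\ \geq\ \hat\lambda_1\hat B_1+\sum_{\ell=2}^k(\hat\lambda_\ell-\hat\lambda_{\ell-1})\,\hat B_\ell.
\]
Applying the same summation-by-parts identity in reverse to the right-hand side recovers exactly $\sum_{\ell=1}^k\hat\lambda_\ell\hat\beta_\ell$, the objective value at $\hat\theta=\hat\beta$. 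Hence $\hat\beta$ attains the minimum over the feasible region.

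The only real care needed is the bookkeeping: getting the index shifts in the summation by parts correct and tracking the boundary term $T_{k+1}=0$. I would also flag, for transparency, that the lower bound uses only the tail-sum inequalities and the total-sum equality, so the ordering constraint $\theta_1\leq\cdots\leq\theta_k$ and nonnegativity serve solely to place $\hat\beta$ in the feasible region and are not needed for the bound itself. I expect the main (minor) obstacle to be the sign-reversal step, where a nonpositive coefficient $\hat\lambda_\ell-\hat\lambda_{\ell-1}$ together with $T_\ell\leq\hat B_\ell$ yields $(\hat\lambda_\ell-\hat\lambda_{\ell-1})\,T_\ell\geq(\hat\lambda_\ell-\hat\lambda_{\ell-1})\,\hat B_\ell$; this is precisely where the monotonicity of $\hat\lambda$ meshes with the tail-sum constraint structure, and it is worth writing out explicitly so the direction of the inequality is unambiguous.
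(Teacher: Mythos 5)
Your argument is correct, and it is worth noting that the paper itself supplies no proof of this lemma at all: it is imported by citation from \cite[Lemma 11]{Weijun}, where the standard route is an LP-duality/complementary-slackness (or exchange-type) verification that $\hat\beta$ is optimal for this transportation-polytope-like feasible region. Your Abel-summation argument is a genuinely different and more self-contained route: writing the objective in terms of the tail sums $T_m=\sum_{\ell=m}^k\theta_\ell$ turns the problem into bounding $\hat\lambda_1 T_1+\sum_{\ell=2}^k(\hat\lambda_\ell-\hat\lambda_{\ell-1})T_\ell$ from below, and the monotonicity of $\hat\lambda$ makes every coefficient on a constrained tail sum nonpositive, so each constraint $T_\ell\leq \hat B_\ell$ is used in exactly the right direction; the chain of inequalities and the index bookkeeping ($T_{k+1}=0$, constraints indexed $m=2,\ldots,k$, equality at $m=1$) all check out. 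What your approach buys, beyond being elementary, is the observation you flag explicitly: the lower bound uses only the tail-sum inequalities and the total-sum equality, so the constraints $\theta\in\mathbb{R}^k_+$ and $\theta_1\leq\cdots\leq\theta_k$ are needed only to certify feasibility of $\hat\beta$ (and in fact nonnegativity of $\hat\beta$ already follows from $\hat\beta_1>0$); this slightly strengthens the lemma as stated. One small point of hygiene: your argument nowhere needs $\hat\lambda\geq 0$, which is consistent with the lemma's hypotheses, but it is worth being aware that in the paper's application (inside the proof of Theorem \ref{dfactnonlin}) the $\hat\lambda_\ell$ are eigenvalues of a PSD matrix, so nothing is lost either way.
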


\begin{thm}\label{dfactnonlin} (similar to and generalizing \cite[Lemma 1]{Weijun}) For $\Theta\in\mathbb{S}^k$, we have
\begin{align}
&\sup_{W\in  \mathbb{S}^{k,t}_{+}}
\left(
\textstyle  \sum_{\ell=1}^t \log \left(\lambda_{\ell} \left( W \right)  \right)
-
W\bullet \Theta 
\right) 
\label{supdfact}\\
&\qquad=\left\{\begin{array}{ll} 
-~t -\!\!
\textstyle \sum_{\ell=k-t+1}^k \log\left(\lambda_{\ell} \left(\Theta\right)\right),
& \mbox{if}~\Theta \succ 0;\\
+\infty, &\mbox{otherwise.}
\end{array}\right.\nonumber
\end{align}
\end{thm}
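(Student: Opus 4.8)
The plan is to verify the two branches of the formula separately: the $\Theta\not\succ0$ branch by exhibiting a direction along which the objective diverges, and the $\Theta\succ0$ branch by reducing the matrix problem to a separable scalar optimization over eigenvalues, with Lemma \ref{lem:solinner} supplying the key trace inequality.

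First I would dispose of the case $\Theta\not\succ0$. Being symmetric but not positive definite, $\Theta$ has a unit eigenvector $v$ with eigenvalue $\lambda_k(\Theta)\le 0$. For a fixed small $\varepsilon>0$ take $W_c:=\varepsilon I+c\,vv^\top\in\mathbb{S}^{k,t}_+$ (full rank, so rank $\ge t$) and let $c\to+\infty$. Then $W_c\bullet\Theta=\varepsilon\,\Trace(\Theta)+c\,\lambda_k(\Theta)$, so $-W_c\bullet\Theta\ge-\varepsilon\,\Trace(\Theta)$ stays bounded below, whereas $\sum_{\ell=1}^t\log\lambda_\ell(W_c)=\log(\varepsilon+c)+(t-1)\log\varepsilon\to+\infty$. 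Hence the supremum is $+\infty$ (when $\lambda_k(\Theta)<0$ the trace term alone already diverges).

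For $\Theta\succ0$, the objective sees $W$ only through its spectrum in the log term, the coupling residing in $W\bullet\Theta$. So I would fix the decreasing spectrum $w_1\ge\cdots\ge w_k\ge 0$ of $W$ and minimize $W\bullet\Theta$ over all $W$ with that spectrum. Writing $W=U\Diag(w)U^\top$ gives $W\bullet\Theta=\sum_\ell w_\ell\,(U^\top\Theta U)_{\ell\ell}$; by Schur--Horn the diagonal of $U^\top\Theta U$ is majorized by the spectrum of $\Theta$, hence (sorted appropriately) feasible for Lemma \ref{lem:solinner} with $\hat\lambda_\ell:=w_\ell$ and $\hat\beta_\ell:=\lambda_{k-\ell+1}(\Theta)$. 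The lemma evaluates the minimum of the resulting linear form at $\theta=\hat\beta$, yielding $W\bullet\Theta\ge\sum_{\ell=1}^k w_\ell\,\lambda_{k-\ell+1}(\Theta)$, with equality attained by taking $W$ co-diagonal with $\Theta$ in reversed eigenvalue order. This makes the reduction to eigenvalues exact, so the supremum equals the supremum over $w_1\ge\cdots\ge w_k\ge 0$ (rank $\ge t$) of $\sum_{\ell=1}^t\log w_\ell-\sum_{\ell=1}^k w_\ell\,\lambda_{k-\ell+1}(\Theta)$. Since each $\lambda_{k-\ell+1}(\Theta)>0$ and the indices $\ell>t$ carry no logarithmic term, they are driven to $w_{t+1}=\cdots=w_k=0$ (consistent with both the ordering and the rank constraint). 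The residual problem is separable and strictly concave, with unconstrained maximizer $w_\ell=1/\lambda_{k-\ell+1}(\Theta)$ and per-term value $-1-\log\lambda_{k-\ell+1}(\Theta)$; because $\lambda_{k-\ell+1}(\Theta)$ is nondecreasing in $\ell$, these $w_\ell$ are nonincreasing, so the ordering constraint holds automatically and the stationary point is feasible and optimal. Summing gives $-t-\sum_{\ell=1}^t\log\lambda_{k-\ell+1}(\Theta)=-t-\sum_{\ell=k-t+1}^k\log\lambda_\ell(\Theta)$, as claimed, and the supremum is in fact attained.

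The step I expect to be the main obstacle is the second one: rigorously establishing the trace lower bound together with its attainability in the rank-constrained setting, and in particular handling correctly the top-$t$ truncation in the log term, which is precisely the feature absent from the $t=s$ (CMESP) case. This is where Lemma \ref{lem:solinner} does the real work, converting the majorization structure of the admissible diagonals into a clean linear-programming value; once the reduction to eigenvalues is secured, the scalar optimization and the verification that the optimal eigenvalues respect the required ordering are routine.
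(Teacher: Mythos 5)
Your proof is correct and follows essentially the same route as the paper's: a diverging family of feasible $W$ for the non-definite case, and for $\Theta\succ 0$ a reduction to the spectrum of $W$ via rearrangement/Schur majorization and Lemma \ref{lem:solinner} (i.e., the trace inequality $W\bullet\Theta\ge\sum_\ell \lambda_\ell(W)\lambda_{k-\ell+1}(\Theta)$), followed by a separable scalar maximization. The only cosmetic differences are that you solve the final scalar problem by direct termwise maximization (checking the ordering a posteriori) where the paper verifies a guessed maximizer through KKT conditions, and you use $\varepsilon I+c\,vv^\top$ instead of the paper's rank-$t$ divergent family.
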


\begin{proof} 
In what follows, we write $\hat\Theta$ for $\Theta$ to emphasize that it is fixed.
We first  assume that $\hat\Theta\succ 0$. 
 In this case,  it suffices to show that there is an optimal solution $\hat W$ to \eqref{supdfact}, such that 
\begin{equation}\label{identa}
\textstyle \sum_{\ell=1}^t \log \left(\lambda_{\ell} \left(\hat W \right)  \right)
-
\hat W\bullet \hat \Theta  ~=~
-t - 
\textstyle \sum_{\ell=k-t+1}^k \log (\lambda_{\ell} (\hat \Theta)).
\end{equation}
We  consider the spectral decomposition $
 \textstyle
\hat\Theta
=\sum_{\ell=1}^{k} \hat \beta_\ell \hat v_\ell \hat v_\ell^\top\,$,
where we assume that $0<\hat \beta_1\leq\hat \beta_2\leq\cdots\leq \hat \beta_{k}$\,.
We  define 
\begin{equation*} 
\textstyle \hat{W}:=\sum_{\ell=1}^{k} {\hat \lambda}_\ell \hat{v}_\ell \hat{v}_\ell^\top\,,
\end{equation*}
where 
\begin{equation*} 
\hat{\lambda}_\ell:=\left\{
\begin{array}{ll}
       1/\hat{\beta}_\ell\,,&\mbox{ for }1\leq \ell\leq t;\\[5pt]
       0, &\mbox{ for }t+1\leq \ell\leq k.
\end{array}\right.
\end{equation*}
Note  that $\hat \lambda_1\geq\hat \lambda_2\geq\cdots\geq \hat \lambda_k$
and that \eqref{identa} holds for $\hat W$. 

It remains to show that $\hat{W}$ is an optimal solution to \eqref{supdfact}. Let us consider 
the spectral 
decomposition of our matrix variable $W=\sum_{\ell=1}^{k} \lambda_\ell(W) u_\ell(W) u_\ell(W)^\top,
$ 
where we  assume that
$\lambda_1(W)\geq\lambda_2(W)\geq\cdots\geq \lambda_k(W)$. 

In the following we  consider $U(W):=(u_1(W),\ldots,u_k(W))$ and $\lambda(W):=(\lambda_1(W),\ldots,\lambda_k(W))^\top$. So, we have
\begin{align*}
W\bullet \hat\Theta &= (U(W) \Diag(\lambda(W)) U(W)^\top )\bullet \hat\Theta = \Diag(\lambda(W))\bullet (U(W)^\top \hat\Theta U(W)) \\
&= \Diag(\lambda(W))\bullet\Diag(\theta) = \textstyle\sum_{\ell=1}^k \lambda_\ell(W)\theta_\ell\,,
\end{align*}
where $\theta:=\diag(U(W)^\top \hat\Theta U(W))$. Then, defining the vector variable $\lambda\in\mathbb{R}^k$, and the  matrix variable $U\in\mathbb{R}^{k\times k}$, we replace $W$ with $U\Diag(\lambda)U^\top$ in \eqref{supdfact}, reformulating it as
\begin{equation}\label{sup2}
\sup_{\substack{ \lambda\in  \mathbb{R}^{k}_{+}\\\lambda_1\geq\cdots\geq \lambda_k}}\!\!\!
\left\{
 \sum_{\ell=1}^t \log \left(\lambda_{\ell}   \right)
-\!\! \min_{U,\theta\in\mathbb{R}^k_+}\left\{\sum_{\ell=1}^k \lambda_\ell\theta_\ell:\theta=\diag(U^\top \hat\Theta U), U^\top U=I\right\}\!
\right\}.
\end{equation}
Note that we are justified in writing minimum  rather than infimum 
in the inner problem in \eqref{sup2}, because the infimum is over a compact set. 

Finally, considering that  $U$ can be selected as any permutation matrix and $\lambda_1\geq\cdots\geq \lambda_k$ in the inner problem in \eqref{sup2},  we conclude from the Hardy-Littlewood-P\'olya rearrangement inequalities that $\theta_1\leq\cdots\leq \theta_k$ holds at its optimal solution.

Also considering that the eigenvalues of $U^\top \hat\Theta U$ and $\hat\Theta$ are the same, we see by 
the Schur's Theorem: majorization by eigenvalues,
that the following problem is a relaxation of the inner problem in \eqref{sup2}.

\begin{equation*} 
 \min_{\substack{ \theta\in  \mathbb{R}^{k}_{+}\\\theta_1\leq\cdots\leq \theta_k}}\!\!\left\{\sum_{\ell=1}^k \lambda_\ell\theta_\ell:
 \sum_{\ell=j+1}^k \theta_\ell \!\leq  \!\sum_{\ell=j+1}^k \hat\beta_\ell\,,
 1\leq j< k,
 ~\sum_{\ell=1}^k \theta_\ell =  \sum_{\ell=1}^k \hat\beta_\ell\right\}.
\end{equation*}
Now, using Lemma \ref{lem:solinner}, we conclude that $\sum_{\ell=1}^k \lambda_\ell\hat\beta_\ell$ is  a lower bound for the optimal value of the inner minimization problem in \eqref{sup2}. In fact, this lower bound is achieved by selecting  $\theta_\ell=\hat\beta_\ell$\,, for $\ell=1,\ldots,k$, and $U=(\hat v_1,\ldots,\hat v_k)$, which is then, an optimal solution for the inner problem.  Therefore, we may finally rewrite \eqref{sup2} as  
\begin{equation}\label{sup3}
\sup_
{\substack
{ 
\lambda\in  \mathbb{R}^{k}_{+}
\\\lambda_1\geq\cdots\geq \lambda_k}
}
\left\{\textstyle
 \sum_{\ell=1}^t \log \left(\lambda_{\ell}   \right)
- \sum_{\ell=1}^k \lambda_\ell\hat\beta_\ell
\right\}.
\end{equation}
The Lagrangian function associated to \eqref{sup3} is 
\[
\textstyle
\mathcal{L}(\lambda, \mu):=  \sum_{\ell=1}^t \log \left(\lambda_{\ell}   \right)
- \sum_{\ell=1}^k \lambda_\ell\hat\beta_\ell + \sum_{\ell=1}^{k}\mu_\ell\lambda_{\ell}\,,
\]
with $\mbox{ dom }\mathcal{L} = \mathbb{R}_+^{k,t}\times \mathbb{R}^{k}$, where $\mathbb{R}_+^{k,t}$ denotes the set of  vectors in $\mathbb{R}_+^k$ with at least $t$ positive components. We note that the Lagrangian function is concave in $\lambda$, so the supremum in \eqref{sup3} occurs where its gradient  with respect to $\lambda$ is equal to zero. Then, the optimality conditions for \eqref{sup3} are given by
\begin{align*}
&\frac{1}{\lambda_{\ell}} - \hat\beta_\ell +\mu_\ell =0,&&\mbox{for }\ell=1,\ldots,t,\\
&-\hat\beta_\ell +\mu_\ell=0,&&\mbox{for }\ell=t+1,\ldots,k,\\
&\mu_\ell\lambda_\ell=0,&&\mbox{for }\ell=1,\dots,k,\\
&\mu_\ell\geq 0,&&\mbox{for }\ell=1,\ldots,k,\\
&\lambda_1\geq\cdots\geq \lambda_k\geq 0.
\end{align*}
We see that the optimality conditions hold for $\lambda_\ell=\hat\lambda_\ell$\,, for $\ell=1,\ldots,k$, $\mu_\ell=0$, for $\ell=1,\ldots,t$ and $\mu_\ell=\hat\beta_\ell$\,, for $\ell=t+1,\ldots,k$. Therefore, $\hat\lambda_\ell$\,, for $\ell=1,\ldots,k$, is an optimal solution to \eqref{sup3}, and equivalently, to \eqref{sup2}. So, we finally see that $\hat{W}$ is an optimal solution to \eqref{supdfact}.

Next, we assume instead that $\hat\Theta\nsucc 0$. Then, we  consider the spectral 
decomposition $\hat \Theta:=\sum_{\ell=1}^k \hat\lambda_\ell \hat u_\ell \hat u_\ell^\top$
with $\hat \lambda_1\geq\hat \lambda_2\geq\cdots\geq \hat\lambda_p>0\geq \hat \lambda_{p+1}\geq\hat \lambda_{p+2}\geq\cdots\geq \hat\lambda_k$\,, where we have $p<k$. For a given $\alpha\in \mathbb{R}$, we define 
\[
\textstyle
\hat W:= \sum_{\ell=1}^{t-1}(\hat u_\ell \hat u_\ell^\top ) + \alpha\hat u_k \hat u_k^\top\,.
\]
We can verify that, for any $\alpha>0$,  $\hat W$ is a feasible solution for  \eqref{supnonlin} with objective value  
\begin{equation}\label{objw}
\textstyle
\log(\alpha) - \sum_{\ell=1}^{t-1} \hat\lambda_\ell + \alpha|\hat\lambda_k|.
\end{equation}
Clearly, \eqref{objw} goes to $+\infty$, as $\alpha\rightarrow +\infty$, which completes the proof.
\qed 
\end{proof}

\begin{thm}\label{dfactlin} For $(\Theta,\upsilon,\nu,\pi,\tau)\in \mathbb{S}^k\times \mathbb{R}^{n}\times  \mathbb{R}^{n}\times  \mathbb{R}^{m}\times  \mathbb{R}$, we have
\begin{align*}
&\sup_{x}\left(\Theta\bullet F(x) + \upsilon^\top x - \nu^\top x  - \pi^\top Ax - \tau \mathbf{e}^\top x +  \nu^\top \mathbf{e}  + \pi^\top b +\tau s \right)\\
&\quad = \left\{\begin{array}{ll}
\nu^\top \mathbf{e}  + \pi^\top b +\tau s,& \quad \mbox{if} ~ 
\diag(F \Theta F^\top) + \upsilon - \nu  - A^\top \pi - \tau\mathbf{e}=0;\\
+\infty, &\quad \mbox{otherwise.}
\end{array}\right.
\end{align*}
\end{thm}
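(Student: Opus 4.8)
The plan is to exploit the fact that, despite its appearance, the entire objective is an affine function of the \emph{unconstrained} variable $x\in\mathbb{R}^n$; once that is established, the supremum is governed by the usual dichotomy for a linear functional over all of $\mathbb{R}^n$.

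First I would linearize the term $\Theta\bullet F(x)$, which is the only part that is not manifestly linear in $x$. Recalling that $F(x)=F^\top\Diag(x)F$ and that $\Theta$ is symmetric, cyclicity of the trace gives
\[
\Theta\bullet F(x)=\Trace\!\left(\Theta F^\top\Diag(x)F\right)=\Trace\!\left(F\Theta F^\top\Diag(x)\right)=\diag\!\left(F\Theta F^\top\right)^\top x,
\]
where the last equality uses that $\Diag(x)$ is diagonal, so only the diagonal entries of $F\Theta F^\top$ contribute. Thus $\Theta\bullet F(x)$ is linear in $x$, with coefficient vector $\diag(F\Theta F^\top)$.

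Next I would collect all the terms that are linear in $x$. Writing $\pi^\top Ax=(A^\top\pi)^\top x$ and $\tau\mathbf{e}^\top x=(\tau\mathbf{e})^\top x$, the objective becomes
\[
c^\top x+\left(\nu^\top\mathbf{e}+\pi^\top b+\tau s\right),\quad\text{where}\quad c:=\diag(F\Theta F^\top)+\upsilon-\nu-A^\top\pi-\tau\mathbf{e}.
\]
The constant term is precisely the claimed finite value, and $c$ is precisely the vector appearing in the stated feasibility condition.

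Finally, the supremum over all $x\in\mathbb{R}^n$ of an affine function $c^\top x+\text{const}$ is immediate: if $c=0$ the function is identically equal to the constant, so the supremum is $\nu^\top\mathbf{e}+\pi^\top b+\tau s$; if $c\neq 0$, choosing $x=\gamma c$ and letting $\gamma\to+\infty$ drives the value to $+\infty$. This yields exactly the two cases in the statement. There is no genuine obstacle here; the only step requiring care is the trace manipulation that exhibits $\Theta\bullet F(x)$ as linear and correctly identifies its coefficient as $\diag(F\Theta F^\top)$, after which the conclusion is just the elementary behavior of a linear functional on an unbounded domain.
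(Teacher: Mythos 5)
Your proposal is correct and takes essentially the same route as the paper, which simply invokes the fact that a linear function on $\mathbb{R}^n$ is bounded above only when it is identically zero; you have merely filled in the (correct) trace computation showing that $\Theta\bullet F(x)=\diag(F\Theta F^\top)^\top x$, which the paper leaves implicit. No gaps.
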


\begin{proof}
    The result  follows from the fact that a linear function is bounded above 
only when it is identically zero.
\qed\end{proof}

Considering Theorems \ref{dfactnonlin} and \ref{dfactlin}, we see that the Lagrangian dual  of \ref{GFact} is equivalent to 
\[
\begin{array}{ll}
\multicolumn{2}{l}{z_{{\mbox{\tiny DGFact}}}(C,s,t,A,b;F)=} \\	
\qquad \min& -
 \sum_{\ell=k-t+1}^k 
\log\left(\lambda_{\ell} \left(\Theta\right)\right)
+ \nu^\top \mathbf{e}  + \pi^\top b +\tau s - t\\[4pt]
\qquad \mbox{s.t.}
     & \diag(F \Theta F^\top) + \upsilon - \nu  - A^\top \pi - \tau\mathbf{e}=0,\\[4pt]
& \Theta\succ 0, ~\upsilon\geq 0, ~\nu\geq 0, ~\pi\geq 0.
\end{array}
\tag{DGFact}\label{DGFact}
\]

From Lagrangian duality, we conclude that \ref{DGFact} is a convex-optimization problem. Nevertheless,  we note that \ref{GFact} is not generally a convex-optimization problem, so
we will not generally have strong duality between \ref{GFact} and \ref{DGFact}.

Next, we identify two properties for the generalized factorization bound that were similarly established for the factorization bound for  MESP in \cite{ChenFampaLee_Fact}. Specifically, we show that  the
generalized factorization bound for \ref{CGMESP} is invariant under multiplication of $C$ by a scale factor $\gamma$, up to the 
additive constant $-t\log \gamma$, and is also independent of the factorization of $C$. The proofs of the results are similar to the ones presented in \cite[Thm. 2.1 and Cor. 2.3]{ChenFampaLee_Fact} for MESP.

\begin{thm}\label{thm:scale}
For all $\gamma>0$ and factorizations $C=FF^\top$,
we have
\[
z_{{\mbox{\tiny\rm DGFact}}}(C,s,t,A,b;F) =  z_{{\mbox{\tiny\rm DGFact}}}(\gamma C,s,t,A,b; \sqrt{\gamma}F)  -t \log \gamma .
\]
\end{thm}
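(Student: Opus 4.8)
The plan is to work directly with the explicit form of \ref{DGFact} and exhibit a bijection between its feasible solutions for the instance $(C,F)$ and those for the scaled instance $(\gamma C,\sqrt{\gamma}F)$ that shifts the objective by exactly $-t\log\gamma$. First I would observe that the factorization scales cleanly: if $C=FF^\top$, then $\gamma C=(\sqrt{\gamma}F)(\sqrt{\gamma}F)^\top$, so $\sqrt{\gamma}F$ is a valid factorization of $\gamma C$, and for any $x$ we have $(\sqrt{\gamma}F)^\top\Diag(x)(\sqrt{\gamma}F)=\gamma F^\top\Diag(x)F=\gamma F(x)$.

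The key step is to guess the correct change of variables in the dual. Since the matrix $\Theta$ enters the constraint of \ref{DGFact} through $\diag(F\Theta F^\top)$, and the scaled factor contributes $\diag(\sqrt{\gamma}F\,\Theta'\,\sqrt{\gamma}F^\top)=\gamma\diag(F\Theta' F^\top)$, I would set $\Theta':=\tfrac{1}{\gamma}\Theta$ and leave $\upsilon,\nu,\pi,\tau$ unchanged. Then the constraint $\diag((\sqrt{\gamma}F)\Theta'(\sqrt{\gamma}F)^\top)+\upsilon-\nu-A^\top\pi-\tau\mathbf{e}=0$ becomes exactly the original constraint $\diag(F\Theta F^\top)+\upsilon-\nu-A^\top\pi-\tau\mathbf{e}=0$, and $\Theta\succ0\iff\Theta'\succ0$, so feasibility is preserved in both directions under this involution-like map. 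I would then check the objective: the only $\Theta$-dependent term is $-\sum_{\ell=k-t+1}^k\log\lambda_\ell(\Theta)$, and since $\lambda_\ell(\Theta')=\tfrac{1}{\gamma}\lambda_\ell(\Theta)$, each of the $t$ smallest eigenvalues picks up a factor $\tfrac{1}{\gamma}$ inside the log. Thus $-\sum_{\ell=k-t+1}^k\log\lambda_\ell(\Theta)=-\sum_{\ell=k-t+1}^k\log(\gamma\lambda_\ell(\Theta'))=-t\log\gamma-\sum_{\ell=k-t+1}^k\log\lambda_\ell(\Theta')$, while all remaining terms $\nu^\top\mathbf{e}+\pi^\top b+\tau s-t$ are untouched. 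Hence the objective value at $(\Theta,\upsilon,\nu,\pi,\tau)$ for the original instance equals the objective at $(\Theta',\upsilon,\nu,\pi,\tau)$ for the scaled instance plus $-t\log\gamma$.

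Since the map is a bijection between the two feasible sets preserving objective up to the additive constant $-t\log\gamma$, taking the infimum on each side yields $z_{{\mbox{\tiny\rm DGFact}}}(C,s,t,A,b;F)=z_{{\mbox{\tiny\rm DGFact}}}(\gamma C,s,t,A,b;\sqrt{\gamma}F)-t\log\gamma$, as claimed. I expect the main obstacle to be purely bookkeeping: making sure the eigenvalue indexing $\ell=k-t+1,\ldots,k$ (the $t$ smallest eigenvalues of $\Theta$) is handled correctly so that exactly $t$ factors of $\gamma$ are extracted, and confirming that the scaling commutes with $\diag(\cdot)$ and with the positive-definiteness constraint rather than any subtle analytic difficulty. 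As the excerpt notes, this mirrors \cite[Thm. 2.1]{ChenFampaLee_Fact} for MESP, with the role of the full eigenvalue sum there replaced here by the sum over the $t$ relevant eigenvalues.
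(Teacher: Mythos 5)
Your proof is correct: the substitution $\Theta'=\Theta/\gamma$ with $\upsilon,\nu,\pi,\tau$ unchanged is exactly the right bijection between the feasible sets of \ref{DGFact} for $(C,F)$ and $(\gamma C,\sqrt{\gamma}F)$, and the objective bookkeeping (each of the $t$ smallest eigenvalues contributing one $\log\gamma$) is handled correctly. The paper omits this proof, pointing to the analogous scaling argument for the MESP factorization bound in \cite{ChenFampaLee_Fact}, and your argument is essentially that same approach adapted to the $t<s$ setting.
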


\begin{thm}\label{cor:FactDoesntMatter}
$z_{{\mbox{\tiny\rm DGFact}}}(C,s,t,A,b;F)$ does not depend on the chosen $F$.
\end{thm}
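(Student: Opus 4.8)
The plan is to reduce the claim to two elementary ``moves'' relating any two factorizations of $C$, and then to show that $z_{{\mbox{\tiny DGFact}}}$ is invariant under each. Suppose $C=FF^\top=GG^\top$ with $F\in\mathbb{R}^{n\times k}$ and $G\in\mathbb{R}^{n\times k'}$; without loss of generality $k\le k'$. First I would append $k'-k$ zero columns to $F$, producing $\tilde F:=[F\ \mathbf{0}]\in\mathbb{R}^{n\times k'}$ with $\tilde F\tilde F^\top=C$, so that $\tilde F$ and $G$ have the same shape. Then I would invoke the standard fact (e.g.\ via the SVD) that $\tilde F\tilde F^\top=GG^\top$ forces $G=\tilde F Q$ for some orthogonal $Q\in\mathbb{R}^{k'\times k'}$. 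Hence it suffices to establish invariance of the bound under (i) appending a single zero column to $F$ (the general case following by induction), and (ii) right-multiplication of $F$ by an orthogonal matrix.

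Move (ii) is immediate. Writing $G=FQ$ with $Q$ orthogonal, the substitution $\Theta=Q\Theta'Q^\top$ is a bijection between feasible $\Theta'$ for the $G$-problem and feasible $\Theta$ for the $F$-problem that leaves $\upsilon,\nu,\pi,\tau$ untouched: since $G\Theta'G^\top=F\Theta F^\top$, the linear constraint is preserved; $\Theta\succ0\iff\Theta'\succ0$; and $\Theta,\Theta'$ are orthogonally similar, hence share all eigenvalues, so the objective $-\sum_{\ell=k'-t+1}^{k'}\log\lambda_\ell(\cdot)$ is unchanged. Equality of optimal values follows.

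For move (i), set $G=[F\ \mathbf{0}]\in\mathbb{R}^{n\times(k+1)}$ and write a feasible $\Theta'\in\mathbb{S}^{k+1}$ in block form with top-left block $\Theta'_{11}\in\mathbb{S}^k$. Because the appended column is zero, $G\Theta'G^\top=F\Theta'_{11}F^\top$, so only $\Theta'_{11}$ enters the linear constraint (through $\diag$), and $\Theta'_{11}\succ0$ whenever $\Theta'\succ0$. To obtain one inequality, I would lift an optimal $\Theta$ for the $F$-problem to $\Theta'=\begin{pmatrix}\Theta & 0\\ 0 & c\end{pmatrix}$ with a corner entry $c\ge\lambda_1(\Theta)$, so that $c$ is not among the $t$ smallest eigenvalues; then the two objectives agree and the constraints hold with the same $\upsilon,\nu,\pi,\tau$, giving ``$\le$''. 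For the reverse inequality, I would feed the block $\Theta'_{11}$ of any feasible $\Theta'$ into the $F$-problem and compare spectra via Cauchy eigenvalue interlacing: interlacing yields $\lambda_\ell(\Theta'_{11})\ge\lambda_{\ell+1}(\Theta')>0$ for the relevant indices $\ell=k-t+1,\dots,k$, so by monotonicity of $\log$ the $F$-objective at $\Theta'_{11}$ is at most the $G$-objective at $\Theta'$; taking the infimum gives the matching inequality, and the two optimal values coincide.

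The main obstacle is precisely the padding move (i): unlike the orthogonal move, it changes the matrix order and therefore shifts the index set of the ``$t$ smallest eigenvalues'' appearing in the objective, a feature with no counterpart in the $t=s$ (C)MESP analysis of \cite{ChenFampaLee_Fact}. Making this airtight requires the interlacing inequality in exactly the direction relating the bottom-$t$ spectrum of a matrix to that of its order-one-smaller principal submatrix, together with care that the strict-definiteness constraint $\Theta\succ0$ is preserved in both directions of the construction.
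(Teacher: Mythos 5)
Your proposal is correct. Note that the paper itself omits the proof of this theorem, deferring to the analogous CMESP result (\cite[Thm.~2.1 and Cor.~2.3]{ChenFampaLee_Fact}); the argument there relates two factorizations $C=FF^\top=GG^\top$ through a single semi-orthogonal matrix, whereas you factor that relation into two elementary moves (zero-column padding followed by a genuinely orthogonal transformation). Both routes must confront the same technical point, namely that changing the number of columns $k$ shifts the index window $\{k-t+1,\dots,k\}$ of the ``$t$ smallest eigenvalues'' in the \ref{DGFact} objective, and your treatment of it is sound: the lift $\Theta'=\bigl(\begin{smallmatrix}\Theta&0\\0&c\end{smallmatrix}\bigr)$ with $c\geq\lambda_1(\Theta)$ leaves the bottom-$t$ spectrum untouched (using $t\leq r\leq k$), and Cauchy interlacing $\lambda_\ell(\Theta'_{11})\geq\lambda_{\ell+1}(\Theta')>0$ gives exactly the reverse comparison after the reindexing $\ell\mapsto\ell+1$. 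The only cosmetic caveat is that you should phrase the ``$\leq$'' direction with an $\varepsilon$-optimal (rather than optimal) $\Theta$ in case the infimum in \ref{DGFact} is not attained; this changes nothing in the conclusion. Your orthogonal move, via the similarity $\Theta=Q\Theta'Q^\top$, is likewise complete.
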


\cite{LeeLind2020} presents a
\emph{spectral bound}  for \ref{GMESP}, 
$\sum_{\ell=1}^t \log \lambda_\ell(C)$.
Next, we present a relation between the generalized factorization bound  and the spectral bound   for \ref{GMESP}. 
 This result generalizes the corresponding result for MESP (see \cite[Thm. 3.4.18]{FLBook}).

\begin{thm}\label{thm:almost_dominates}
Let $C\in\mathbb{S}^n_{+}$\,, with $r:=\rank(C)$, $0<t\leq r$, and $t\leq s < n$. Then,
for all factorizations $C=F F^\top$, we have
\[
 \textstyle
z_{{\mbox{\tiny\rm DGFact}}}(C,s,t,\cdot,\cdot\,\, ;F)
~-~ \sum_{\ell=1}^t  \log \lambda_\ell(C)~ \leq
~t\log\left(\frac{s}{t}\right).
\]
\end{thm}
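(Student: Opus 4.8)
The plan is to exhibit a single feasible point for the minimization problem \ref{DGFact} whose objective value is at most $\sum_{\ell=1}^t\log\lambda_\ell(C) + t\log(s/t)$; since \ref{DGFact} is a minimization, any feasible point furnishes such an upper bound on $z_{{\mbox{\tiny\rm DGFact}}}$. First I would invoke Theorem \ref{cor:FactDoesntMatter} to reduce to a convenient factorization: because the bound does not depend on the chosen $F$, it suffices to verify the inequality for one factorization $C = FF^\top$ with $F\in\mathbb{R}^{n\times r}$ of full column rank $k:=r$, so that $F^\top F\in\mathbb{S}^r_{++}$ and its eigenvalues are exactly the nonzero eigenvalues $\lambda_1(C)\geq\cdots\geq\lambda_r(C)$ of $C$ (the $A,b$ data being absent here, as indicated by the dots in the statement).

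The central idea is the choice $\Theta:=\tfrac{t}{s}(F^\top F)^{-1}$, which is positive definite as required. Before evaluating it, I would dispose of the remaining dual variables $\upsilon,\nu,\tau$ in closed form. For fixed $\Theta$, writing $d:=\diag(F\Theta F^\top)$, the equality constraint forces $\nu_j-\upsilon_j=d_j-\tau$, so the cost-minimizing choice is $\nu_j=(d_j-\tau)_+$ and $\upsilon_j=(\tau-d_j)_+$, giving $\nu^\top\mathbf{e}=\sum_j(d_j-\tau)_+$. Minimizing $\sum_j(d_j-\tau)_+ + \tau s$ over $\tau$ is exactly the LP-dual of $\max\{d^\top y: \mathbf{e}^\top y=s,~0\leq y\leq\mathbf{e}\}$, so it evaluates to the sum of the $s$ largest entries of $d$. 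Hence the value of \ref{DGFact} at this $\Theta$ (optimized over the linear variables) equals
\[
-\sum_{\ell=r-t+1}^r\log\lambda_\ell(\Theta) + \Big(\textstyle\text{sum of the $s$ largest entries of }\diag(F\Theta F^\top)\Big) - t.
\]

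It then remains to evaluate the two pieces for $\Theta=\tfrac{t}{s}(F^\top F)^{-1}$. For the spectral term, the eigenvalues of $\Theta$ are $\tfrac{t}{s\lambda_i(C)}$, so its $t$ smallest eigenvalues come from the $t$ largest $\lambda_i(C)$, whence $-\sum_{\ell=r-t+1}^r\log\lambda_\ell(\Theta)=t\log(s/t)+\sum_{\ell=1}^t\log\lambda_\ell(C)$. For the diagonal term, $F\Theta F^\top=\tfrac{t}{s}F(F^\top F)^{-1}F^\top=\tfrac{t}{s}P$, where $P$ is the orthogonal projector onto the range of $C$; since every diagonal entry of a projector lies in $[0,1]$, each entry of $\diag(F\Theta F^\top)$ is at most $t/s$, so the sum of its $s$ largest entries is at most $s\cdot(t/s)=t$. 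Combining, the displayed value is at most $t\log(s/t)+\sum_{\ell=1}^t\log\lambda_\ell(C)+t-t$, which gives $z_{{\mbox{\tiny\rm DGFact}}}\leq\sum_{\ell=1}^t\log\lambda_\ell(C)+t\log(s/t)$, i.e.\ the claim.

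I expect the main obstacle to be \emph{finding} the right $\Theta$ together with the reduction of the linear variables: recognizing that the $\upsilon,\nu,\tau$-minimization collapses to ``sum of the $s$ largest diagonal entries'' via LP duality is precisely what makes the constant $-t$ cancel against the projector bound, while the scaling factor $t/s$ in $\Theta$ is exactly what produces the additive slack $t\log(s/t)$. Everything else is routine verification.
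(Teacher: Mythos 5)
Your proof is correct and follows essentially the same route as the paper's: both exhibit a feasible point of \ref{DGFact} with $\Theta$ equal to $\tfrac{t}{s}$ times the inverse of the relevant Gram matrix (the paper works with $k=n$ and so must pad $C^\dagger$ with $\tfrac{1}{\lambda_r(C)}(I-CC^\dagger)$ to get positive definiteness, whereas you take $k=r$ so that $(F^\top F)^{-1}$ exists outright), note that $F\Theta F^\top=\tfrac{t}{s}P$ for an orthogonal projector $P$ so its diagonal is at most $\tfrac{t}{s}\mathbf{e}$, and read off the objective value as the spectral bound plus $t\log(s/t)$. The only cosmetic difference is that the paper writes down the optimal linear dual variables explicitly ($\hat\tau:=\tfrac{t}{s}$, $\hat\nu:=0$, $\hat\upsilon:=\tfrac{t}{s}\mathbf{e}-\diag(F\hat\Theta F^\top)$) where you derive them by optimizing $\upsilon,\nu,\tau$ out via LP duality, arriving at the same computation.
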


\begin{proof}
Let $C=\sum_{\ell=1}^r \lambda_\ell(C) u_\ell u_\ell^\top$ be a spectral
decomposition of $C$.
By Theorem \ref{cor:FactDoesntMatter},
it suffices to take $F$ to be the 
symmetric matrix $\sum_{\ell=1}^r \sqrt{\lambda_\ell(C) } u_\ell u_\ell^\top$.
We consider the solution for \ref{DGFact} given by:   $\hat{\Theta}:=\frac{t}{s}\left(C^{\dagger} + \frac{1}{\lambda_r(C)}\left(I-CC^{\dagger}\right)\right)$, where $C^{\dagger}:=\sum_{\ell=1}^r \frac{1}{\lambda_\ell(C)}  u_\ell u_\ell^\top$ is the  Moore-Penrose pseudoinverse  of $C$,  $\hat{\upsilon}:=\frac{t}{s}\mathbf{e} - \diag(F\hat\Theta F^\top)$, 
$\hat{\nu}:=0$,  $\hat{\pi}:=0$, and $\hat{\tau}:=\frac{t}{s}$.
We can verify that the $r$ least eigenvalues of $\hat\Theta$ are $\frac{t}{s}\cdot\frac{1}{\lambda_1(C)},\frac{t}{s}\cdot\frac{1}{\lambda_2(C)},\ldots,\frac{t}{s}\cdot\frac{1}{\lambda_r(C)}$ and the $n-r$ greatest eigenvalues are all equal to  $\frac{t}{s}\cdot\frac{1}{\lambda_r(C)}$. 
Therefore, $\hat\Theta$ is positive definite. 

The equality constraint of \ref{DGFact} is clearly satisfied at this solution. Additionally, we can verify that $F\hat\Theta F^\top = \frac{t}{s}\sum_{\ell=1}^r u_\ell u_\ell^\top$\,. As  
$\sum_{\ell=1}^r u_\ell u_\ell^\top \preceq I$, we conclude that  $\diag(F\hat\Theta F^\top)\leq \frac{t}{s}\mathbf{e}$. Therefore, $\hat{\upsilon}\geq 0$, and the solution constructed is a feasible solution to \ref{DGFact}. Finally, we can see that the   objective value  
of this solution is equal to the spectral bound added to  $t\log(s/t)$. The result then follows.
\qed\end{proof}

\begin{rem}
Considering Theorem \ref{thm:almost_dominates},
for $t=s-\kappa$, with constant integer $\kappa\geq 0$,
$\lim_{s\rightarrow \infty} t\log(s/t) =\kappa$. 
Therefore, in this limiting regime, the generalized factorization bound 
is no more than an additive constant worse than the 
spectral bound. 
\end{rem}

Considering Theorem \ref{thm:almost_dominates}, 
we will see next that key quantities are discrete concave in $t$,
in such a way that we get a concave upper bound that only depends on $t$ and $s$
for the difference of two discrete concave upper bounds (which depend on  $C$ as well). 
  We note that these next two theorems have no counterparts for CMESP nor CD-Opt, where $t$ cannot vary
independently of~$s$.

\begin{thm}\label{thm:concavity}
\phantom{.}
\begin{enumerate}
\item[\rm(a)] $t\log\left(\frac{s}{t}\right)$ is (strictly) concave in $t$ on 
$\mathbb{R}_{++}$\,;    \item[\rm(b)] $ \sum_{\ell=1}^t  \log \lambda_\ell(C)~$ is discrete concave in $t$ on $\{1,2,\ldots,r\}$;
\item[\rm(c)] $z_{{\mbox{\tiny\rm DGFact}}}(C,s,t,A,b\,\, ;F)$  is discrete concave in $t$ on $\{1,2,\ldots,k\}$.
\end{enumerate}
\end{thm}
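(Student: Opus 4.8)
The plan is to handle the three parts in increasing order of difficulty, with (a) and (b) being one-line computations and (c) carrying the real content. For (a), I would write $t\log(s/t)=t\log s - t\log t$ and differentiate twice in $t$ to get second derivative $-1/t$, which is strictly negative on $\mathbb{R}_{++}$\thinspace; hence strict concavity. For (b), I would examine the first differences of $h(t):=\sum_{\ell=1}^t\log\lambda_\ell(C)$. Since $h(t)-h(t-1)=\log\lambda_t(C)$ and the eigenvalues satisfy $\lambda_1(C)\geq\cdots\geq\lambda_r(C)>0$ (positivity on $\{1,\ldots,r\}$ because $r=\rank(C)$, so the logarithms are finite), these first differences are non-increasing in $t$, which is exactly discrete concavity.

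For (c), the key structural observation is that the feasible region of \ref{DGFact}, namely $\{(\Theta,\upsilon,\nu,\pi,\tau): \diag(F\Theta F^\top)+\upsilon-\nu-A^\top\pi-\tau\mathbf{e}=0,~\Theta\succ0,~\upsilon\geq0,~\nu\geq0,~\pi\geq0\}$, does \emph{not} depend on $t$; only the objective does. Writing $\phi(\Theta,\nu,\pi,\tau;t)$ for the \ref{DGFact} objective, I would decompose it as $-\sum_{\ell=k-t+1}^k\log\lambda_\ell(\Theta)$ plus the affine term $-t$ plus terms $\nu^\top\mathbf{e}+\pi^\top b+\tau s$ that are constant in $t$. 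For fixed $\Theta\succ0$, the first difference of $\psi(t):=\sum_{\ell=k-t+1}^k\log\lambda_\ell(\Theta)$ (the sum of the $t$ smallest log-eigenvalues) is $\psi(t)-\psi(t-1)=\log\lambda_{k-t+1}(\Theta)$, and as $t$ increases the index $k-t+1$ decreases, so $\lambda_{k-t+1}(\Theta)$ increases and $\log\lambda_{k-t+1}(\Theta)$ is non-decreasing in $t$. Thus $\psi$ is discrete convex, $-\psi$ is discrete concave, and since $-t$ is affine and the remaining terms constant, $\phi(\cdot\,;t)$ is discrete concave in $t$ for every fixed feasible point.

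I would then invoke the fact that a pointwise infimum of a family of concave functions is concave, applied in the discrete setting over the common, $t$-independent feasible set. Concretely, for interior $t$ one has $z_{{\mbox{\tiny\rm DGFact}}}(t)=\inf_u\phi(u;t)\geq \inf_u\tfrac12\big(\phi(u;t-1)+\phi(u;t+1)\big)\geq \tfrac12\big(\inf_u\phi(u;t-1)+\inf_u\phi(u;t+1)\big)=\tfrac12\big(z_{{\mbox{\tiny\rm DGFact}}}(t-1)+z_{{\mbox{\tiny\rm DGFact}}}(t+1)\big)$, where the first inequality is the pointwise discrete concavity just established and the second is superadditivity of the infimum over a fixed set (together with extraction of the positive constant $\tfrac12$). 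This three-point inequality is precisely discrete concavity of $z_{{\mbox{\tiny\rm DGFact}}}$ in $t$, so no attainment of the infimum is needed.

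The main obstacle is conceptual rather than computational. The tempting averaging argument---take optimal solutions for $t-1$ and $t+1$, average them (the feasible set is convex), and evaluate at $t$---goes the \emph{wrong way}: averaging feasible points of a minimization yields only an upper bound on $z_{{\mbox{\tiny\rm DGFact}}}(t)$ and would prove convexity, not concavity. The correct and essential viewpoint is that $z_{{\mbox{\tiny\rm DGFact}}}$ is an infimum of functions each concave in the parameter $t$; making this rigorous requires first verifying that the feasible region is genuinely independent of $t$ and then checking the sign of the second difference of the sum of the $t$ smallest log-eigenvalues of a fixed $\Theta\succ0$, which is where the monotonicity of the spectrum is used.
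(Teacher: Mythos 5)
Your proposal is correct and follows essentially the same route as the paper: second derivative $-1/t$ for (a), monotonicity of the first differences $\log\lambda_t(C)$ for (b), and for (c) the observation that $z_{{\mbox{\tiny\rm DGFact}}}$ is a pointwise minimum, over the $t$-independent feasible region of \ref{DGFact}, of functions whose discrete concavity in $t$ reduces to the ordering $\lambda_{k-t}(\Theta)\geq\lambda_{k-t+1}(\Theta)$ of the spectrum. Your explicit three-point inequality merely makes rigorous the paper's one-line appeal to ``pointwise minimum of concave functions is concave,'' and your warning about the wrong-way averaging argument is a sound (if unneeded) precaution.
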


\begin{proof}
For (a), we see that the second derivative of the function is $-1/t$, which is negative on $\mathbb{R}_{++}$.
For (b), it is easy to check that discrete concavity is equivalent to $\lambda_t(C)\geq \lambda_{t+1}(C)$\,, for all integers $t$ satisfying 
$1\leq t<r$,
which we obviously have. For (c), first we observe that we can view 
$z_{{\mbox{\tiny DGFact}}}(C,s,t,A,b\,\, ;F)$ as the point-wise minimum 
of 
$- \sum_{\ell=k-t+1}^k 
\log\left(\lambda_{\ell} \left(\Theta\right)\right)
+ \nu^\top \mathbf{e}  + \pi^\top b +\tau s - t$
over the points in the convex feasible region of \ref{DGFact}.
So it suffices to demonstrate that the function 
$- \sum_{\ell=k-t+1}^k 
\log\left(\lambda_{\ell} \left(\Theta\right)\right)
+ \nu^\top \mathbf{e}  + \pi^\top b +\tau s - t$ is discrete concave in $t$.
It is easy to check that this is equivalent to $\lambda_{k-t+1}(C)\geq \lambda_{k-t+2}(C)$\,, for all integers $t$ satisfying $1<t \leq k$,
which we obviously have.
\qed
\end{proof}

Theorem \ref{thm:concavity}, part (c) is very interesting, in connection with the
motivating application to PCA. Using convexity, we can compute upper bounds 
on the value of changing the number $t$ of dominant principal components considered, 
without having to actually solve  further instances of \ref{DGFact} (of course, solving those would give better upper bounds). 

More generally than the spectral bound for \ref{GMESP}, \cite{LeeLind2020} defines the \emph{(Lagrangian) spectral bound} for \ref{CGMESP} as
\[
v^* := \min_{\pi\in \mathbb{R}^m_+} v(\pi) := 
\sum_{\ell=1}^t\log \lambda_\ell\left( D_\pi C D_\pi \right) + \pi^\top b ~- 
\min_{K\subset N,|K|=s-t}
\sum_{j\in K}\sum_{i\in M} \pi_i a_{ij}\,,
\]
where $D_\pi\in \mathbb{S}^n_{++}$ is  the diagonal matrix defined by
\[
D_\pi[\ell,j]:= 
\left\{
  \begin{array}{ll}
    \exp\{-\frac{1}{2}\sum_{i=1}^m \pi_i a_{ij}\},& \hbox{ for $\ell=j$;} \\
    0,&\hbox{ for $\ell\not=j$.}
  \end{array}
\right.
\]

Extending Theorem \ref{thm:concavity}, part (b), we have the following result. 
\begin{thm}\label{thm:disc}
\phantom{.}
   $v^*$  is discrete concave in $t$ on $\mathbb{Z}_{++}$\,.
\end{thm}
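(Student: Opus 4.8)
The plan is to fix the multiplier $\pi\in\mathbb{R}^m_+$ first, write $v(\pi;t)$ to display the dependence on $t$, establish that $v(\pi;t)$ is discrete concave in $t$ for each fixed $\pi$, and then argue that passing to the pointwise infimum over $\pi$ preserves discrete concavity. This mirrors the spirit of Theorem~\ref{thm:concavity}(b), but with the extra wrinkle that $t$ now enters $v(\pi;t)$ in two distinct places: through the spectral sum and through the combinatorial minimum, whose cardinality is $s-t$.

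For fixed $\pi$ I would split $v(\pi;t)$ into three pieces. The term $\pi^\top b$ is constant in $t$ and therefore irrelevant. For the spectral term $\sum_{\ell=1}^t\log\lambda_\ell(D_\pi C D_\pi)$, the first difference in $t$ is $\log\lambda_{t+1}(D_\pi C D_\pi)$, which is non-increasing in $t$ because $\lambda_t(D_\pi C D_\pi)\ge\lambda_{t+1}(D_\pi C D_\pi)$; this is precisely the computation used for Theorem~\ref{thm:concavity}(b), applied to $D_\pi C D_\pi$ in place of $C$, with $t$ restricted to the range where these logarithms are finite, i.e.\ $t\le\rank(D_\pi C D_\pi)=\rank(C)$ (the diagonal factor $D_\pi\in\mathbb{S}^n_{++}$ preserves rank). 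For the remaining term, write $c_j:=\sum_{i\in M}\pi_i a_{ij}$ and sort these weights as $c_{(1)}\le c_{(2)}\le\cdots\le c_{(n)}$; then the minimum over size-$(s-t)$ subsets collapses to the sum of the $s-t$ smallest weights, $\min_{|K|=s-t}\sum_{j\in K}c_j=\sum_{j=1}^{s-t}c_{(j)}$. Its second difference in $t$ equals $c_{(s-t+1)}-c_{(s-t)}\ge 0$, so this quantity is discrete \emph{convex} in $t$ and hence its negation, which is the term appearing in $v(\pi;t)$, is discrete concave. Summing the three pieces shows $v(\pi;t)$ is discrete concave in $t$ on the range $1\le t\le\min\{r,s\}$ where it is finite.

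The final step is to move the infimum inside, which I would do by verifying the second-difference inequality directly. For every $\pi$ and every interior $t$, discrete concavity of $v(\pi;\cdot)$ gives $v(\pi;t)\ge\tfrac12\big(v(\pi;t-1)+v(\pi;t+1)\big)$, and since $v(\pi;t\pm1)\ge v^*(t\pm1)$ by definition of the infimum, we obtain $v(\pi;t)\ge\tfrac12\big(v^*(t-1)+v^*(t+1)\big)$ for every $\pi$; taking the infimum over $\pi$ on the left yields $v^*(t)\ge\tfrac12\big(v^*(t-1)+v^*(t+1)\big)$, which is exactly discrete concavity of $v^*$. The step most worth flagging is this last one: a pointwise \emph{minimum} ordinarily destroys concavity, and it is only because we are taking an infimum of concave functions (equivalently, intersecting their hypographs) that concavity survives. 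The secondary point needing care is the combinatorial term, where one must first recognize that the minimum over size-$(s-t)$ subsets equals the sum of the smallest $s-t$ weights before the discrete-convexity computation becomes transparent.
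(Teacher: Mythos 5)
Your proposal is correct, and its overall skeleton coincides with the paper's: fix $\pi$, show that $v(\pi;\cdot)$ decomposes into a constant term, a discrete concave spectral term, and the negation of a discrete convex combinatorial term, and then pass to the pointwise infimum over $\pi$. The one place where you take a genuinely different route is the combinatorial term $\min_{|K|=s-t}\sum_{j\in K}c_j$: the paper writes it as an integer linear program, invokes total unimodularity to drop integrality, and then appeals to convexity of the LP optimal value in the right-hand side (which is affine in $t$), whereas you observe directly that the minimum equals the sum of the $s-t$ smallest weights $\sum_{j=1}^{s-t}c_{(j)}$ and compute the second difference $c_{(s-t+1)}-c_{(s-t)}\geq 0$ by hand. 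Your argument is more elementary and self-contained; the paper's is more robust in the sense that it would survive replacing the cardinality constraint by a more general polyhedral structure where no closed-form sorted-sum expression is available. You are also more explicit than the paper on two points that it leaves implicit: the verification that a pointwise infimum of discrete concave functions is discrete concave (via the midpoint inequality $v(\pi;t)\geq\tfrac12(v^*(t-1)+v^*(t+1))$ followed by an infimum over $\pi$), and the restriction of $t$ to the range where the logarithms are finite. Neither of these affects correctness; both are welcome clarifications.
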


\begin{proof}
Because $v^*$ is a point-wise minimum, we need only 
demonstrate that 
$v(\pi)$ is discrete concave in $t$, for each fixed $\pi \in \mathbb{R}^m_+$\,.
Using the same reasoning as for 
the proof of  Theorem \ref{thm:almost_dominates}, part (b),
we can see that 
$\sum_{\ell=1}^t\log \lambda_\ell\left( D_\pi C D_\pi \right)$
is discrete concave in $t$. So, it is enough to
demonstrate that 
\begin{align*}
&    \textstyle
\min\left\{\sum_{j\in K}\sum_{i\in M} \pi_i a_{ij} ~:~ K\subset N,|K|=s-t\right\}\\
&\qquad = \textstyle\min
\left\{ 
\sum_{j\in N} \left(\sum_{i\in M} \pi_i a_{ij}\right) x_j ~:~ \mathbf{e}^\top x = s-t,\, 0\leq x \leq \mathbf{e},~ x\in\mathbb{Z}^n
\right\}.
\end{align*}
is discrete convex in $t$. 
Because of total unimodularity, we can drop the integrality requirement,
and we see that the last expression is equivalently a minimization linear-optimization problem, which is 
well-known to be convex in the right-hand side (see \cite[Chap. 6]{LeeLP}, for example), which is affine in $t$. The result follows. 
\qed
\end{proof}

In the next theorem, we present for \ref{CGMESP}, a key result to enhance the application of B\&B algorithms to discrete optimization problems with convex relaxations. The principle described in the theorem is called \emph{variable fixing}, and has been successfully applied to MESP (see \cite{AFLW_IPCO,AFLW_Using}). The similar proof of the theorem for MESP can be found in \cite[Theorem 3.3.9]{FLBook}.

\begin{thm} \label{thm:fixFact}
 Let
\begin{itemize}
    \item\!$\mbox{LB}$ be the objective-function value of a feasible solution for \ref{CGMESP},
    \item\!$(\!\hat{\Theta},\hat{\upsilon},\hat{\nu},\hat{\pi},\hat{\tau}\!)$ be a feasible solution for \ref{DGFact} with objective-function value~$\hat{\zeta}$.
\end{itemize}
Then, for every optimal solution $x^*$ 
for \ref{CGMESP}, we have:
\[
\begin{array}{ll}
x_j^*=0, ~ \forall ~ j\in N \mbox{ such that } \hat{\zeta}-\mbox{LB} < \hat{\upsilon}_j\thinspace,\\
x_j^*=1, ~ \forall ~ j\in N \mbox{ such that } \hat{\zeta}-\mbox{LB} < \hat{\nu}_j\thinspace.\\
\end{array}
\]
\end{thm}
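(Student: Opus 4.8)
The plan is to use the fact that the dual value $\hat\zeta=\mathcal{L}^*(\hat\Theta,\hat\upsilon,\hat\nu,\hat\pi,\hat\tau)$ is itself a \emph{supremum} of the Lagrangian $\mathcal{L}$ over all admissible $(W,x)$; hence evaluating $\mathcal{L}$ at one well-chosen point bounds $\hat\zeta$ from below. Combining such a bound with the optimality inequality $f(x^*)\ge\mbox{LB}$ and the integrality $x^*\in\{0,1\}^n$ will force the two fixings. First I would fix any optimal $x^*$ of \ref{CGMESP}, set $S:=S(x^*)$, and record that since $x^*$ has finite objective we have $\rank(C[S,S])\ge t$; then, exactly as in the proof of Theorem \ref{thm:Fact_bound}, $F(x^*)=F[S,\cdot]^\top F[S,\cdot]$ has the same $t$ largest (positive) eigenvalues as $C[S,S]$, so $F(x^*)\in\mathbb{S}^{k,t}_{+}$ and the pair $(W,x)=(F(x^*),x^*)$ is admissible in the supremum defining $\mathcal{L}^*$.

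Evaluating the dual function at this point gives
\[
\hat\zeta\;\ge\;\mathcal{L}(F(x^*),x^*,\hat\Theta,\hat\upsilon,\hat\nu,\hat\pi,\hat\tau).
\]
I would then simplify the right-hand side using primal feasibility of $x^*$: the coupling term $\hat\Theta\bullet(F(x^*)-F(x^*))$ is $0$, the term $\hat\tau(s-\mathbf{e}^\top x^*)$ is $0$, and both $\hat\pi^\top(b-Ax^*)$ and $\hat\nu^\top(\mathbf{e}-x^*)$ are nonnegative (from $\hat\pi,\hat\nu\ge0$, $Ax^*\le b$, $x^*\le\mathbf{e}$); also $\sum_{\ell=1}^t\log(\lambda_\ell(F(x^*)))=f(x^*)$. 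Discarding the nonnegative $\hat\pi$- and $\hat\nu$-terms and keeping from $\hat\upsilon^\top x^*=\sum_i\hat\upsilon_i x_i^*$ only the single (nonnegative) summand indexed by $j$, I obtain
\[
\hat\zeta\;\ge\;f(x^*)+\hat\upsilon_j x_j^*\;\ge\;\mbox{LB}+\hat\upsilon_j x_j^*,
\]
so that $\hat\upsilon_j x_j^*\le\hat\zeta-\mbox{LB}$.

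The conclusion is then immediate from integrality: if $\hat\zeta-\mbox{LB}<\hat\upsilon_j$ while $x_j^*=1$, the last inequality reads $\hat\upsilon_j\le\hat\zeta-\mbox{LB}<\hat\upsilon_j$, a contradiction, so $x_j^*=0$. The second statement is symmetric --- keeping instead the single $j$-summand of $\hat\nu^\top(\mathbf{e}-x^*)$ yields $\hat\nu_j(1-x_j^*)\le\hat\zeta-\mbox{LB}$, and $\hat\zeta-\mbox{LB}<\hat\nu_j$ excludes $x_j^*=0$, forcing $x_j^*=1$.

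I expect the only genuinely delicate step to be the domain check $F(x^*)\in\mathbb{S}^{k,t}_{+}$ that legitimizes using $(F(x^*),x^*)$ as a test point in the supremum; the remainder is sign bookkeeping secured by dual feasibility $\hat\upsilon,\hat\nu,\hat\pi\ge0$ together with feasibility of $x^*$. The argument parallels the MESP case of \cite[Theorem 3.3.9]{FLBook}, the extension to \ref{CGMESP} requiring only this rank check and the observation that the individual coordinates of $\hat\upsilon^\top x^*$ and of $\hat\nu^\top(\mathbf{e}-x^*)$ are nonnegative.
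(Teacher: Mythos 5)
Your proof is correct and follows essentially the same weak-duality argument that the paper invokes by reference to the MESP case (\cite[Theorem 3.3.9]{FLBook}): evaluate the Lagrangian at the admissible test point $(F(x^*),x^*)$, use $\hat\zeta=\mathcal{L}^*(\hat\Theta,\hat\upsilon,\hat\nu,\hat\pi,\hat\tau)$ together with the sign conditions from primal and dual feasibility to get $\hat\upsilon_j x_j^*\le\hat\zeta-\mbox{LB}$ and $\hat\nu_j(1-x_j^*)\le\hat\zeta-\mbox{LB}$, and conclude by integrality. You correctly identify the one point that genuinely needs checking in the generalization to \ref{CGMESP}, namely that $F(x^*)\in\mathbb{S}^{k,t}_{+}$ (which holds because finiteness of $\mbox{LB}$ forces the optimal value, hence $\rank(C[S(x^*),S(x^*)])\ge t$, the case $\mbox{LB}=-\infty$ being vacuous).
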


 
\section{Duality for \ref{DGFact}}\label{sec:DDFact}

Although it is possible to directly solve \ref{DGFact}
to calculate the generalized factorization bound, it is computationally more attractive  to
work with the Lagrangian dual of 
\ref{DGFact}. In the following, we construct this dual formulation.

Consider the Lagrangian function corresponding to \ref{DGFact}, after eliminating the slack variable $\upsilon$,
\begin{align*}
&\mathcal{L}(\Theta,\nu,\pi,\tau,x,y,w):=-
\textstyle 
\sum_{\ell=k-t+1}^k \log\left(\lambda_{\ell} \left(\Theta\right)\right)
+ \nu^\top \mathbf{e}  + \pi^\top b +\tau s - t\\
     &\qquad\qquad\qquad\qquad\qquad +x^\top\left(\diag(F \Theta F^\top)  - \nu  - A^\top \pi - \tau\mathbf{e}\right) -y^\top\nu - w^\top\pi,
\end{align*}
with $\mbox{ dom }\mathcal{L}=  \mathbb{S}_{++}^k\times  \mathbb{R}^{n}\times  \mathbb{R}^{m}\times  \mathbb{R} \times \mathbb{R}^{n}\times \mathbb{R}^{n}\times  \mathbb{R}^{m}$.

The corresponding dual function is
\begin{equation*} 
\mathcal{L}^*(x,y,w):=\inf_{\Theta\in~\mathbb{S}^{k}_{++},\nu,\pi,\tau}\mathcal{L}(\Theta,\nu,\pi,\tau,x,y,w),
\end{equation*}
and the  Lagrangian dual problem of \ref{DGFact} is
\[
 z_{{\mbox{\tiny DDGFact}}}(C,s,t,A,b;F):=\max\{\mathcal{L}^*(x,y,z)~:~x\geq 0,~y\geq 0,~w\geq 0\}.
\]
We note that \ref{DGFact} has a strictly-feasible solution (e.g., given by $(\hat \Theta :=I,\,\hat \upsilon:=0,\,\hat \nu:=\diag( F F^\top)=\diag(C),\,\hat \pi:=0,\,\hat \tau:=0)$). Then, Slater's condition holds for \ref{DGFact} and   we are justified 
to use maximum in the formulation of the Lagrangian dual problem, rather than supremum, as the optimal value of the Lagrangian dual problem is attained.

We have  that 
\begin{align}
&\inf_{\Theta\in~\mathbb{S}^{k}_{++},\nu,\pi,\tau}\mathcal{L}(\Theta,\nu,\pi,\tau,x,y,w) =  \nonumber\\
&\quad\qquad \inf_{\Theta\in~\mathbb{S}^{k}_{++}} \left\{- 
\textstyle 
\sum_{\ell=k-t+1}^k \log\left(\lambda_{\ell} \left(\Theta\right)\right)
 +x^\top \diag(F \Theta F^\top)- t\right\}  \label{infnonlin} \\
&\qquad\qquad\quad + 
 \inf_{\nu,\pi,\tau}
 \left\{\nu^\top (\mathbf{e} - x - y)  + \pi^\top( b - Ax - w) +\tau (s - \mathbf{e}^\top x)\right\}.\label{inflin}
\end{align}

Next, we discuss the infima in \eqref{infnonlin} and \eqref{inflin}, for which
 the following lemma brings a fundamental result.

\begin{lem}\label{Ni13}(see \cite[Lemma 13]{Nikolov})
 Let $\lambda\in\mathbb{R}_+^k$ with $\lambda_1\geq \lambda_2\geq \cdots\geq \lambda_k$ and let $0<t\leq k$. There exists a unique integer $\iota$, with $0\leq \iota< t$, such that
 \begin{equation}\label{reslemiota}
 \lambda_{\iota}>\frac{1}{t-\iota}
 \textstyle 
 \sum_{\ell=\iota+1}^k \lambda_{\ell}\geq \lambda_{\iota +1}\,,
 \end{equation}
 with the convention $\lambda_0=+\infty$.
\end{lem}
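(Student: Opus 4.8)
The plan is to reduce everything to a single monotone threshold condition on the ``running averages'' of the tail of the spectrum. Define, for each integer $j$ with $0\le j\le t-1$, the average $A_j:=\frac{1}{t-j}\sum_{\ell=j+1}^{k}\lambda_\ell$, and let $P(j)$ denote the strict inequality $\lambda_j>A_j$. With the convention $\lambda_0=+\infty$, the predicate $P(0)$ holds automatically, and the two inequalities asserted in the lemma are precisely $P(\iota)$ together with $A_\iota\ge\lambda_{\iota+1}$. First I would record the elementary arithmetic equivalence that, for $j<t-1$, the inequality $A_j\ge\lambda_{j+1}$ is the same as $A_{j+1}\ge\lambda_{j+1}$ (clear the denominators and peel off the $\lambda_{j+1}$ term); that is, the second inequality of the lemma at index $\iota$ is exactly the \emph{negation} of $P(\iota+1)$. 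For the boundary case $\iota=t-1$ the second inequality reads $\sum_{\ell=t}^k\lambda_\ell\ge\lambda_t$, which holds trivially by nonnegativity.

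The crux is then a monotonicity statement: \emph{if $P(j)$ fails, so does $P(j+1)$}. I would prove this directly. Failure of $P(j)$ means $(t-j)\lambda_j\le\sum_{\ell=j+1}^k\lambda_\ell$; since the $\lambda$'s are sorted in nonincreasing order we have $\lambda_{j+1}\le\lambda_j$, so $(t-j)\lambda_{j+1}\le(t-j)\lambda_j\le\lambda_{j+1}+\sum_{\ell=j+2}^k\lambda_\ell$, and subtracting $\lambda_{j+1}$ and dividing by $t-j-1>0$ yields $\lambda_{j+1}\le A_{j+1}$, i.e. $P(j+1)$ fails. This is the only place the sorted order is used, and it is the essential ingredient; the rest is bookkeeping. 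Consequently the set $\{j\in\{0,\dots,t-1\}:P(j)\text{ holds}\}$ is an initial segment of $\{0,\dots,t-1\}$, nonempty because it contains $0$.

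Finally I would let $\iota$ be the largest index in that initial segment. Existence follows: $P(\iota)$ holds by construction, and the second inequality holds because either $\iota=t-1$ (the trivial boundary case) or $P(\iota+1)$ fails, which by the arithmetic equivalence of the first paragraph is exactly $A_\iota\ge\lambda_{\iota+1}$. Uniqueness follows from the same two facts run in reverse: any $\iota'$ meeting the lemma's two conditions has $P(\iota')$ true, forcing $\iota'\le\iota$ since the ``true'' set is an initial segment ending at $\iota$; and its second inequality forces $P(\iota'+1)$ to fail (or $\iota'=t-1$), forcing $\iota'\ge\iota$. Hence $\iota'=\iota$. The main obstacle is getting the monotonicity of $P$ right together with the off-by-one matching between ``the second inequality at $\iota$'' and ``$P$ fails at $\iota+1$''; once those two are pinned down, both existence and uniqueness are immediate.
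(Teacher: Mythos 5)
Your proof is correct. Note that the paper itself gives no proof of this lemma --- it is quoted verbatim from \cite[Lemma 13]{Nikolov} --- so there is no in-paper argument to compare against; what you have written is a valid self-contained derivation. The three ingredients all check out: (i) the arithmetic equivalence, for $j<t-1$, between $A_j\ge\lambda_{j+1}$ and $A_{j+1}\ge\lambda_{j+1}$ (clear the factor $t-j$, peel off $\lambda_{j+1}$, divide by $t-j-1>0$); (ii) the monotonicity ``$P(j)$ fails $\Rightarrow P(j+1)$ fails,'' which correctly uses the sorted order $\lambda_{j+1}\le\lambda_j$ and the positivity of $t-j-1$; and (iii) the boundary case $\iota=t-1$, where the second inequality reduces to $\sum_{\ell=t}^k\lambda_\ell\ge\lambda_t$ and needs the nonnegativity hypothesis $\lambda\in\mathbb{R}_+^k$ (this is the one place nonnegativity enters, and you invoke it). Taking $\iota$ to be the last index of the initial segment on which $P$ holds then gives existence, and the two implications run in reverse give uniqueness. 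This is essentially the standard argument for this statement, and I see no gap.
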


Suppose that  $\lambda\in\mathbb{R}^k_+$\,, and assume that 
$\lambda_1\geq\lambda_2\geq\cdots\geq\lambda_k$\,. Given integer $t$ with $0<t\leq k$,
let $\iota$ be the unique integer defined by Lemma \ref{Ni13}. We define
\begin{equation}\label{def:phi}
\phi_t(\lambda):=
\textstyle
\sum_{\ell=1}^{\iota} \log\left(\lambda_\ell\right) + (t - \iota)\log
\left(\frac{1}{t-{\iota}} 
\textstyle
\sum_{\ell=\iota+1}^{k}
\lambda_\ell\right).
\end{equation}
Also, for $X\in\mathbb{S}_{+}^k$\,, we define 
$\Gamma_t(X):= \phi_t(\lambda(X))$.

Considering the definition of $\Gamma_t$, we analytically characterize the infimum in \eqref{infnonlin} in Theorem \ref{ddfactnonlin}. 

\begin{thm}\label{ddfactnonlin}(see \cite[Lemma 16]{Nikolov}) For $x\in\mathbb{R}^n$, we have
\begin{align}
&\inf_{\Theta\in~\mathbb{S}^{k}_{++}} -
\textstyle
\sum_{\ell=k-t+1}^k 
\log\left(\lambda_{\ell} \left(\Theta\right)\right)
 +x^\top \diag(F \Theta F^\top) - t\nonumber \\[2pt]
& \quad \quad =\left\{\begin{array}{ll} 
\Gamma_t(F(x)),
&\quad \mbox{if}~ F(x)\succeq 0 \mbox{ and }\rank(F(x))\geq t;\\
-\infty, &\quad \mbox{otherwise.}
\end{array}\right.\nonumber
\end{align}
\end{thm}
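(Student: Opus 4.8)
The plan is to first simplify the objective, reduce the matrix problem to a scalar one, and then split into the bounded and unbounded cases. Since $x^\top\diag(F\Theta F^\top)=\sum_{i\in N}x_i\, F_{i\cdot}\Theta F_{i\cdot}^\top=\Theta\bullet F(x)$, I would set $X:=F(x)$ and rewrite the objective of \eqref{infnonlin} as $-\sum_{\ell=k-t+1}^k\log\lambda_\ell(\Theta)+\Theta\bullet X-t$, noting that $\lambda_{k-t+1}(\Theta),\ldots,\lambda_k(\Theta)$ are the $t$ \emph{smallest} eigenvalues of $\Theta$. The whole expression is convex in $\Theta$ on $\mathbb{S}^k_{++}$, and the logarithmic term depends on $\Theta$ only through its spectrum, which is exactly what makes the eigenvalue reduction below possible.

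For the ``otherwise'' case I would exhibit explicit $\Theta$ driving the objective to $-\infty$. If $X\not\succeq 0$, pick a unit vector $v$ with $v^\top X v<0$ and take $\Theta:=\epsilon I+\alpha vv^\top$ with $\alpha\to+\infty$: for $t<k$ the growing eigenvalue $\alpha$ is not among the $t$ smallest, so the log-term stays bounded while $\Theta\bullet X\to-\infty$; for $t=k$ the log-term is $-\log\det\Theta$ and both it and $\Theta\bullet X$ tend to $-\infty$. If instead $X\succeq 0$ but $\rank(X)<t$, I would take $\Theta:=P_{\mathrm{ran}}+\alpha P_{\ker}$, where $P_{\mathrm{ran}},P_{\ker}$ project onto the range and kernel of $X$; then $\Theta\bullet X$ is constant, while at least $t-\rank(X)\ge 1$ of the $t$ smallest eigenvalues of $\Theta$ equal $\alpha$, so $-\sum_{\ell=k-t+1}^k\log\lambda_\ell(\Theta)\to-\infty$.

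For the main case ($X\succeq 0$, $\rank(X)\ge t$) I would reduce to a scalar program. Fixing the spectrum $\beta_1\ge\cdots\ge\beta_k>0$ of $\Theta$, the log-term is fixed and, by the von Neumann trace inequality, $\min\Theta\bullet X=\sum_{\ell=1}^k\beta_\ell\,\lambda_{k+1-\ell}(X)$, attained when the eigenvectors of $\Theta$ are anti-aligned with those of $X$. Writing $\gamma_\ell:=\beta_{k+1-\ell}$ (so $0<\gamma_1\le\cdots\le\gamma_k$) and $\mu_\ell:=\lambda_\ell(X)$, and noting that the $t$ smallest $\beta$'s are precisely $\gamma_1,\ldots,\gamma_t$, the infimum in \eqref{infnonlin} becomes the convex scalar program
\[
\inf_{0<\gamma_1\le\cdots\le\gamma_k}\left\{-\sum_{\ell=1}^t\log\gamma_\ell+\sum_{\ell=1}^k\mu_\ell\gamma_\ell-t\right\}.
\]

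Finally I would solve this program and match its value to $\Gamma_t(X)$. Dropping the ordering constraint, each $\ell\le t$ wants $\gamma_\ell=1/\mu_\ell$ while each $\ell>t$ wants $\gamma_\ell\to 0$; the monotonicity constraint forces a pool-adjacent-violators solution with a single break point $\iota$, namely $\gamma_\ell=1/\mu_\ell$ for $\ell\le\iota$ and $\gamma_\ell=c:=(t-\iota)\big/\sum_{j=\iota+1}^k\mu_j$ for $\ell>\iota$. Lemma \ref{Ni13} supplies exactly the index $\iota$ for which this is both feasible ($1/\mu_\iota<c$) and KKT-optimal, via the chain $\mu_\iota>1/c\ge\mu_{\iota+1}$. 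A direct substitution then collapses the objective to $\sum_{\ell=1}^\iota\log\mu_\ell+(t-\iota)\log\!\big(\tfrac{1}{t-\iota}\sum_{\ell=\iota+1}^k\mu_\ell\big)$, which is $\phi_t(\lambda(X))=\Gamma_t(X)$ by \eqref{def:phi}. The main obstacle is precisely this reduction-and-resolution: one must carry out the von Neumann step rigorously, keep track of the fact that only the smallest block of eigenvalues feeds the logarithm (and that this interacts nontrivially with the ordering constraint), and show the optimum splits at Lemma \ref{Ni13}'s index $\iota$ into an individually optimized prefix and a pooled suffix; the singular-$X$ edge cases (some $\mu_\ell=0$) also need care to keep $\sum_{\ell>\iota}\mu_\ell>0$, which holds because $\iota<t\le\rank(X)$.
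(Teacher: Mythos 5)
Your proposal is correct, but in the main case it takes a genuinely different route from the paper. The paper constructs the same candidate $\hat\Theta$ (eigenvalues $1/\hat\lambda_\ell$ up to $\hat\iota$, then $1/\hat\delta$, then $(1+\epsilon)/\hat\delta$ on the kernel of $F(\hat x)$), verifies its objective value is $\Gamma_t(F(\hat x))$, and then certifies optimality by \emph{subdifferential calculus on the matrix function}: it writes down $\partial \Delta_t(\hat\Theta)$ explicitly as a set of matrices $V\Diag(\lambda)V^\top$ with $\lambda$ ranging over a convex hull of $0/1$-patterned subgradient vectors, and checks $-F(\hat x)\in\partial\Delta_t(\hat\Theta)$. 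You instead reduce the matrix problem to a scalar isotonic program via von Neumann's trace inequality (legitimate, since the log-term depends only on the spectrum of $\Theta$, so one may minimize over eigenvectors first) and then solve that program by a pool-adjacent-violators/KKT argument, with Lemma \ref{Ni13} supplying exactly the break index; your unboundedness constructions for the two degenerate cases also differ in detail from the paper's single one-parameter family but are equally valid. Interestingly, your strategy mirrors what the paper does for the \emph{other} Lagrangian computation (Theorem \ref{dfactnonlin}, via rearrangement and Schur majorization), so it is a natural unification. The trade-off: your route is more elementary and makes the role of Lemma \ref{Ni13} transparent as an isotonic-regression break-point condition, while the paper's subdifferential route produces, as a by-product, the explicit (sub)gradient description of $\Gamma_t$ that is reused later (Theorem \ref{thm:supgrad}, Lemma \ref{lem:optimal_xhat_diagf}). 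To be fully rigorous you would still need to write out the KKT verification of the pooled scalar solution (analogous to the optimality-condition check in the proof of Theorem \ref{dfactnonlin}), but there is no gap in the plan.
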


\begin{proof}  
In the proof, we write $\hat x$ for $x$ to emphasize that it is fixed.
We have
\begin{equation*}
\inf_{\Theta\in~\mathbb{S}^{k}_{++}} -
\textstyle
\sum_{\ell=k-t+1}^k 
\log\left(\lambda_{\ell} \left(\Theta\right)\right)
 +\hat{x}^\top \diag(F \Theta F^\top) - t
 \end{equation*}
 \begin{equation}
 \quad =\inf_{\Theta\in~\mathbb{S}^{k}_{++}} -
 \textstyle
 \sum_{\ell=k-t+1}^k
 \log\left(\lambda_{\ell} \left(\Theta\right)\right)
 +F(\hat{x})\bullet \Theta - t, \label{convp}
\end{equation}
where $F(\hat{x})=\sum_{i=1}^n F_{i\cdot}^\top F_{i\cdot} ~\hat{x}_i$ and $\lambda_\ell(\Theta)$ denotes the $\ell$-th greatest eigenvalue of $\Theta$. 

We first assume that $F(\hat{x})\succeq 0$ and $\hat r:=\rank(F(\hat{x}))\geq t$. In this case,  it suffices to show that there is an optimal solution $\hat \Theta$ to \eqref{convp}, such that 
\begin{equation}\label{Gammaident}
\textstyle
-\sum_{\ell=k-t+1}^k 
\log (\lambda_{\ell} (\hat\Theta))
 +F(\hat{x})\bullet \hat\Theta - t=\Gamma_t(F(\hat{x})).
\end{equation}

We  consider the spectral 
decomposition of $F(\hat{x})$,
$
F(\hat{x})=
\textstyle
\sum_{\ell=1}^{k} \hat \lambda_\ell \hat u_\ell \hat u_\ell^\top\,,
$
with $\hat \lambda_1\geq\hat \lambda_2\geq\cdots\geq \hat \lambda_{\hat r}>\hat \lambda_{\hat{r}+1}=\cdots=\hat \lambda_k=0$,
and the spectral 
decomposition of our matrix variable $\Theta$ as 
$
\textstyle
\Theta=\sum_{\ell=1}^{k} \beta_\ell(\Theta) v_\ell(\Theta) v_\ell(\Theta)^\top,
$
where here, we conveniently assume that
$\beta_1(\Theta)\leq\beta_2(\Theta)\leq\cdots\leq \beta_k(\Theta)$. 
Now, we  define 
$
\textstyle
\hat{\Theta}:=\sum_{\ell=1}^{k} {\hat \beta}_\ell \hat{u}_\ell \hat{u}_\ell^\top\,,
$
where 
\begin{equation*} 
\hat{\beta}_\ell:=\left\{
\begin{array}{ll}
       1/\hat{\lambda}_\ell\,,&\mbox{ for }~1\leq \ell\leq \hat{\iota};\\
       1/\hat{\delta},&\mbox{ for }~\hat{\iota}<\ell\leq \hat{r};\\
       (1+\epsilon)/\hat{\delta},&\mbox{ for }~\hat{r}<\ell\leq k,
\end{array}\right.
\end{equation*}
 where $\epsilon$ is any positive number,  $\hat{\iota}$ is the unique integer defined  in Lemma \ref{Ni13} for $\lambda_\ell=\hat{\lambda}_\ell$\,, and
\begin{equation}\label{defhatdelta}
\hat \delta:=\textstyle \frac{1}{t-\hat \iota}
\textstyle
\sum_{\ell=\hat \iota+1}^{k}\hat \lambda_\ell\,.
\end{equation}
Note  that $0<\hat \beta_1\leq\hat \beta_2\leq\cdots\leq \hat \beta_k$\,.

From Lemma \ref{Ni13}, we have that $\hat\iota<t$. Then, as $t\leq \hat{r}$,  
\begin{equation}\label{sumbeta}
\begin{array}{ll}
-\textstyle \sum_{\ell=1}^t \log\left(\hat{\beta}_{\ell}\right) &= -\textstyle \sum_{\ell=1}^{\hat{\iota}} \log\left(\frac{1}{\hat{\lambda}_{\ell}}\right) -\textstyle \sum_{\ell=\hat{\iota}+1}^t \log\left(\frac{1}{\hat{\delta}}\right) \\
&= \textstyle \sum_{\ell=1}^{\hat{\iota}} \log\left(\hat{\lambda}_{\ell}\right) + (t-\hat{\iota})\log(\hat{\delta})= \Gamma_t(F(\hat{x})),
\end{array}
\end{equation}
where the last identity follows from the definition of $\Gamma_t$ and \eqref{defhatdelta}.

Moreover, considering that $\hat\lambda_\ell=0$, for $\ell>\hat{r}$, from \eqref{defhatdelta}, we also have
\begin{equation}\label{fdottheta}
F(\hat{x})\bullet \hat{\Theta}=\sum_{\ell=1}^{\hat{r}} \hat{\lambda}_{\ell}\hat{\beta}_{\ell} = \sum_{\ell=1}^{\hat{\iota}} \hat{\lambda}_{\ell}\frac{1}{\hat{\lambda}_{\ell}}  + \sum_{\ell={\hat{\iota}}+1}^{\hat{r}} \hat{\lambda}_{\ell}\frac{1}{\hat \delta} =\hat\iota + \frac{1}{\hat \delta}\sum_{\ell=\hat{\iota}+1}^{\hat{r}} \hat{\lambda}_{\ell} = \hat{\iota} + (t-\hat{\iota}) = t,
\end{equation}
which, together with \eqref{sumbeta},  proves of the identity in \eqref{Gammaident}. 

Therefore,  it remains to show that $\hat{\Theta}$ is an optimal solution to \eqref{convp}, which we do in the following by showing that the subdifferential of the objective function of \eqref{convp} contains 0 at $\hat{\Theta}$. 
The second term in the objective function of \eqref{convp} is differentiable with respect to $\Theta$. In fact,
$
\frac{\mathrm{d}}{\mathrm{d} \Theta}  \left(F(\hat{x})\bullet \Theta \right)= F(\hat{x}).
$
So, we  need to show that 
$
-F(\hat{x}) \in \partial \Delta_t( \hat \Theta), 
$
where
$
\Delta_t(\Theta):=-\textstyle \sum_{\ell=k-t+1}^k \log\left(\lambda_{\ell} \left(\Theta\right)\right).
$

We recall that  $\Theta=\sum_{\ell=1}^{k} \beta_\ell(\Theta) v_\ell(\Theta) v_\ell(\Theta)^\top$, with  $\beta_1(\Theta)\leq\beta_2(\Theta)\leq\cdots\leq \beta_k(\Theta)$.
Then, if $t\in\mathcal{E}:=\{\hat\iota+1,\ldots, \hat r \}$, where $\mathcal{E}$ is defined so that  $\beta_{\hat\iota}(\Theta)<\beta_{\hat\iota+1}(\Theta)=\cdots=\beta_{\hat r }(\Theta)<\beta_{\hat r+1}(\Theta)$, we have 
that the subdifferential of $\Delta_t$ at $\Theta$ is given by
\[
\partial \Delta_t(\Theta) =
\left\{
V\Diag(\lambda)V^\top : 
\lambda \in \mathcal{U}(\beta(\Theta)),~
V^\top V = I,~ \Theta=V \Diag(\beta(\Theta))V^\top\right\},
\]
where  $\mathcal{U}(\beta(\Theta))$
is the convex hull of the subgradients 
\[
 \left(-\frac{1}{\beta_1(\Theta)},-\frac{1}{\beta_2(\Theta)},\ldots,-\frac{1}{\beta_{\hat\iota}(\Theta)}, g_{\hat\iota+1},\ldots,g_{\hat r},0,\ldots,0\right)^\top,
\]
with
\[
g_i :=
\left\{
  \begin{array}{ll}
  -1/\beta_i(\Theta), & \hbox{ for $i\in \mathcal{F}$;} \\
   0 , & \hbox{  for $i\in \mathcal{E}\setminus \mathcal{F}$,}
  \end{array}
\right.
\]
for all sets  $\mathcal{F}$, such that  
$\mathcal{F}\subset \mathcal{E}$ and $|\mathcal{F}|=t-\hat\iota$.

Thus, we may write
\[
\begin{array}{ll}
\mathcal{U}&(\beta(\Theta))=\Big\{\lambda:\lambda_i=-1/\beta_i(\Theta),\quad1\leq i\leq \hat\iota,\\[5pt]
&(\lambda_{\hat\iota +1}, \ldots,\lambda_{\hat r})\in\textstyle\frac{-1}{\beta_{\hat r}(\Theta)}\conv\left\{\omega\in\mathbb{R}^{\hat r - \hat\iota}:\displaystyle \mathbf{e}^\top \omega=t-\hat\iota,~\omega\in\{0,1\}^{\hat r - \hat\iota}\right\},\\[5pt]
&\lambda_{i}=0,\quad \hat r+1\leq i\leq k\Big\}.
\end{array}
\]
Finally, considering that $-F(\hat{x})=-\sum_{\ell=1}^{k} \hat\lambda_\ell \hat u_\ell \hat u_\ell^\top$\,, $\hat \Theta=\sum_{\ell=1}^{k} \hat\beta_\ell \hat u_\ell \hat u_\ell^\top$\,, $\hat \delta=\frac{1}{t-\hat \iota}\sum_{\ell=\hat \iota+1}^{\hat r}\hat \lambda_\ell$\,, 
$
\hat\lambda=\left({\hat\lambda_1},\ldots,{\hat\lambda_{\hat\iota}}, \hat\lambda_{\hat\iota+1},\ldots,\hat\lambda_{\hat r},0,\ldots,0\right)^\top
$, and
\[
\hat\beta=\Big(\frac{1}{\hat\lambda_1},\ldots,\frac{1}{\hat\lambda_{\hat\iota}},\underbrace{ \frac{1}{\hat\delta},\ldots,\frac{1}{\hat\delta}}_{\mathcal{E}},\frac{1+\epsilon}{\hat\delta},\ldots,\frac{1+\epsilon}{\hat \delta}\Big)^\top,
\]
for a given $\epsilon>0$, and noting that 
\begin{align*}
&\hat\delta\times \conv\left\{\omega\in\mathbb{R}^{\hat r - \hat\iota}:\displaystyle \mathbf{e}^\top \omega=t-\hat\iota,~\omega\in\{0,1\}^{\hat r - \hat\iota}\right\}\\
&\qquad = \left\{\omega\in\mathbb{R}^{\hat r - \hat\iota}: \mathbf{e}^\top \omega=(t-\hat\iota)\hat\delta,~0\leq \omega_i\leq \hat\delta, ~i=1,\ldots,\hat r - \hat\iota\right\},
\end{align*}
we can see that $-\hat\lambda \in \mathcal{U}(\hat\beta)$, and, therefore, $-F(\hat{x}) \in \partial \Delta_t( \hat \Theta)$, which completes the proof for our first case.

Now, we consider the second case, where we have
$F(\hat{x})\nsucceq 0$ or $\rank(F(\hat{x}))< t$. We use the   spectral 
decomposition 
$F(\hat{x})=\sum_{\ell=1}^k\hat\lambda_\ell \hat u_\ell \hat u_\ell^\top$\,, and define $\hat\lambda_0:=+\infty$. We assume that 
\[
\hat \lambda_0\geq\hat \lambda_1\geq\hat\lambda_2\geq \cdots\geq \hat \lambda_p>0=\hat \lambda_{p+1}=\cdots=\hat \lambda_q>\hat \lambda_{q+1}\geq\hat \lambda_{q+2}\geq \cdots\geq \hat\lambda_k\,,
\]
for some $p,q\in[0,k]$.

In this case, it is straightforward to  see that $\hat\Theta:=\sum_{\ell=1}^{p} (1/\hat \lambda_\ell) \hat{u}_\ell \hat{u}_\ell^\top + \sum_{\ell=p+1}^{k} \alpha~\hat{u}_{\ell} \hat{u}_{\ell}$ is a feasible solution to \eqref{convp} for any $\alpha> 0$. When $\alpha>1/\hat\lambda_p$\,, the objective value of \eqref{convp} at $\hat\Theta$ becomes 
\begin{equation}\label{objconvp}
-\textstyle\sum_{\ell=1}^p \log \frac{1}{\hat\lambda_\ell} -\sum_{\ell=p+1}^t \log \alpha + p - \sum_{\ell=q}^k \alpha|\hat\lambda_\ell|.
\end{equation}
From the assumptions on $F(\hat{x})$, we see that either $q<k$ (so $\lambda_k<0$) or $p<t$. Therefore, we can verify that, in both cases,  as $\alpha\rightarrow +\infty$, \eqref{objconvp} goes to $-\infty$, which completes our proof.
\qed\end{proof}

Finally,   we  analytically characterize the infimum in  \eqref{inflin} in Theorem \ref{ddfactlin}. Its  proof follows from the fact that a linear function is bounded below only when it is identically zero.

\begin{thm}\label{ddfactlin} For $(x,y,w)\in  \mathbb{R}^{n}\times \mathbb{R}^{n}\times  \mathbb{R}^{m}$, we have
\begin{align*}
&\inf_{\nu,\pi,\tau}
 \nu^\top (\mathbf{e} - x - y)  + \pi^\top( b - Ax - w) +\tau (s - \mathbf{e}^\top x) \\
&\quad \quad = \left\{\begin{array}{cl}
0,& \quad \mbox{if  } ~ \mathbf{e} - x - y=0,~b - Ax - w=0,~s - \mathbf{e}^\top x=0;\\
-\infty, &\quad \mbox{otherwise.}
\end{array}\right.
\end{align*}
\end{thm}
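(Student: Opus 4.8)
The plan is to exploit the fact that the quantity under the infimum is a linear function of the unconstrained multipliers $(\nu,\pi,\tau)$, and that it \emph{separates} into three independent blocks. I would begin by abbreviating the coefficient data: set $a:=\mathbf{e}-x-y\in\mathbb{R}^n$, $c:=b-Ax-w\in\mathbb{R}^m$, and $d:=s-\mathbf{e}^\top x\in\mathbb{R}$, so that the objective becomes simply $\nu^\top a+\pi^\top c+\tau d$. Since $\nu$, $\pi$, and $\tau$ range freely over $\mathbb{R}^n$, $\mathbb{R}^m$, and $\mathbb{R}$ with no coupling constraints, the infimum of the sum equals the sum of the three separate infima, so I can treat each block in isolation.

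Next I would record the elementary one-block fact underlying the whole argument: for a fixed vector $a$, the map $\nu\mapsto\nu^\top a$ on $\mathbb{R}^n$ has infimum $0$ when $a=0$ (it is then identically zero) and infimum $-\infty$ when $a\neq 0$ (taking $\nu:=-M a$ and letting $M\to+\infty$ drives $\nu^\top a=-M\norm{a}^2$ to $-\infty$). The identical statement holds verbatim for the $\pi$-block with coefficient $c$ and for the scalar $\tau$-block with coefficient $d$.

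I would then combine these: the overall infimum is $-\infty$ as soon as at least one of $a$, $c$, $d$ is nonzero, and it equals $0+0+0=0$ precisely when $a=0$, $c=0$, and $d=0$ hold simultaneously, i.e.\ when $\mathbf{e}-x-y=0$, $b-Ax-w=0$, and $s-\mathbf{e}^\top x=0$. This reproduces exactly the claimed case distinction.

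There is essentially no obstacle here, which matches the paper's own one-line summary that a linear function is bounded below only when it is identically zero; the only point requiring a moment's care is the separability justification, namely that because the three multiplier blocks are mutually unconstrained one may minimize each independently, with no interaction between them. Everything else is the standard characterization of boundedness-below of a linear functional.
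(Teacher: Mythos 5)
Your proposal is correct and matches the paper's approach, which simply observes that a linear function is bounded below only when it is identically zero; your block-by-block decomposition just spells out that observation in detail. Nothing further is needed.
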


Considering Theorems \ref{ddfactnonlin} and \ref{ddfactlin}, the Lagrangian dual of \ref{DGFact} is equivalent to 

\begin{equation}\tag{DDGFact}\label{DDGFact}
\begin{array}{ll}
&z_{{\mbox{\tiny DDGFact}}}(C,s,t,A,b;F)=\max \left\{\Gamma_t(F(x)) ~:~ \mathbf{e}^\top x =s,~Ax\leq b,~ 0\leq x\leq \mathbf{e}\right\},
\end{array}
\end{equation}

From Lagrangian duality, we have that \ref{DDGFact} is a convex-optimization problem. Moreover, because \ref  {DGFact} has a strictly-feasible solution,  if  \ref{DDGFact} has a feasible solution with finite objective value, 
then we have strong duality between \ref{DDGFact} and \ref{DGFact}. 

Finally, for developing a  nonlinear-optimization algorithm
for \ref{DDGFact}, we consider in the next theorem,  an expression for the gradient of its
 objective function. The proof is similar to the one presented for MESP in 
\cite[Thm. 2.10]{ChenFampaLee_Fact}.

\begin{thm}\label{thm:supgrad}
Let $F(\hat x)=\sum_{\ell=1}^{k} \hat\lambda_{\ell} \hat u_{\ell} \hat u_{\ell}^{\top}$ be a spectral decomposition of $F(\hat x)$.
Let $\hat \iota$ be the value of
$\iota$ in Lemma \ref{Ni13},
where $\lambda$ in Lemma \ref{Ni13}
is $\hat{\lambda}:=\lambda(F(\hat x))$.
 If $\frac{1}{t-\hat \iota}\sum_{\ell=\hat \iota+1}^k \hat \lambda_{\ell}> \hat \lambda_{\hat\iota +1}$\,, then, for 
$j=1,2,\ldots,n$,
\begin{equation*}
      \frac{\partial}{\partial x_j} \Gamma_t(F(\hat x))
    =  \sum_{\ell=1}^{\hat\iota} \frac{1}{\hat\lambda_\ell}(F_{j\cdot} \hat u_\ell)^2
    + \sum_{\ell=\hat\iota+1}^{k} \frac{t-\hat\iota}{\sum_{i=\hat\iota+1}^{k}\hat\lambda_i}  (F_{j\cdot} \hat u_\ell)^2\thinspace.
\end{equation*}
\end{thm}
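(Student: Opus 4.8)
The plan is to apply the chain rule to the composition $\Gamma_t(F(x)) = \phi_t(\lambda(F(x)))$, viewing it as the outer spectral function $\phi_t$ (defined in \eqref{def:phi}) evaluated at the eigenvalues of the inner affine matrix map $x \mapsto F(x) = F^\top\Diag(x)F$, whose $x_j$-derivative is the rank-one matrix $F_{j\cdot}^\top F_{j\cdot}$. The target formula is exactly what the chain rule predicts once one notes that the partial derivatives of $\phi_t$ at $\hat\lambda := \lambda(F(\hat x))$ are $\partial\phi_t/\partial\lambda_\ell = 1/\hat\lambda_\ell$ for $\ell \le \hat\iota$ and $\partial\phi_t/\partial\lambda_\ell = (t-\hat\iota)/\sum_{i=\hat\iota+1}^{k}\hat\lambda_i$ for $\ell > \hat\iota$; pairing these with the first-order eigenvalue perturbation $\hat u_\ell^\top (F_{j\cdot}^\top F_{j\cdot})\hat u_\ell = (F_{j\cdot}\hat u_\ell)^2$ yields the stated expression. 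The real content is therefore not the algebra but justifying that the composition is genuinely differentiable at $\hat x$, since eigenvalues are not smooth functions of a matrix in the presence of multiplicities.

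First I would exploit the hypothesis to pin down the combinatorial part of $\phi_t$. The assumption $\frac{1}{t-\hat\iota}\sum_{\ell=\hat\iota+1}^k\hat\lambda_\ell > \hat\lambda_{\hat\iota+1}$ makes the right inequality in \eqref{reslemiota} strict, while the left inequality $\hat\lambda_{\hat\iota} > \frac{1}{t-\hat\iota}\sum_{\ell=\hat\iota+1}^k\hat\lambda_\ell$ of Lemma \ref{Ni13} is always strict; in particular $\hat\lambda_{\hat\iota} > \hat\lambda_{\hat\iota+1}$, so there is a genuine spectral gap after position $\hat\iota$. Since the eigenvalues of $F(x)$ are continuous in $x$ and both defining inequalities for $\hat\iota$ hold strictly at $\hat x$, they persist on a neighborhood of $\hat x$; hence the integer $\iota$ selected by Lemma \ref{Ni13} is locally constant and equal to $\hat\iota$. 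Consequently, on this neighborhood $\Gamma_t(F(x))$ is given by the single expression $\sum_{\ell=1}^{\hat\iota}\log\lambda_\ell(F(x)) + (t-\hat\iota)\log\!\big(\frac{1}{t-\hat\iota}\sum_{\ell=\hat\iota+1}^{k}\lambda_\ell(F(x))\big)$, with no switching of pieces.

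Next I would establish differentiability of the two grouped eigenvalue quantities appearing here, which is where the spectral gap does its work. Because $\hat\lambda_{\hat\iota} > \hat\lambda_{\hat\iota+1}$, the spectral projector $P(x)$ onto the dominant $\hat\iota$-dimensional invariant subspace of $F(x)$ is an analytic function of $x$ near $\hat x$, so $\sum_{\ell=1}^{\hat\iota}\log\lambda_\ell(F(x))$ equals $\log\det$ of the compression of $F(x)$ to the range of $P(x)$ and is therefore differentiable, even when the top $\hat\iota$ eigenvalues themselves carry multiplicities; likewise $\sum_{\ell=\hat\iota+1}^{k}\lambda_\ell(F(x)) = \Trace(F(x)) - \sum_{\ell=1}^{\hat\iota}\lambda_\ell(F(x))$ is differentiable. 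Differentiating by the standard rules ($\mathrm{d}\log\det$ and first-order perturbation of a grouped eigenvalue sum) gives the $x_j$-derivatives of these two quantities as $\sum_{\ell=1}^{\hat\iota}\frac{1}{\hat\lambda_\ell}(F_{j\cdot}\hat u_\ell)^2$ and $\sum_{\ell=\hat\iota+1}^{k}(F_{j\cdot}\hat u_\ell)^2$, respectively; each expression is independent of the orthonormal eigenbasis chosen within a multiplicity block, since it depends only on $\Trace(P_{\text{block}}\,F_{j\cdot}^\top F_{j\cdot})$. Applying the chain rule to the outer $\log$ with the coefficient $(t-\hat\iota)/\sum_{i=\hat\iota+1}^k\hat\lambda_i$ and summing then produces the claimed gradient.

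The main obstacle is precisely the non-smoothness of eigenvalues under multiplicities, and the hypothesis $\frac{1}{t-\hat\iota}\sum_{\ell=\hat\iota+1}^k\hat\lambda_\ell > \hat\lambda_{\hat\iota+1}$ is exactly what neutralizes it: it simultaneously keeps $\iota$ locally fixed (so $\Gamma_t$ is locally a single formula rather than a selection among pieces) and forces the gap $\hat\lambda_{\hat\iota} > \hat\lambda_{\hat\iota+1}$ that makes the dominant projector, and hence both grouped eigenvalue sums, smooth. Without this strict inequality one would be at a breakpoint where $\iota$ may change, or where an eigenvalue straddles the boundary between the ``logged'' block and the ``averaged'' block, and $\Gamma_t$ would in general only be directionally differentiable; this is the same phenomenon, and the same remedy, as in the MESP argument of \cite[Thm. 2.10]{ChenFampaLee_Fact}.
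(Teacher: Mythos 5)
Your proof is correct and takes essentially the approach the paper intends: the paper omits the proof of Theorem~\ref{thm:supgrad}, deferring to the MESP case \cite[Thm.~2.10]{ChenFampaLee_Fact}, and your chain-rule derivation --- using the hypothesis to lock $\hat\iota$ locally constant and to force the spectral gap $\hat\lambda_{\hat\iota}>\hat\lambda_{\hat\iota+1}$ so that the two grouped eigenvalue sums (via the analytic dominant spectral projector) are differentiable despite possible multiplicities within each block --- is the standard way that argument is carried out, adapted from $t=s$ to general $t$. I see no gaps.
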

As observed in \cite{ChenFampaLee_Fact}, without the technical condition $\frac{1}{t-\hat \iota}\sum_{\ell=\hat \iota+1}^k \hat \lambda_{\ell}> \hat \lambda_{\hat\iota +1}$\,,
the formula above still gives a subgradient of $\Gamma_t$
(see \cite{Weijun}).

 As we already pointed out, \ref{GFact} is an exact but nonconvex  relaxation of \ref{CGMESP}.
 On the other hand,  \ref{DDGFact} is a convex relaxation for \ref{CGMESP}, 
which is non-exact generally, except for the important case of  $t=s$ when it becomes exact.
In Theorem \ref{thm:propphi}, we present   properties of the function $\phi_t$ defined in \eqref{def:phi}, which show that,  the relaxation is generally non-exact for $t<s$ and it is always exact for $t=s$. In Lemma \ref{lem:iota_t} we prove the relevant facts for their understanding.

\begin{lem}\label{lem:iota_t}
 Let $\lambda\in\mathbb{R}_+^n$ with $\lambda_1\geq \lambda_2\geq \cdots\geq
 \lambda_\delta> \lambda_{\delta+1}=\cdots=
 \lambda_r>\lambda_{r+1}=\cdots
 =\lambda_n=0$. Then,

 \begin{enumerate}
     \item[\rm(a)] For $t=r$, the $\iota$ satisfying
 \eqref{reslemiota} is
 precisely $\delta$. 
 
 \item[\rm(b)] For $r<t\leq n$, the $\iota$ satisfying
 \eqref{reslemiota} is
 precisely $r$. 
 \end{enumerate}
\end{lem}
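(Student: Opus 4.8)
The plan is to verify, in each case, that the integer produced by the defining inequalities of Lemma~\ref{Ni13} coincides with the claimed value, using the uniqueness guaranteed by that lemma. Concretely, Lemma~\ref{Ni13} asserts that there is a \emph{unique} $\iota$ with $0\le \iota<t$ satisfying $\lambda_\iota > \frac{1}{t-\iota}\sum_{\ell=\iota+1}^k \lambda_\ell \ge \lambda_{\iota+1}$ (with $\lambda_0=+\infty$). So for each part it suffices to exhibit a candidate value of $\iota$, check that it lies in the admissible range $0\le\iota<t$, and check that it satisfies both inequalities; uniqueness then forces it to be \emph{the} $\iota$.

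For part (a), with $t=r$, I would propose the candidate $\iota:=\delta$. First I would observe that $\delta<r=t$ (since $\lambda_\delta>\lambda_{\delta+1}$ means $\delta$ is strictly less than $r$, as the equal block $\lambda_{\delta+1}=\cdots=\lambda_r$ is nonempty), so $\delta$ lies in the admissible range. The key computation is the average $\frac{1}{t-\delta}\sum_{\ell=\delta+1}^k\lambda_\ell$: since $\lambda_{\delta+1}=\cdots=\lambda_r$ equals their common value, call it $\mu$, and $\lambda_{r+1}=\cdots=\lambda_k=0$, the sum $\sum_{\ell=\delta+1}^k\lambda_\ell=(r-\delta)\mu=(t-\delta)\mu$, so the average is exactly $\mu=\lambda_{\delta+1}$. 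Then the right inequality $\frac{1}{t-\delta}\sum\ge\lambda_{\delta+1}$ holds with equality, and the left inequality $\lambda_\delta>\mu=\lambda_{\delta+1}$ holds because $\delta$ was defined precisely as the last index before the strict drop. Hence $\iota=\delta$.

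For part (b), with $r<t\le n$, I would propose the candidate $\iota:=r$. Here $r<t$ by hypothesis, so $r$ is in the admissible range. Now $\sum_{\ell=r+1}^k\lambda_\ell=0$ because all those eigenvalues vanish, so the average $\frac{1}{t-r}\sum_{\ell=r+1}^k\lambda_\ell=0$. The right inequality $0\ge\lambda_{r+1}=0$ holds with equality, and the left inequality $\lambda_r>0$ holds since $\lambda_r$ is (by the stated ordering) a positive eigenvalue, being at least $\lambda_\delta>\lambda_{\delta+1}>0$ when $\delta<r$, and more simply $\lambda_r>\lambda_{r+1}=0$ in all cases. Thus $\iota=r$.

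The two parts are essentially symmetric exercises in evaluating the averaged tail sum, and there is no deep obstacle; the only point requiring a little care is the bookkeeping of where the strict inequalities sit relative to the flat blocks. The main thing to get right is that the common value of the middle block makes the averaging inequality tight (equality on the right), and that the admissible-range constraint $\iota<t$ is met---in part (a) because the equal block $\{\delta+1,\ldots,r\}$ is nonempty, and in part (b) directly by hypothesis. Once the candidate is seen to satisfy \eqref{reslemiota}, uniqueness from Lemma~\ref{Ni13} closes each case immediately.
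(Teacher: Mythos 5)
Your proposal is correct and follows essentially the same route as the paper: in each case you evaluate the averaged tail sum, observe that it equals $\lambda_{\iota+1}$ (so the right inequality of \eqref{reslemiota} is tight) while the left inequality is strict, and invoke the uniqueness from Lemma~\ref{Ni13}. The extra bookkeeping you supply (the admissible-range check and the role of the flat blocks) is a harmless elaboration of the paper's two-line verification.
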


\begin{proof}
For (a), the result follows  because
\[
\textstyle\frac{1}{r-\delta}\sum_{\ell=\delta+1}^n \lambda_{\ell} = \lambda_{\delta+1} \quad\mbox{and}\qquad \lambda_\delta> \lambda_{\delta+1}\,.
\]
For (b), the result follows because 
\[
\textstyle\frac{1}{t-r}\sum_{\ell=r+1}^n \lambda_{\ell} = 0 =\lambda_{r+1} \quad\mbox{and}\qquad \lambda_r> \lambda_{r+1}\,.
\]
\qed
\end{proof}

\begin{thm}\label{thm:propphi}
Let $\lambda\in\mathbb{R}_+^n$ with $\lambda_1\geq \lambda_2\geq \cdots\geq
 \lambda_r>\lambda_{r+1}=\cdots
 =\lambda_n=0$. Then,
 \[
 \begin{array}{lll}
 \mbox{\rm(a)}&\textstyle\phi_{t}(\lambda) >  \sum_{\ell=1}^{t} \log\left(\lambda_\ell\right), & \mbox{  for $0<t<r$,}\\[2pt]
\mbox{\rm(b)}&\phi_t(\lambda)=\sum_{\ell=1}^{t} \log\left(\lambda_\ell\right),& \mbox{  for $t= r$,}\\[2pt]
\mbox{\rm(c)}&\phi_t(\lambda)=-\infty, & \mbox{  for $r< t\leq n$.}
 \end{array}
 \]
 where we use $\log(0)=-\infty$. 
\end{thm}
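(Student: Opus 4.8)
The plan is to analyze the three cases directly from the definition of $\phi_t$ in \eqref{def:phi}, combining the value of the critical index $\iota$ from Lemma \ref{Ni13} (specialized via Lemma \ref{lem:iota_t}) with elementary facts about the logarithm. The vector $\lambda$ here has exactly $r$ positive entries and zeros thereafter, with $\lambda_r$ strictly positive; this matches the hypothesis of Lemma \ref{lem:iota_t} if we take $\delta$ to be the index where the last strict drop among the \emph{positive} eigenvalues occurs. I would handle the three regimes $0<t<r$, $t=r$, and $r<t\leq n$ separately, since the behavior of $\iota$ (and whether $\phi_t$ is finite) changes qualitatively across them.

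For part (b), the case $t=r$, I would invoke Lemma \ref{lem:iota_t}(a) to get $\iota=\delta$, where $\lambda_\delta>\lambda_{\delta+1}=\cdots=\lambda_r$. Plugging $\iota=\delta$ into \eqref{def:phi}, the averaged term becomes $(r-\delta)\log\!\big(\tfrac{1}{r-\delta}\sum_{\ell=\delta+1}^n \lambda_\ell\big)$; since $\lambda_{\delta+1}=\cdots=\lambda_r$ are all equal and $\lambda_{r+1}=\cdots=\lambda_n=0$, the average inside the log is exactly $\lambda_{\delta+1}=\lambda_r$, so that term equals $\sum_{\ell=\delta+1}^{r}\log(\lambda_\ell)$, and $\phi_r(\lambda)=\sum_{\ell=1}^{r}\log(\lambda_\ell)$ as claimed. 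For part (c), the case $r<t\leq n$, Lemma \ref{lem:iota_t}(b) gives $\iota=r$, and then the averaged term in \eqref{def:phi} is $(t-r)\log\!\big(\tfrac{1}{t-r}\sum_{\ell=r+1}^n \lambda_\ell\big)=(t-r)\log(0)=-\infty$, so $\phi_t(\lambda)=-\infty$; this is consistent with the rank requirement in Theorem \ref{ddfactnonlin}, since $t$ exceeds the rank $r$.

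The main work is part (a), the strict inequality for $0<t<r$. Here $\phi_t(\lambda)=\sum_{\ell=1}^{\iota}\log(\lambda_\ell)+(t-\iota)\log\!\big(\tfrac{1}{t-\iota}\sum_{\ell=\iota+1}^{k}\lambda_\ell\big)$, while the spectral quantity is $\sum_{\ell=1}^{t}\log(\lambda_\ell)$. Subtracting the common prefix $\sum_{\ell=1}^{\iota}\log(\lambda_\ell)$, the claim reduces to showing
\[
(t-\iota)\log\!\left(\tfrac{1}{t-\iota}\textstyle\sum_{\ell=\iota+1}^{k}\lambda_\ell\right) > \textstyle\sum_{\ell=\iota+1}^{t}\log(\lambda_\ell),
\]
which is equivalent, dividing by $t-\iota$, to the statement that the logarithm of the arithmetic mean of $\lambda_{\iota+1},\ldots,\lambda_k$ strictly exceeds the arithmetic mean of $\log\lambda_{\iota+1},\ldots,\log\lambda_t$. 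The natural tool is the strict AM--GM inequality (equivalently, strict concavity of $\log$), comparing the arithmetic mean over the $t-\iota$ indices $\ell=\iota+1,\ldots,t$ against their geometric mean, and then noting that including the further tail terms $\lambda_{t+1},\ldots,\lambda_k$ (which include strictly positive eigenvalues through index $r$, since $t<r$) only increases the arithmetic mean inside the log on the left.

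The hard part will be pinning down strictness cleanly: I must argue that among the eigenvalues $\lambda_{\iota+1},\ldots,\lambda_t$ (together with the tail up to $\lambda_r$) there is genuine variation, so that AM--GM is \emph{strict} rather than an equality. Because $t<r$, the average $\tfrac{1}{t-\iota}\sum_{\ell=\iota+1}^{k}\lambda_\ell$ strictly dominates the pure geometric-mean bound: either the relevant eigenvalues are not all equal (strict AM--GM on $\lambda_{\iota+1},\ldots,\lambda_t$), or they are equal but the positive tail $\lambda_{t+1},\ldots,\lambda_r$ pushes the left-hand average strictly above $\lambda_t$. I would isolate these two subcases and show each forces a strict inequality, being careful that the defining inequality \eqref{reslemiota} of $\iota$ guarantees the averaged value is well-defined and positive, so that all logarithms in play are finite.
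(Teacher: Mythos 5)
Your proposal is correct and follows essentially the same route as the paper: parts (b) and (c) are read off from Lemma \ref{lem:iota_t}, and part (a) is reduced, after cancelling the common prefix $\sum_{\ell=1}^{\iota}\log(\lambda_\ell)$, to comparing $(t-\iota)\log\bigl(\frac{1}{t-\iota}\sum_{\ell=\iota+1}^{n}\lambda_\ell\bigr)$ with $\sum_{\ell=\iota+1}^{t}\log(\lambda_\ell)$ via the positive tail and the concavity of $\log$. The only difference is that your two-subcase analysis for strictness is unnecessary: since $t<r$, dropping the strictly positive terms $\lambda_{t+1},\ldots,\lambda_r$ already yields a strict inequality, after which the non-strict arithmetic-geometric mean (Jensen) step suffices, which is exactly the paper's one-line chain.
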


\begin{proof}
Part (a) follows from:
\[
(t-\iota)\log\left(\!{\textstyle\frac{1}{t-\iota}}\sum_{\ell=\iota+1}^{n}\lambda_{\ell}\!\right) > (t-\iota)\log\left(\!{\textstyle\frac{1}{t-\iota}}\sum_{\ell=\iota+1}^{t}\lambda_{\ell}\!\right) \geq 
(t-\iota)\left(\!\frac{\sum_{\ell=\iota+1}^{t}\log (\lambda_{\ell})}{t-\iota}\!\right).
\]
Parts (b) and (c) follow from Lemma \ref{lem:iota_t}.
\qed\end{proof}

\begin{rem}
     We can conclude that  $z(C,s,t,A,b;F)\leq z_{{\mbox{\tiny DDGFact}}}(C,s,t,A,b;F)$ from the use of Lagrangian duality. But we note that Theorem \ref{thm:propphi}  gives an alternative and direct proof for this result, besides showing in part (a), that the inequality is strict whenever the rank of any optimal submatrix 
     $C[S,S] $ for \ref{CGMESP} is greater than  $t$. 
     We note that for CMESP, where $t=s$, part (a) cannot happen, and this
     is why the relaxation is exact for CMESP. 
\end{rem}

Finally, we note that to apply the variable-fixing procedure described in Theorem \ref{thm:fixFact} in a  B\&B algorithm to solve \ref{CGMESP}, we need a feasible solution for \ref{DGFact}. To avoid incorrect variable fixing based on dual information from \emph{near}-optimal  solutions to \ref{DDGFact}, following  \cite{Weijun}, we will show  how to rigorously construct a  feasible solution for \ref{DGFact} from  a feasible solution $\hat x$ of \ref{DDGFact}  with finite objective value, with the goal of producing a small gap.

Although in \ref{CGMESP},  the lower bounds are all zero  and the upper bounds are all one, we will consider the more general problem with lower and upper bounds on the variables given respectively by $l,c\in\{0,1\}^n$,
with $l\leq c$. 
So, we consider the constraints $l\leq\! x\!\leq c$ in \ref{DDGFact}, instead of $0\leq\! x\!\leq \mathbf{e}$. The motivation for this, is to derive the technique to fix variables at any subproblem considered during the execution of the B\&B algorithm, when some of the variables may already be fixed (i.e., $l_i=c_i$ fixes $x_i$). Instead of redefining the problem with fewer variables in our numerical experiments, we found that it was more efficient to change the upper bound $c_i$ from one to zero, when variable $i$ is fixed at zero in a subproblem, and similarly,  change the lower bound $l_i$ from zero to one, when variable $i$ is fixed at one. 

Then, to construct the dual solution, we  consider a feasible solution $\hat x$ of \ref{DDGFact}, with the constraints $0\leq x\leq \mathbf{e}$  replaced by $l\leq x\leq c$, and the spectral
decomposition $F(\hat{x})=\sum_{\ell=1}^{k} \hat \lambda_\ell \hat u_\ell \hat u_\ell^\top\,,$
with $\hat \lambda_1\geq\hat \lambda_2\geq\cdots\geq \hat \lambda_{\hat r}>\hat \lambda_{\hat{r}+1}=\cdots=\hat \lambda_k=0$.  Notice that $\rank(F(\hat x))= \hat{r}\geq t$.
Following \cite{Nikolov}, we  set 
$\hat{\Theta}:=\sum_{\ell=1}^{k} {\hat \beta}_\ell \hat{u}_\ell \hat{u}_\ell^\top$\,,
where
\begin{equation}\label{defbetaa}
\hat{\beta}_\ell:=\left\{
\begin{array}{ll}
        \textstyle 1/\hat{\lambda}_\ell\,,
       &~1\leq \ell\leq \hat{\iota};\\
     1/\hat{\delta},&~\hat{\iota}<\ell\leq \hat{r};\\
     (1+\epsilon)/\hat{\delta},&~\hat{r}<\ell\leq k,
\end{array}\right.
\end{equation}
 for any $\epsilon>0$, where $\hat{\iota}$ is the unique integer defined  in Lemma \ref{Ni13} for $\lambda_\ell=\hat{\lambda}_\ell$\,, and
$
\hat \delta:=\frac{1}{t-\hat \iota}\sum_{\ell=\hat \iota+1}^{k}\hat \lambda_\ell
$\thinspace.
From Lemma \ref{Ni13}, we have that $\hat\iota<t$.   Then, 
\begin{equation*} 
\textstyle
- \sum_{\ell=1}^t \log(\hat{\beta}_{\ell})=  \sum_{\ell=1}^{\hat{\iota}} \log(\hat{\lambda}_{\ell}) + (t-\hat{\iota})\log(\hat{\delta})= \Gamma_t(F(\hat{x})).
\end{equation*}
Therefore, the minimum duality gap between $\hat x$ in the modified \ref{DDGFact} and feasible solutions
of its dual problem  of the form $(\hat\Theta,\upsilon,\nu,\pi,\tau)$,
is the optimal value of
\begin{equation}\label{mingapproba}\tag{$G(\hat\Theta)$}
\begin{array}{ll}
\min&~
    -\upsilon^\top l+ \nu^\top c   + \pi^\top b +\tau s - t\\
      \mbox{s.t.} 
&     ~ \upsilon - \nu  - A^\top \pi - \tau\mathbf{e}= - \diag(F \hat \Theta F^\top) ,\\
&~\upsilon\geq 0, ~\nu\geq 0, ~\pi\geq 0.
\end{array}
\end{equation}
We note that \ref{mingapproba} is always feasible (e.g., 
$\upsilon:=0$, $\nu:=\diag(F \hat \Theta F^\top)$, $\pi:=0$, $\tau:=0$ is a feasible solution). 

Also,   \ref{mingapproba} has a simple closed-form solution for \ref{GMESP}, that is when there are no $Ax \leq b$ constraints.
 To construct this  optimal solution to \ref{mingapproba}, 
we consider the permutation $\sigma$ of the indices in $N$, such that $\diag(F\hat\Theta F^\top)_{\sigma(1)} \geq \dots \geq \diag(F\hat\Theta F ^\top)_{\sigma(n)}$\,.
If $c_{\sigma(1)}+\sum_{i=2}^n l_{\sigma(i)}>s$, we let $\varphi:=0$; otherwise we let $\varphi:=\max\{j\in N: \sum_{i=1}^j c_{\sigma(i)} +  \sum_{i=j+1}^n  l_{\sigma(i)}\leq s\}$. 
We define $P := \{\sigma(1),\dots,\sigma(\varphi)\}$ and $Q :=\{\sigma(\varphi + 2),\dots,\sigma(n)\}$.
Then, to obtain an optimal solution of  \ref{mingapproba},   we consider its  optimality conditions 
\begin{equation}\label{kktgfact1f}
\begin{array}{l}
    \diag(F\hat \Theta F^\top)  + \upsilon - \nu - \tau\mathbf{e} = 0,~\upsilon\geq 0,~\nu\geq 0,\\
    \mathbf{e}^\top x = s,~l\leq x \leq c,\\
     - \upsilon^\top l + \nu^\top c + \tau s = \diag(F\hat \Theta F^\top)^\top x.
\end{array}
\end{equation}
We  can verify that the following solution satisfies \eqref{kktgfact1f}.
\begin{align*}
    &x^*_\ell:= \begin{cases}
        c_\ell\,,\quad&\text{for }\ell \in P;\\
        l_\ell\,,&\text{for }\ell \in Q;\\
        s-\sum_{\ell\in P}c_\ell -\sum_{\ell\in Q}l_\ell\,,&\text{for }\ell = \sigma(\varphi +1), ~ \text{if  }\varphi<n,
    \end{cases}\\ 
    &\tau^* :=  \begin{cases}
    \diag(F\hat\Theta F^\top)_{\sigma(\varphi+1)}\,,\quad &\text{if  }\varphi<n;\\
    0,&\text{otherwise},
    \end{cases}\\
    &\nu^*_\ell := \begin{cases}
         \diag(F\hat\Theta F^\top)_{\ell} - \tau^*,\quad &\text{for }\ell \in P;\\
         0,&\text{otherwise},
    \end{cases}\\
    &\upsilon^*_\ell := \begin{cases}
         \tau^*-\diag(F\hat\Theta F^\top)_{\ell}\,,\quad &\text{for } \ell \in Q;\\
         0,&\text{otherwise}.
    \end{cases}
\end{align*}

For the case in which there are no inequality constraints $Ax\leq b$, given a feasible solution $\hat x$ of \ref{DDGFact},
we define the \emph{dual solution associated to} $\hat x$ as the solution 
$(\hat{\Theta},\upsilon^*,\nu^*,\tau^*)$ constructed above.

 \begin{rem}
 Still for the case of
 no $Ax\leq b$  constraints, we note that we could replace $\diag(F \hat \Theta F^\top)$ with any vector in $\mathbb{R}^n$
 in \ref{mingapproba} and in the subsequent definition of $(\upsilon^*,\nu^*,\tau^*)$, and we could 
 even allow arbitrary $l\leq c\in\mathbb{R}^n$ in  the objective function of \ref{mingapproba}  and in the subsequent definition of $x^*$,
 and we would still have optimal primal and dual solutions for  \ref{mingapproba}.
 \end{rem}
 
 In Theorem \ref{dualoptimallcfa}, we will also prove   that, in the specific case of the subproblems solved in our B\&B algorithm to solve \ref{CGMESP}, where  $l\leq c\in\{0,1\}^n$,  the  constructed closed-form solution is an optimal solution  to the dual of the modified \ref{DDGFact}; this resolves, in the positive, Conjecture 17 from \cite{SEA_proceedings}. 
  We wish to point out and emphasize that the special cases of Theorem \ref{dualoptimallcfa} for MESP and binary D-Opt are completely new.
To prove Theorem \ref{dualoptimallcfa}, we must first
establish the key technical Lemma \ref{lem:optimal_xhat_diagf}.

\begin{lem}\label{lem:optimal_xhat_diagf}
   Let $\hat{x}$ be an optimal  solution of \ref{DDGFact},   
   where 
   $0\leq x \leq \mathbf{e}$ is replaced with $l\leq x\leq c$, with $l\leq c\in\{0,1\}^n$,  and there are no side constraints $Ax\leq b$. Let  
     $F(\hat x)= F^\top \Diag(\hat x) F =: \sum_{\ell=1}^{k} \hat\lambda_{\ell} \hat u_{\ell} \hat u_{\ell}^{\top}$ be a spectral decomposition of $F(\hat x)$. Let
       $\hat{\Theta}:=\sum_{\ell=1}^{k} {\hat \beta}_\ell \hat{u}_\ell \hat{u}_\ell^\top$\,, where $\hat\beta$ is defined in \eqref{defbetaa}. Let  $\bar N := \{i\in N: l_{i}=0, c_{i} = 1\}$.
Then, for every $i,j\in \bar{N}$, 
we have
\begin{enumerate}
    \item[\rm(a)] \label{item1} $\diag(F\hat\Theta F^\top)_{i} \geq \diag(F\hat\Theta F^\top)_{j}\,$, if $\hat{x}_i > \hat{x}_j$\,,
    \item[\rm(b)] \label{item2} $\diag(F\hat\Theta F^\top)_{i} = \diag(F\hat\Theta F^\top)_{j}\,$, if $\hat{x}_i\,, \hat{x}_j\in(0,1)$.
    \end{enumerate}
\end{lem}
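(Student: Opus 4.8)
The plan is to compare the quantity $\diag(F\hat\Theta F^\top)_i = F_{i\cdot}\hat\Theta F_{i\cdot}^\top = \sum_{\ell=1}^k \hat\beta_\ell (F_{i\cdot}\hat u_\ell)^2$ with the gradient expression of Theorem \ref{thm:supgrad}. Writing $\hat\delta = \frac{1}{t-\hat\iota}\sum_{\ell=\hat\iota+1}^k \hat\lambda_\ell$, so that $\frac{t-\hat\iota}{\sum_{i=\hat\iota+1}^k\hat\lambda_i} = \frac{1}{\hat\delta}$, that formula reads $\frac{\partial}{\partial x_i}\Gamma_t(F(\hat x)) = \sum_{\ell=1}^{\hat\iota}\frac{1}{\hat\lambda_\ell}(F_{i\cdot}\hat u_\ell)^2 + \frac{1}{\hat\delta}\sum_{\ell=\hat\iota+1}^k (F_{i\cdot}\hat u_\ell)^2$. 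Comparing term-by-term with the definition \eqref{defbetaa} of $\hat\beta$, the two expressions agree except on the null-space block $\hat r<\ell\le k$, where $\hat\beta_\ell = (1+\epsilon)/\hat\delta$ rather than $1/\hat\delta$; hence, writing $p_i:=\frac{\partial}{\partial x_i}\Gamma_t(F(\hat x))$, I get $\diag(F\hat\Theta F^\top)_i = p_i + \frac{\epsilon}{\hat\delta}\sum_{\ell=\hat r+1}^k (F_{i\cdot}\hat u_\ell)^2$.

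The next step records that the extra term is supported off the support of $\hat x$. Since the $\hat u_\ell$ with $\ell>\hat r$ span the null space of $F(\hat x) = \sum_{m} \hat x_m F_{m\cdot}^\top F_{m\cdot}$, the identity $0 = \hat u_\ell^\top F(\hat x)\hat u_\ell = \sum_m \hat x_m (F_{m\cdot}\hat u_\ell)^2$ together with $\hat x\ge 0$ forces $F_{m\cdot}\hat u_\ell = 0$ whenever $\hat x_m>0$. Thus $\diag(F\hat\Theta F^\top)_m = p_m$ for every $m$ with $\hat x_m>0$. I would then invoke first-order optimality of $\hat x$ for the concave maximization \ref{DDGFact} over $\{\mathbf{e}^\top x = s,\, l\le x\le c\}$: using $p$ as the relevant subgradient (by the remark after Theorem \ref{thm:supgrad} the displayed formula is in any case a subgradient of $\Gamma_t$), there is a scalar $\tau$ with $p_i\ge\tau$ whenever $\hat x_i>l_i$ and $p_i\le\tau$ whenever $\hat x_i<c_i$, which I would justify through one-sided directional derivatives.

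These ingredients settle (b) and the easy part of (a). For (b), if $\hat x_i,\hat x_j\in(0,1)=(l_i,c_i)$ then both coordinates are interior and positive, so $\diag(F\hat\Theta F^\top)_i = p_i = \tau = p_j = \diag(F\hat\Theta F^\top)_j$. For (a) with $i,j\in\bar N$ and $\hat x_i>\hat x_j$: from $\hat x_i>\hat x_j\ge 0$ we get $\hat x_i>0$, hence $\diag(F\hat\Theta F^\top)_i = p_i$, and $\hat x_i>l_i=0$ gives $p_i\ge\tau$; since $\hat x_j<\hat x_i\le c_j=1$ we get $p_j\le\tau$. If in addition $\hat x_j>0$, then $\diag(F\hat\Theta F^\top)_j = p_j\le\tau\le p_i = \diag(F\hat\Theta F^\top)_i$, as required.

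The remaining case of (a), which I expect to be the main obstacle, is a free variable $j$ with $\hat x_j=0$: here $\diag(F\hat\Theta F^\top)_j = p_j + \frac{\epsilon}{\hat\delta}\sum_{\ell=\hat r+1}^k (F_{j\cdot}\hat u_\ell)^2$ can strictly exceed $p_j$, so the chain $p_j\le\tau\le p_i$ no longer immediately yields $\diag(F\hat\Theta F^\top)_j \le \diag(F\hat\Theta F^\top)_i$. The crux is therefore to control this null-space contribution at an optimal $\hat x$. My aim would be to show that for such a $j$ the term $\sum_{\ell=\hat r+1}^k (F_{j\cdot}\hat u_\ell)^2$ is inactive, by analyzing the feasible direction $e_j-e_i$ with $\hat x_i>0$: raising $x_j$ off its lower bound adds the direction $F_{j\cdot}$, and its null-space component would enlarge the range of $F(x)$ and increase the tail sum $\sum_{\ell>\hat\iota}\lambda_\ell(F(x))$, so optimality should preclude a nonzero component (or at least leave room, for suitable $\epsilon$, to absorb the extra term into the strict gap $p_i-p_j$). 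Carrying this out rigorously requires careful bookkeeping of how the grouping index $\hat\iota$ and the value $\hat\delta$ move along the perturbation, and of the interplay between the rank threshold $\hat r\ge t$ and the multiplier $\tau$; this is where the real work lies.
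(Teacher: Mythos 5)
Your handling of part (b) and of part (a) when $\hat{x}_j>0$ is sound in outline, but the case you yourself flag as ``the main obstacle'' --- part (a) with $j\in\bar N$ and $\hat{x}_j=0$ --- is a genuine gap and not a corner case: it is precisely the comparison needed in Theorem \ref{dualoptimallcfa}, where indices with $\hat{x}_i=1$ must be compared against indices with $\hat{x}_j=0$ to identify $P\cap\bar{N}$ with $\mathcal{I}_1$. Your plan for closing it --- showing that the null-space contribution $\sum_{\ell>\hat r}(F_{j\cdot}\hat u_\ell)^2$ must vanish at optimality, or absorbing the $\epsilon$-term into a strict gap $p_i-p_j$ --- does not obviously go through: first-order optimality along $\mathbf{e}_j-\mathbf{e}_i$ only yields $p_j\leq p_i$ (the eigenvalue born in the null space enters the tail average at rate $1/\hat\delta$, which is already accounted for inside $p_j$), and this is compatible with a nonzero null-space component of $F_{j\cdot}$ and with $p_j=p_i$, in which case $\diag(F\hat\Theta F^\top)_j=p_j+\frac{\epsilon}{\hat\delta}\sum_{\ell>\hat r}(F_{j\cdot}\hat u_\ell)^2$ could exceed $\diag(F\hat\Theta F^\top)_i$ for the arbitrary $\epsilon>0$ permitted in \eqref{defbetaa}. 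A secondary issue: when the technical condition of Theorem \ref{thm:supgrad} fails, $p$ is only one particular supergradient, and the multiplier $\tau$ in your KKT step is guaranteed to exist for \emph{some} supergradient, not necessarily for $p$.

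The paper's proof avoids your obstacle entirely and never passes through the gradient formula of Theorem \ref{thm:supgrad}. It invokes \cite[Proposition 11]{ChenFampaLeeGenScaling}, which gives the directional derivative of $\Gamma_t$ at $\hat x$ toward any feasible $\tilde x$ with $F(\tilde x)$ in the domain of $\Gamma_t$ as $(\tilde{x}-\hat{x})^\top\diag(F\hat\Theta F^\top)$ --- that is, with $\diag(F\hat\Theta F^\top)$ itself, $\epsilon$-term included, as the coefficient vector. By concavity, optimality of $\hat x$ is equivalent to this quantity being nonpositive for every such $\tilde x$, and both parts then follow from a two-coordinate swap: for (a), setting $\tilde x_{\hat\imath}:=\hat x_{\hat\jmath}+\varepsilon$ and $\tilde x_{\hat\jmath}:=\hat x_{\hat\imath}-\varepsilon$ with $0<\varepsilon<\hat x_{\hat\imath}-\hat x_{\hat\jmath}$ makes \emph{both} perturbed coordinates strictly positive, so $\rank(F(\tilde x))\geq\rank(F(\hat x))\geq t$ and no separate control of the null space is needed; the inner product factors as $(\hat x_{\hat\jmath}-\hat x_{\hat\imath}+\varepsilon)\bigl(\diag(F\hat\Theta F^\top)_{\hat\imath}-\diag(F\hat\Theta F^\top)_{\hat\jmath}\bigr)$, which is positive under the assumed inequality, a contradiction. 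If you want to repair your route, you essentially need that cited directional-derivative identity (or an equivalent statement that $\diag(F\hat\Theta F^\top)$ is the operative supergradient on the domain); this is exactly the ingredient your proposed bookkeeping of $\hat\iota$, $\hat\delta$, and the rank threshold would have to reconstruct.
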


\begin{proof}
Let $\tilde{x}$ be a feasible solution to the ``\emph{modified} \ref{DDGFact}'' (refereed to below simply as \ref{DDGFact}), such that $F(\tilde{x})$ is in the domain of $\Gamma_t$\,, i.e., $F(\tilde{x})\succeq 0$ with $\rank(F(\tilde{x})) \geq t$. From \cite[Proposition 11]{ChenFampaLeeGenScaling}, we have that  the directional derivative of $\Gamma_t$ at $\hat{x}$ in the direction $\tilde{x}-\hat{x}$ exists, and is given by 
    \[
    (\tilde{x}-\hat{x})^\top\textstyle\frac{\partial  \Gamma_t(F(\hat x))}{\partial  x} = (\tilde{x}-\hat{x})^\top\diag(F\hat\Theta F^\top).
    \]
Then,  because  \ref{DDGFact} is a convex optimization problem with a concave objective function $\Gamma_t$\,, we  conclude that $\hat{x}$ is an optimal solution to \ref{DDGFact} if and only if  $(\tilde{x}-\hat{x})^\top\diag(F\hat\Theta F^\top) \leq 0$ for every feasible solution $\tilde{x}$ to \ref{DDGFact}, such that $F(\tilde{x})$ is in the domain of $\Gamma_t$\,. 

\begin{enumerate}[wide, labelwidth=0pt, labelindent=0pt]
\item[\rm(a)] 
    Suppose  there exist  $\hat{\imath},\hat{\jmath}\in\bar{N}$, such that $\hat{x}_{\hat\imath} > \hat{x}_{\hat\jmath}$ and 
    $\diag(F\hat\Theta F^\top)_{\hat{\imath}} < \diag(F\hat\Theta F^\top)_{\hat{\jmath}}\,$. Let $0<\epsilon<\hat{x}_{\hat\imath} - \hat{x}_{\hat\jmath}$\,. Let $\tilde{x}$ be the feasible solution to \ref{DDGFact}, such that $\tilde{x}_\ell := \hat{x}_\ell$ for $\ell \in \bar{N}\setminus\{\hat{\imath},\hat{\jmath}\}$, $\tilde{x}_{\hat{\imath}} := \hat{x}_{\hat{\jmath}} +\epsilon$ and $\tilde{x}_{\hat{\jmath}} := \hat{x}_{\hat{\imath}} - \epsilon$. Note that $\tilde{x}_{\hat{\imath}}$ and $\tilde{x}_{\hat{\jmath}}$ are both positive, so $\rank(F(\tilde{x}))\geq\rank(F(\hat{x}))\geq t$, and $F(\tilde{x})$ is in the domain of $\Gamma_t$\,. We have
    \begin{align*}
        (\tilde{x}-\hat{x})^\top(\diag(F\hat\Theta F^\top))  &=(\tilde{x}_{\hat\imath}-\hat{x}_{\hat\imath})\diag(F\hat\Theta F^\top)_{\hat{\imath}} + (\tilde{x}_{\hat\jmath}-\hat{x}_{\hat\jmath})\diag(F\hat\Theta F^\top)_{\hat{\jmath}} \\
        &=(\hat{x}_{\hat\jmath}-\hat{x}_{\hat\imath} +\epsilon)(\diag(F\hat\Theta F^\top)_{\hat{\imath}}-\diag(F\hat\Theta F^\top)_{\hat{\jmath}})> 0,
    \end{align*}
    which contradicts the optimality of $\hat{x}$. The result in part (a) follows.
\medskip
    \item[\rm(b)]
    Suppose  there exist   $\hat{\imath},\hat{\jmath}\in\bar{N}$, such that $\hat{x}_{\hat\imath}, \hat{x}_{\hat\jmath}\in(0,1)$. Assume, without loss of generality that $\hat{x}_{\hat\imath}> \hat{x}_{\hat\jmath}$\,. From part (a), we have $\diag(F\hat\Theta F^\top)_{\hat{\imath}} \geq \diag(F\hat\Theta F^\top)_{\hat{\jmath}}$\,. Now, suppose  
    $\diag(F\hat\Theta F^\top)_{\hat{\imath}} > \diag(F\hat\Theta F^\top)_{\hat{\jmath}}$\,. Let $0<\epsilon<\min\{1-\hat{x}_{\hat{\imath}},\hat{x}_{\hat{\jmath}}\}$. Let $\tilde{x}$ be the  feasible solution to \ref{DDGFact} such that 
 $\tilde{x}_\ell := \hat{x}_\ell$ for $\ell \in \bar{N}\setminus\{\hat{\imath},\hat{\jmath}\}$, $\tilde{x}_{\hat{\imath}} := \hat{x}_{\hat{\imath}} + \epsilon$ and $\tilde{x}_{\hat{\jmath}} := \hat{x}_{\hat{\jmath}} - \epsilon$. As in part (a), $F(\tilde{x})$ is in the domain of $\Gamma_t$\,.
     We have
    \begin{align*}
        (\tilde{x}-\hat{x})^\top(\diag(F\hat\Theta F^\top)) 
        &=\epsilon(\diag(F\hat\Theta F^\top)_{\hat{\imath}}-\diag(F\hat\Theta F^\top)_{\hat{\jmath}})> 0,
    \end{align*}
    which contradicts the optimality of $\hat{x}$. The result in part (b) follows.  \qed  
    \end{enumerate} 
\end{proof}

\begin{thm}\label{dualoptimallcfa}
   Let $\hat{x}$ be an optimal  solution of \ref{DDGFact}, where 
   $0\leq x \leq \mathbf{e}$ is replaced with $l\leq x\leq c$, with $l\leq c\in\{0,1\}^n$, and there are no side constraints $Ax\leq b$. Then,  the dual solution $(\hat{\Theta},\upsilon^*,\nu^*,\tau^*)$ associated to $\hat{x}$ is an optimal solution  to its dual problem. 
\end{thm}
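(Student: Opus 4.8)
The plan is to show that the candidate $(\hat\Theta,\upsilon^*,\nu^*,\tau^*)$ is feasible for the dual problem (the modified \ref{DGFact}, obtained by replacing $0\le x\le\mathbf{e}$ with $l\le x\le c$ and dropping $\pi$) and attains the optimal dual value, which by strong duality between \ref{DDGFact} and \ref{DGFact} equals the optimal value of the modified \ref{DDGFact}, namely $\Gamma_t(F(\hat x))$, since $\hat x$ is optimal. Feasibility is immediate from the construction: the equality $\diag(F\hat\Theta F^\top)+\upsilon^*-\nu^*-\tau^*\mathbf{e}=0$ and $\upsilon^*,\nu^*\ge 0$ hold by design, and $\hat\Theta\succ 0$ because every $\hat\beta_\ell$ in \eqref{defbetaa} is strictly positive. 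For the objective value, recall that the modified \ref{DGFact} objective at this point is $-\sum_{\ell=k-t+1}^k\log\lambda_\ell(\hat\Theta)+\nu^{*\top}c-\upsilon^{*\top}l+\tau^*s-t$, and that the log-term equals $\Gamma_t(F(\hat x))$, since the $t$ smallest eigenvalues of $\hat\Theta$ are $\hat\beta_1\le\cdots\le\hat\beta_t$ and $-\sum_{\ell=1}^t\log\hat\beta_\ell=\Gamma_t(F(\hat x))$. Thus the whole argument reduces to showing that the residual gap $G:=-\upsilon^{*\top}l+\nu^{*\top}c+\tau^*s-t$, which is the optimal value of \ref{mingapproba}, is exactly $0$.

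Next I would analyze $G$ through \ref{mingapproba} and its LP dual. Writing $g:=\diag(F\hat\Theta F^\top)$, the dual of \ref{mingapproba} is the linear program $\max\{g^\top x-t:\ l\le x\le c,\ \mathbf{e}^\top x=s\}$; indeed the closed-form quadruple $(\upsilon^*,\nu^*,\tau^*,x^*)$ was already verified to satisfy the optimality conditions \eqref{kktgfact1f}, so $G=g^\top x^*-t$ with $x^*$ optimal for this LP, i.e.\ $g^\top x^*=\max\{g^\top x:\ l\le x\le c,\ \mathbf{e}^\top x=s\}$. On the other hand, $g^\top\hat x=\sum_i\hat x_i\,F_{i\cdot}\hat\Theta F_{i\cdot}^\top=\hat\Theta\bullet F(\hat x)=t$, by exactly the computation \eqref{fdottheta}. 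Since $\hat x$ is feasible for the LP, this already gives $G\ge 0$; it remains to prove the reverse inequality, namely that $g^\top x\le t$ for every LP-feasible $x$, equivalently that $\hat x$ is itself an optimal solution of $\max\{g^\top x:\ l\le x\le c,\ \mathbf{e}^\top x=s\}$.

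This last step is where Lemma \ref{lem:optimal_xhat_diagf} does the essential work, and it is the part I expect to be the main obstacle. The LP is a continuous ``knapsack''-type problem, for which a feasible point is optimal exactly when it is \emph{greedy-consistent} with the ordering of $g$: there is a threshold $\theta$ such that every free index $i\in\bar N$ with $g_i>\theta$ has $\hat x_i=c_i=1$, every free index with $g_i<\theta$ has $\hat x_i=l_i=0$, and fractional values occur only where $g_i=\theta$ (fixed indices are forced and play no role). Parts (a) and (b) of Lemma \ref{lem:optimal_xhat_diagf} supply precisely this structure: part (b) forces all fractional free components to share a common $g$-value, which I would take as $\theta$, and part (a), together with a short case analysis against a fractional component (or, when none is fractional, against the highest ``$0$''/lowest ``$1$'' component), rules out any free index with $g_i>\theta$ and $\hat x_i<1$, or with $g_i<\theta$ and $\hat x_i>0$. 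Hence $\hat x$ is greedy-consistent and therefore LP-optimal, so $g^\top x\le g^\top\hat x=t$ for all feasible $x$, giving $G=0$.

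Combining, $(\hat\Theta,\upsilon^*,\nu^*,\tau^*)$ is dual-feasible with objective value $\Gamma_t(F(\hat x))$, which equals the optimal value of the modified \ref{DDGFact}; hence it is optimal for the dual problem. The conceptual crux is that first-order optimality of $\hat x$ for \ref{DDGFact} only certifies $g^\top x\le t$ against directions that keep $F(x)$ in the domain of $\Gamma_t$ (rank at least $t$); Lemma \ref{lem:optimal_xhat_diagf} is exactly what upgrades this to optimality over the full box-and-sum polytope, including rank-deficient vertices, which is what the LP-duality reduction requires.
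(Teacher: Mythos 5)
Your proof is correct and follows essentially the same route as the paper's: both reduce the claim to showing the residual gap $-\upsilon^{*\top}l+\nu^{*\top}c+\tau^* s-t$ vanishes, both rely on the identity $\hat{x}^\top\diag(F\hat\Theta F^\top)=t$ from \eqref{fdottheta}, and both use Lemma \ref{lem:optimal_xhat_diagf} for the decisive combinatorial step. The only difference is packaging: the paper matches the sums over $P\cap\bar N$, $L_1$ and $\mathcal{I}_1$ term by term, whereas you recast that matching as the statement that $\hat x$ and $x^*$ are both optimal for the LP $\max\{\diag(F\hat\Theta F^\top)^\top x: \mathbf{e}^\top x=s,\ l\leq x\leq c\}$ --- a slightly cleaner way to organize the same argument.
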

\begin{proof} 
We must prove that
$-{\upsilon^*}^\top l + {\nu^*}^\top c    +\tau^* s - t\! =\! 0$. 
 So, it suffices to show~that
    \begin{equation}\label{toprovea}
    \textstyle\sum_{i \in P}  \left(\diag(F \hat\Theta F^\top)_{i} - \tau^*\right)c_{i} + \sum_{i \in Q}  \left(\diag(F \hat\Theta F^\top)_{i} -\tau^*\right)l_{i}  = t- \tau^* s\,,\end{equation}
 
 We note that in the case where $l\leq c\in\{0,1\}^n$, we can verify from the definition of $\varphi$ that  
$\textstyle\sum_{i\in P} c_{i} +  \sum_{i\in Q}  l_{i}=s$; and  $l_{\sigma(\varphi+1)}=0$, whenever $\varphi<n$. Then, we have $\tau^\star\textstyle\sum_{i\in P} c_{i} +  \tau^\star\sum_{i\in Q}  l_{i}=\tau^\star s$. 

We consider the partition of $N$ into three subsets:  $U_0 := \{i\in N: l_{i}=c_{i} = 0\}$, $L_1 := \{i\in N: l_{i} =c_{i}= 1\}$, and  $\bar N := N\setminus(L_1\cup U_0)=\{i\in N: l_{i}=0, c_{i} = 1\}$. We  note that  $L_1\subseteq P\cup Q$ because $l_{\sigma(\varphi+1)}=0$, whenever $\varphi<n$. Then,
       \begin{align*}
           &\textstyle\sum_{i \in P}  \diag(F \hat\Theta F^\top)_{i} c_{i} + \textstyle\sum_{i \in Q}  \diag(F \hat\Theta F^\top)_{i} l_{i}\\ 
           &=\textstyle\sum_{i \in P\cap\bar{N} }\!\diag(F \hat\Theta F^\top)_{i} c_{i} \!+\! \sum_{i \in P\cap L_1}\! \!\!\diag(F \hat\Theta F^\top)_{i} c_{i}  \!+\!  \sum_{i\in Q \cap L_1}\!\!\!\diag(F \hat\Theta F^\top)_{i} l_{i}
           \\           
           &=\textstyle\sum_{i \in P\cap\bar{N}} \diag(F \hat\Theta F^\top)_{i} + \sum_{i \in L_1}  \diag(F \hat\Theta F^\top)_{i}\,.
       \end{align*}
       Also, from \eqref{fdottheta}, we have that $\hat{x}^\top \diag(F \hat\Theta F^\top)=t$. Then, to show \eqref{toprovea}, it suffices to show that 
       \begin{equation}\label{toprove}
       \textstyle\sum_{i \in P\cap\bar{N}}  \diag(F \hat\Theta F^\top)_{i} + \sum_{i \in L_1}  \diag(F \hat\Theta F^\top)_{i}= \hat{x}^\top \diag(F \hat\Theta F^\top).
       \end{equation}
       We can also  verify that 
       \[
          s= \textstyle\sum_{i\in P} c_{i} +  \sum_{i\in Q}  l_{i}
          = \textstyle\sum_{i \in P\cap\bar{N} } 1 +   \sum_{i\in L_1}1=|P\cap\bar{N}| + |L_1|.
       \]
       Let
       $\mathcal{I}_1:=\{i\in \bar N: \hat{x}_{i}=1\}$. If   $\hat x \in \{0,1\}^n$,  then $|\mathcal{I}_1| = s - |L_1|$ and $|P\cap \bar{N}| = |\mathcal{I}_1|$.  So, from Lemma \ref{lem:optimal_xhat_diagf}, part (a),  and the ordering defined by $\sigma$, we have that $P\cap \bar{N} = \mathcal{I}_1$\,, and \eqref{toprove}  follows.
        Next, suppose that $\hat{x} \notin \{0,1\}^n$. Let $\mathcal{I}_f:=\{i\in \bar{N}: \hat{x}_{i}\in(0,1)\}$. Note that $\sum_{i\in\mathcal{I}_f}\hat{x}_i=s-|\mathcal{I}_1|-|L_1|$. 
    Let $\hat{d}:= \diag(F\hat{\Theta}F^\top)_{i}$\,, for every $i \in \mathcal{I}_f$ (this is well defined, due to  Lemma \ref{lem:optimal_xhat_diagf}, part (b)).
    Then, 
   \[
   \hat{x}^\top \diag(F \hat\Theta F^\top)
        =  \textstyle\sum_{i \in \mathcal{I}_1}  \diag(F \hat\Theta F^\top)_{i} + (s-|\mathcal{I}_1| - |L_1|) \hat{d} + \textstyle\sum_{i \in L_1}  \diag(F \hat\Theta F^\top)_{i}\,.
\]
 Note that  $|\mathcal{I}_f|>s-|\mathcal{I}_1| - |L_1|$, so $|P\cap\bar{N}|<|\mathcal{I}_1| + |\mathcal{I}_f|$. Also,   from Lemma \ref{lem:optimal_xhat_diagf}, part (a), we see  that $\diag(F\hat{\Theta}F^\top)_i \geq \hat{d}$ for all $i \in \mathcal{I}_1$\,. Then, we have
   \begin{align*}
          &\textstyle\sum_{i \in P\cap\bar{N}}  \diag(F \hat\Theta F^\top)_{i} + \sum_{i \in L_1}  \diag(F \hat\Theta F^\top)_{i} \\                   
            &\quad = \textstyle\sum_{i \in \mathcal{I}_1} \diag(F \hat\Theta F^\top)_{i} + (s-|\mathcal{I}_1| - |L_1|)\hat{d} + \sum_{i \in L_1}  \diag(F \hat\Theta F^\top)_{i}~.
       \end{align*}   
       The result follows.
     \qed
\end{proof}


\section{Experiments}\label{sec:exp}

We will discuss experiments that we carried out for \ref{GMESP} and \ref{CGMESP}. For all instances, we use a benchmark positive-definite covariance matrix $C$ of dimension $n=63$,  originally  
obtained from J. Zidek (University of British Columbia), coming from an application for re-designing an environmental monitoring network;
see \cite{Guttorp-Le-Sampson-Zidek1993} and \cite{HLW}. This  matrix has been used extensively in testing and developing algorithms for MESP; see \cite{KLQ,LeeConstrained,AFLW_Using,LeeWilliamsILP,HLW,AnstreicherLee_Masked,BurerLee,Anstreicher_BQP_entropy,Kurt_linx,Mixing,ChenFampaLee_Fact}.   For testing \ref{CGMESP}, we utilized ten  side constraints  $a_i^\top x\leq b_i$, for $i=1,\ldots,10$.  We have generated them randomly.  The  left-hand side of constraint $i$ is given by a uniformly-distributed random vector $a_i$ with integer components between $0$ and $5$. The right-hand side of the constraints is selected so that, for
every $s$ considered in the experiment, the best known solution 
of the instance of \ref{GMESP} is violated by
at least one constraint. 

We ran our experiments on ``zebratoo", a
32-core machine (running Windows Server 2022 Standard):
two Intel Xeon Gold 6444Y processors running at 3.60GHz, with 16 cores each, and 128 GB of memory.
We coded our  algorithms   in \texttt{Julia} v.1.9.0. To solve the convex relaxation \ref{DDGFact}, we used \texttt{Knitro}  v0.13.2, using \texttt{CONVEX = true},
\texttt{FEASTOL} = $10^{-8}$,
\texttt{OPTTOL} = $10^{-8}$, \texttt{ALGORITHM = 1} (Interior/Direct algorithm), 
\texttt{HESSOPT = 3} ((dense) Quasi-Newton SR1).\footnote{\texttt{HESSOPT = 2} ((dense) BFGS) might seem like the more natural choice for a convex problem, but we observed better behavior with the symmetric rank-one update, which is not without precedent; see \cite{byrd}.}
To solve \ref{GMESP} and \ref{CGMESP}, we adapted  the B\&B algorithm in  \texttt{Juniper} (Jump Non linear Integer Program solver) \cite{juniper} (using the \texttt{MostInfeasible}  branching rule,
and $10^{-5}$ as the tolerance to consider a value as integer). For the spectral bound for \ref{CGMESP}, we used \texttt{LBFGSB.jl}, a  \texttt{Julia} wrapper for the well-known \texttt{L-BFGS-B code} (the limited-memory bound-constrained BFGS algorithm; see \cite{LBFGSB}), using \texttt{factr} = $10^7$, \texttt{pgtol} = $10^{-5}$, \texttt{maxfun} = $15000$, \texttt{maxiter} = $15000$. We note that even though the calculation of the  spectral bound for \ref{CGMESP} requires solving a non-smooth optimization problem, it is well known that BFGS approaches often work quite well, regardless
(see \cite{nonsmoothBFGS}, and in particular \cite[Sec. 6]{LeeConstrained}). 

\subsection{Lower bounds}\label{sec:LB}

To get an idea of the performance of upper bounds,
we present gaps to good lower bounds.
For good lower bounds for \ref{GMESP}, we carried out an appropriate local search, in the spirit of \cite[Sec. 4]{KLQ}, starting 
from various good feasible solutions. 
Our local search is classical: Starting from some $S$ with $|S|=s$, 
we iteratively replace $S$ with $S+j-i$
when $\prod_{\ell=1}^t \lambda_{\ell} (C[S+j-i,S+j-i])
> \prod_{\ell=1}^t \lambda_{\ell} (C[S,S])$. 
We return at the end $x(S)$, the characteristic vector of $S$. 

We have three methods for generating initial solutions for the local search. 
\begin{itemize}
\item \emph{Rounding a continuous solution.}
Let $\lambda_\ell(C)$ be the $\ell$-th greatest eigenvalue of $C$, and  $u_\ell$ be the corresponding eigenvector, normalized to have Euclidean length 1.
We define $\bar{x}\in \mathbb{R}^n$
by $\bar{x}_j:= \sum_{\ell=1}^t u_{\ell j}^2$\,. It is easy to check 
(similar to \cite[Sec. 3]{LeeConstrained})
that $0\leq \bar{x}\leq \mathbf{e}$ and $\mathbf{e}^\top \bar{x}= t$. 
Next, we simply choose $S$ to comprise the indices $j$ corresponding to the $s$ 
biggest $\bar{x}_j$\,;
we note that this rounding method can be
adapted to \ref{CGMESP}, by instead solving
a small integer linear optimization problem (see \cite[Sec. 4]{LeeConstrained}).
\item \emph{Greedy.}
Starting from $S:=\emptyset$,
we identify the element $j\in N\setminus S$ that maximizes the product of the 
$\min \{t, |S|+1\}$ 
greatest eigenvalues of $C[S+j,S+j]$. 
We let $S:=S+j$, and we repeat while $|S|<s$.
\item \emph{Dual greedy.}
Starting from $S:=N$,
we identify the element $j\in S$ that maximizes the product of the 
$t$ 
greatest eigenvalues of $C[S-j,S-j]$. 
We let $S:=S-j$, and we repeat while $|S|>s$.
\end{itemize} 

To generate  lower bounds for \ref{CGMESP}, we  apply a rounding heuristic, using a similar idea to  \cite[Sec. 4]{LeeConstrained}, but considering the continuous solution obtained by solving  \ref{DDGFact}. More specifically, we obtain the optimal solution $\hat{x}$ of \ref{DDGFact}, and then solve the  integer linear-optimization problem $\max\{\hat{x}^\top x: \mathbf{e}^\top x=s, Ax\leq b, x\in\{0,1\}^n\}$.

\subsection{Behavior of upper bounds}
 
To analyze the generalized factorization bound for \ref{GMESP} and \ref{CGMESP} and compare it to the spectral bound, we conducted two  experiments using our covariance matrix with $n=63$, and ten randomly-generated side constraints for \ref{CGMESP}. 

In the first experiment, for each integer $\kappa$ 
from $0$ to $3$,  we consider the  instances obtained when we vary $s$ from $\kappa +1$ to $61$, and set $t:=s\!-\!\kappa$. 

In  Figures \ref{fig:varying_k} and \ref{fig:varying_kc}, we depict for each $\kappa$,  the gaps given by the difference between the lower bounds for \ref{GMESP} and \ref{CGMESP}, respectively,  computed as described in \S\ref{sec:LB}, and the   generalized factorization bound (``$\Gamma$") and the spectral bound (``$\mathcal{S}$") for each pair $(s,t:=s-\kappa)$. 
 These so-called ``absolute gaps'' are presented, as is typical in the literature for such problems having a log objective function, which does not always take on positive values.
In Figure \ref{fig:varying_k}, we also depict  the upper bound on the 
gap for the generalized factorization bound  (``ub on $\Gamma$") determined in Theorem \ref{thm:almost_dominates}, specifically given by the gap for the spectral bound added to $t\log(s/t)$. When $\kappa=0$ (instances of MESP),  $t\log(s/t)$ is  zero,  confirming that the generalized factorization bound dominates the spectral bound, as established in \cite{ChenFampaLee_Fact}. 

Observing the plots in Figure \ref{fig:varying_k}, we see that when $\kappa$ increases,  the generalized factorization bound becomes weaker and gets worse than the spectral bound when $s$ and $t$ get large enough.   Nevertheless, the generalized factorization bound is still much stronger than the spectral bound for most of the instances considered, and we see that the upper bound on it given in Theorem \ref{thm:almost_dominates} is, in general,  very loose, especially  for $s\in[10,20]$. The variation of the gaps for generalized factorization bounds for the different values of $s$ is very small when compared to the spectral bound, showing its robustness. Our previous experience for MESP and now 
for \ref{GMESP} (see Table \ref{tab:BB}) has been that
root gaps of less than one (resp., greater than two) indicate that the 
instance is likely solvable (resp., not solvable) by B\&B
in reasonable time.

\begin{figure}[!ht]
\captionsetup[subfigure]{aboveskip=0pt,belowskip=8pt}
\centering
\begin{subfigure}{0.49\textwidth}
    \includegraphics[width=\textwidth]{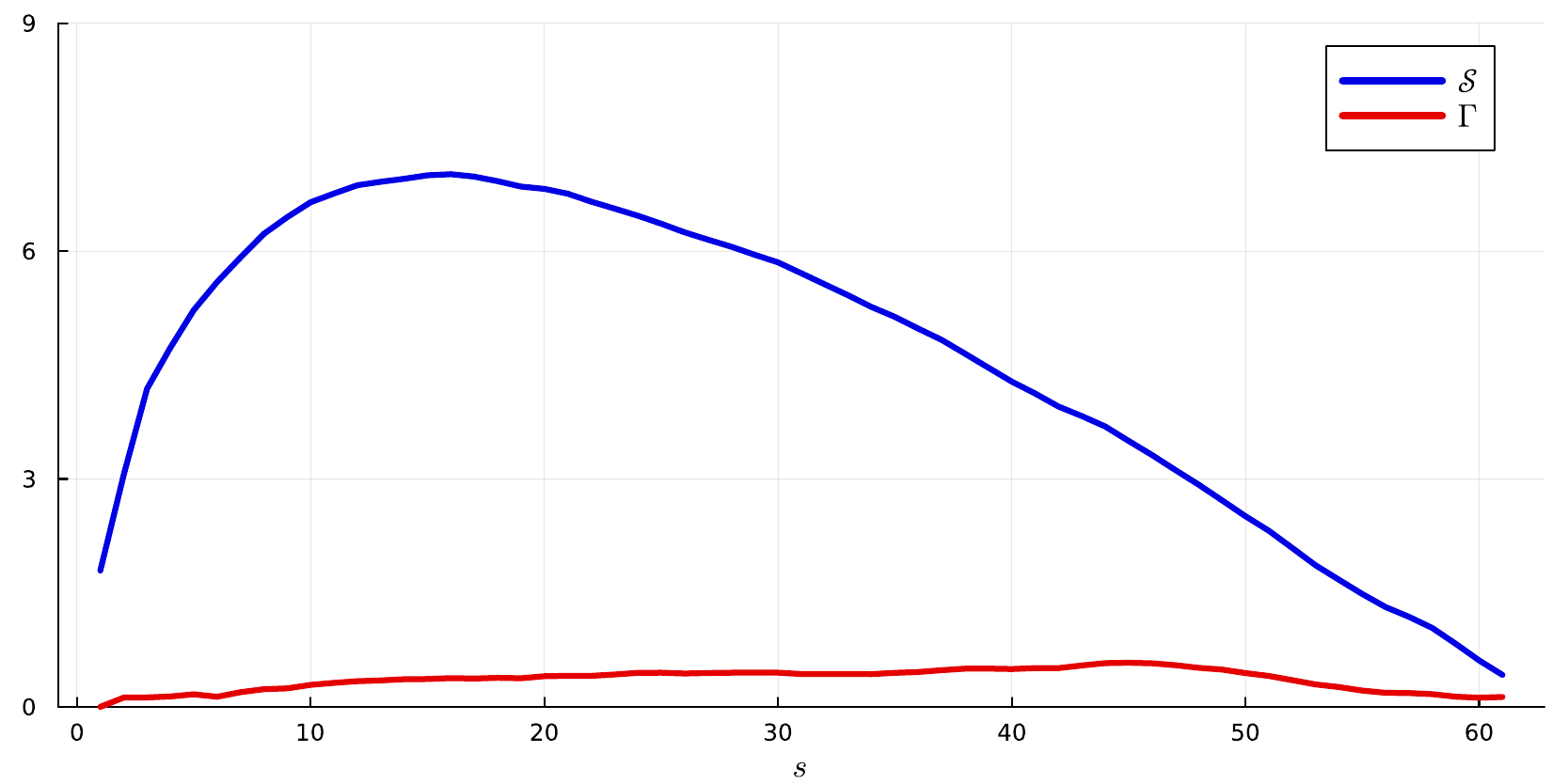}
    \caption{$\kappa=0$}
    \label{fig:k0}
\end{subfigure}
\hfill
\begin{subfigure}{0.49\textwidth}
    \includegraphics[width=\textwidth]{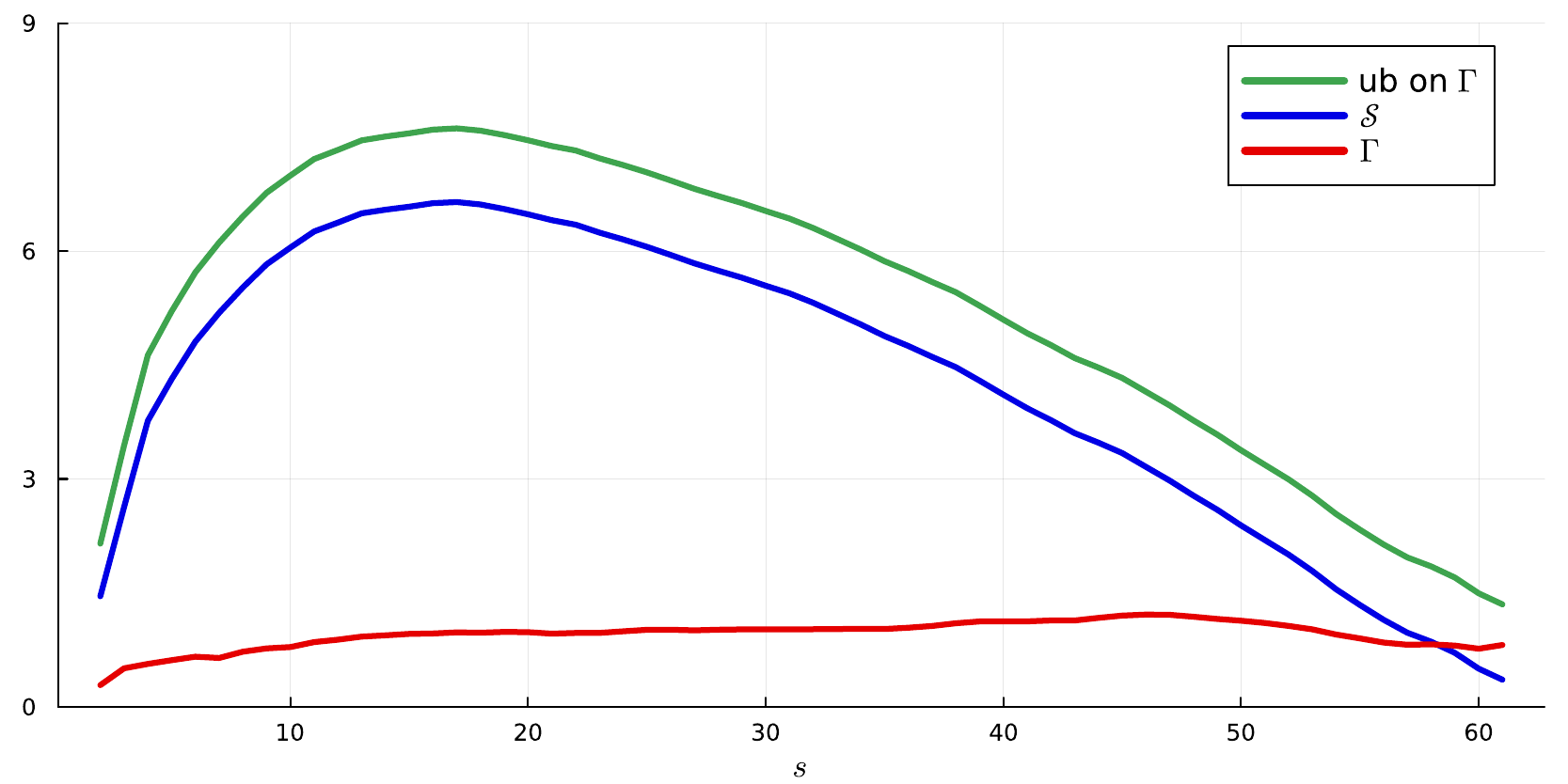}
    \caption{$\kappa=1$}
    \label{fig:k1}
\end{subfigure}
\hfill
\begin{subfigure}{0.49\textwidth}
    \includegraphics[width=\textwidth]{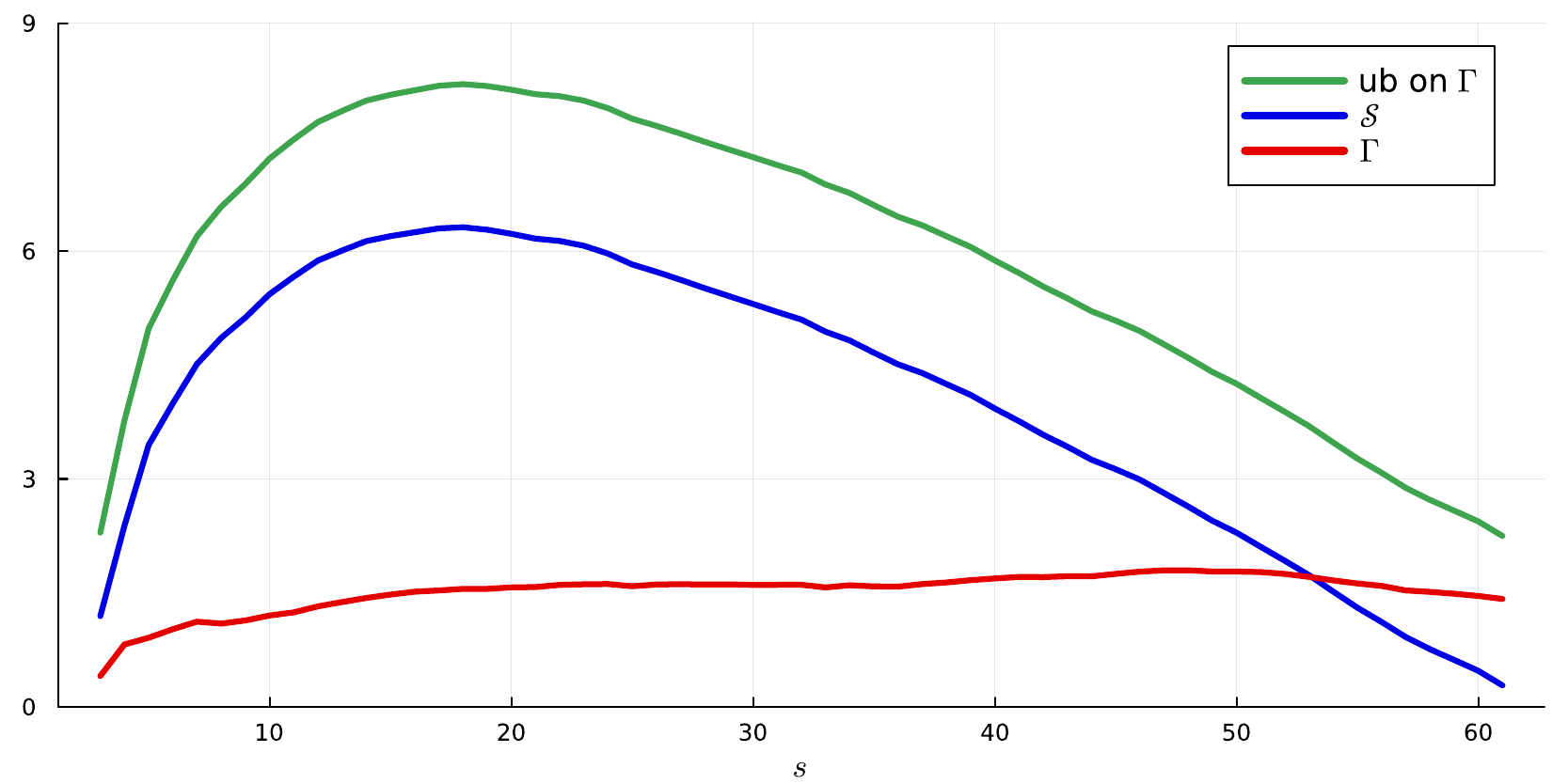}
    \caption{$\kappa=2$}
    \label{fig:k2}
\end{subfigure}
\hfill
\begin{subfigure}{0.49\textwidth}
    \includegraphics[width=\textwidth]{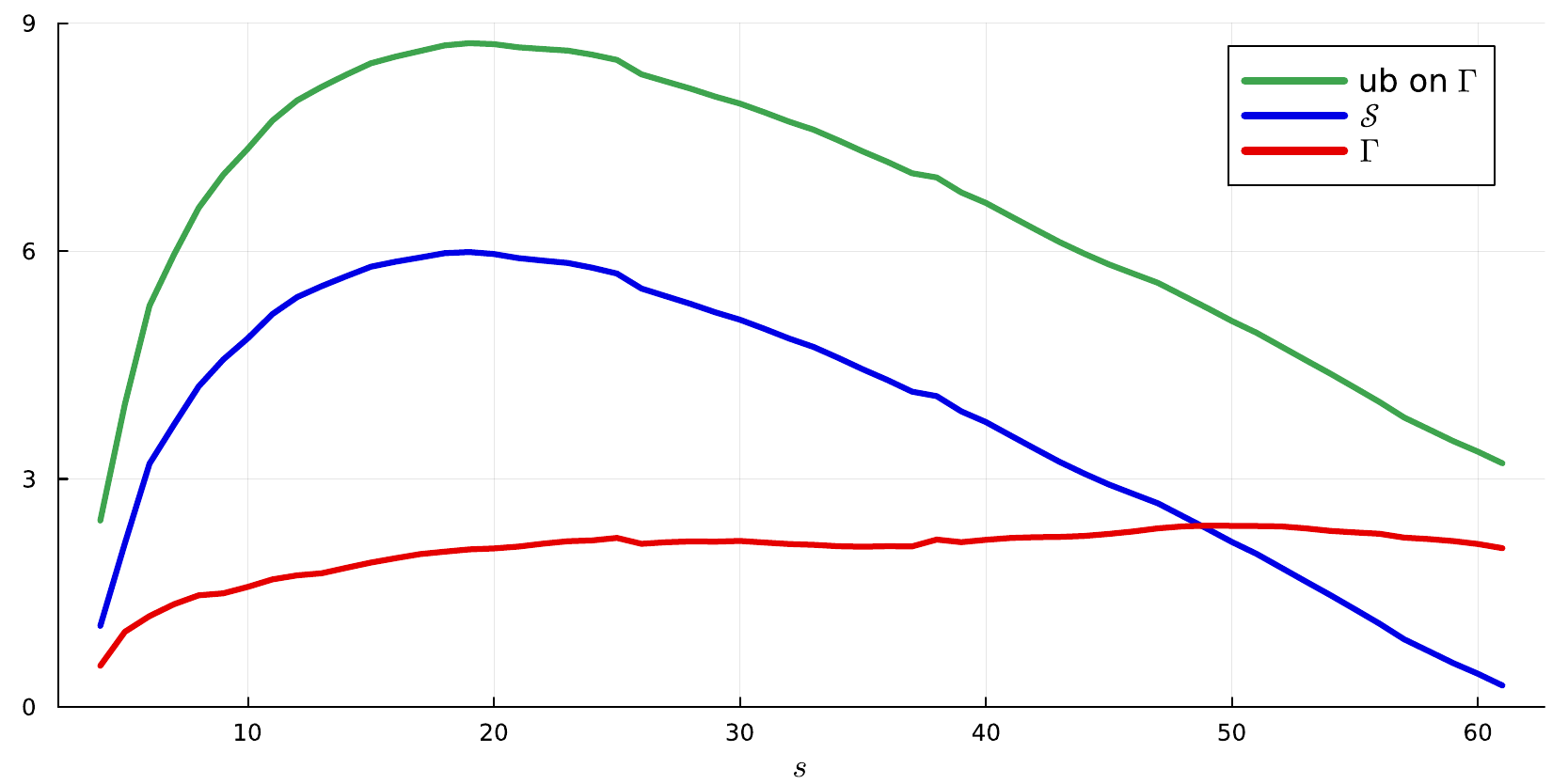}
    \caption{$\kappa=3$}
    \label{fig:k3}
\end{subfigure}
        
\caption{Gaps for \ref{GMESP},  varying $t=s-\kappa$ ($n=63$)}
\label{fig:varying_k}
\end{figure}

In Figure \ref{fig:varying_kc}, we observe that the gaps
for the generalized factorization bound are generally much worse
than for unconstrained instances. For the spectral bound, this is
not generally so pronounced. But the 
spectral-bound gaps do not tend to zero, as $s$ tends toward $n$,
like they do for the unconstrained instances. Because of this,
we do not see that there is a threshold for $s$,
beyond which the spectral bound dominates the 
generalized factorization bound. 
We can also observe that the
behavior of the bounds for constrained instances, as we vary $s$, is much more irregular than for unconstrained instances; this
is due to the fact that the constraints are changing, as we vary $s$, for constrained instances, and also probably because some of  the heuristically-generated lower bounds are quite poor. 

\begin{figure}[!ht]
\captionsetup[subfigure]{aboveskip=0pt,belowskip=8pt}
\centering
\begin{subfigure}{0.49\textwidth}
    \includegraphics[width=\textwidth]{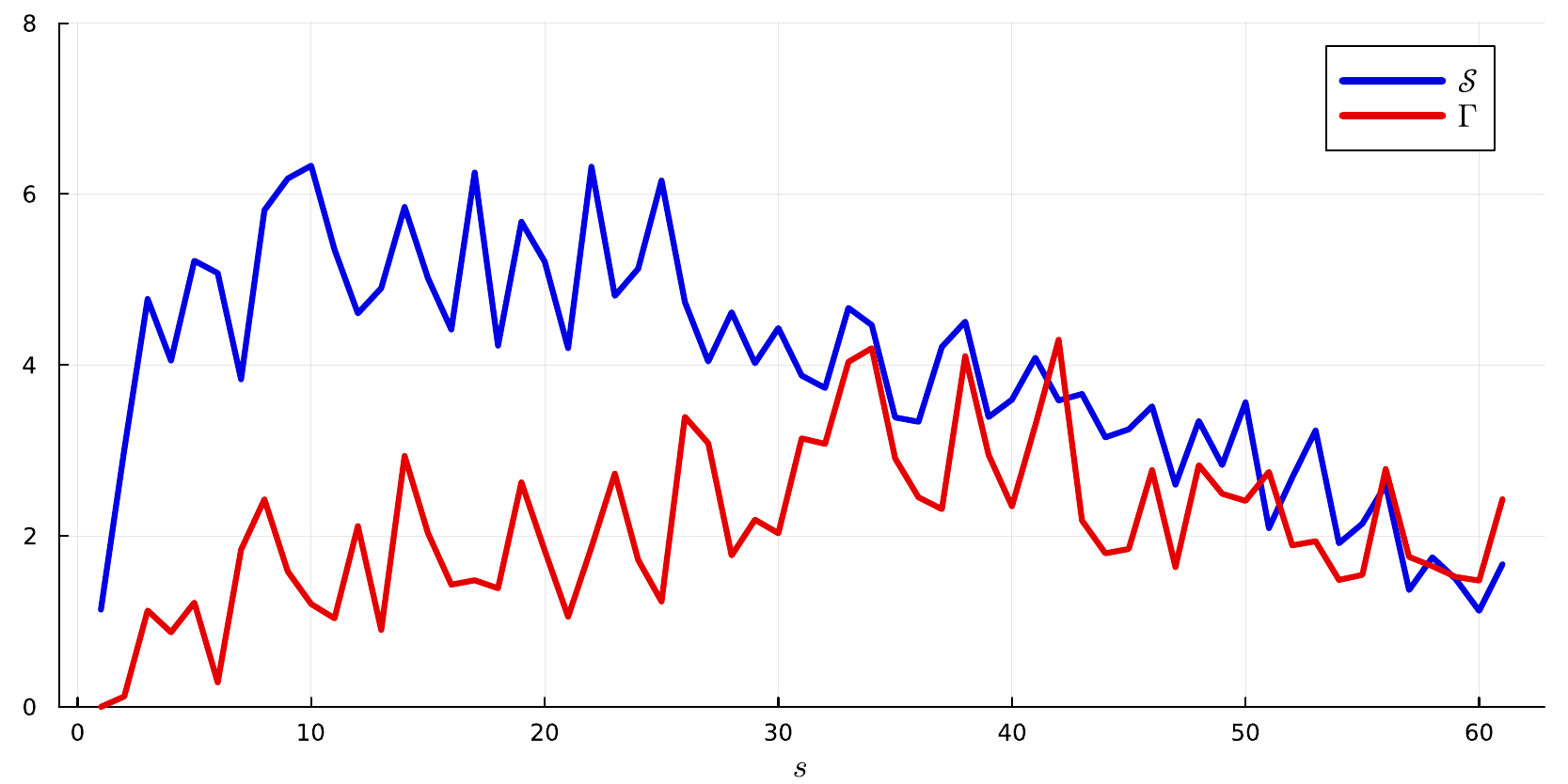}
    \caption{$\kappa=0$}
    \label{fig:k0c}
\end{subfigure}
\hfill
\begin{subfigure}{0.49\textwidth}
    \includegraphics[width=\textwidth]{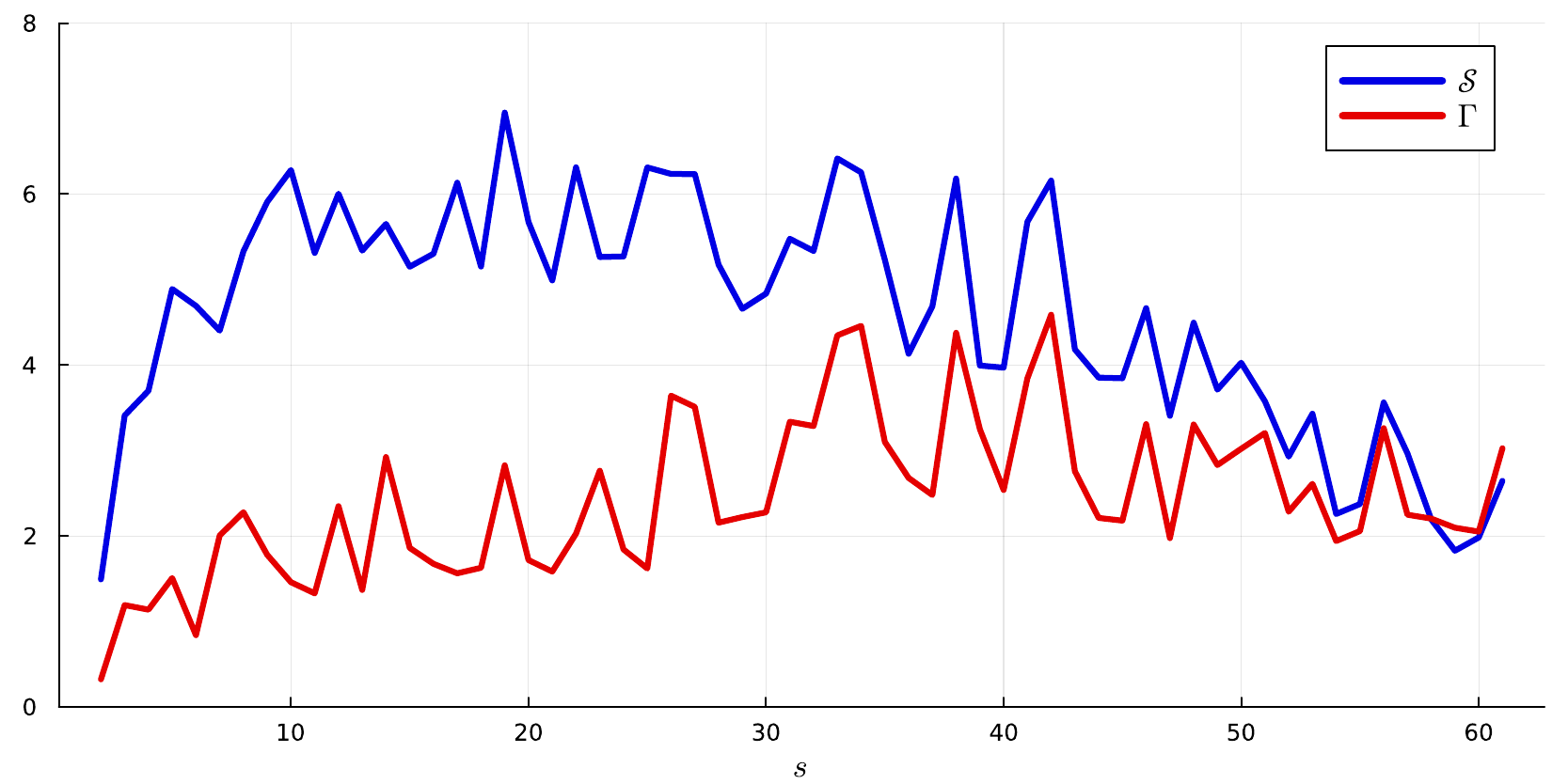}
    \caption{$\kappa=1$}
    \label{fig:k1c}
\end{subfigure}
\hfill
\begin{subfigure}{0.49\textwidth}
    \includegraphics[width=\textwidth]{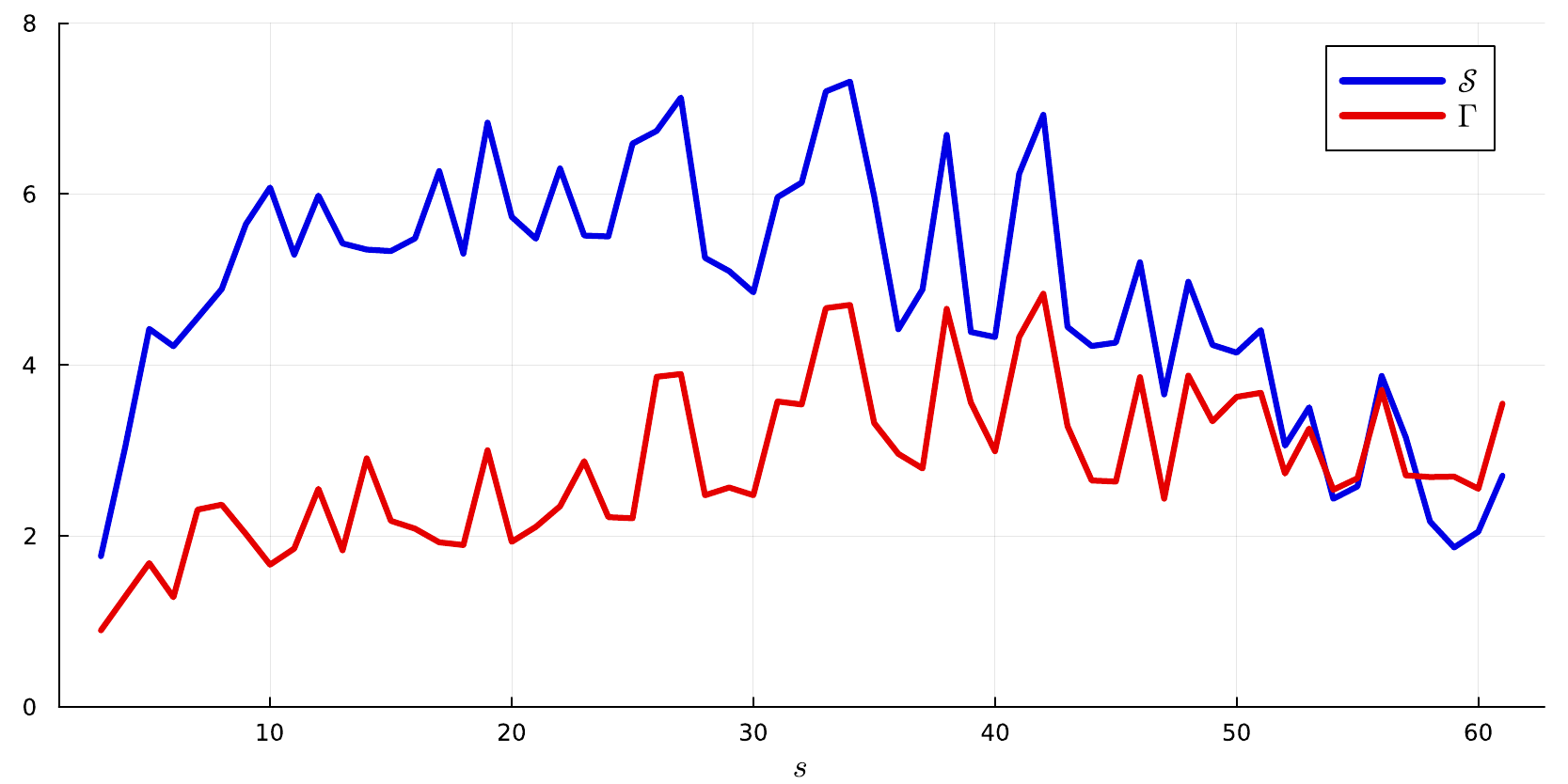}
    \caption{$\kappa=2$}
    \label{fig:k2c}
\end{subfigure}
\hfill
\begin{subfigure}{0.49\textwidth}
    \includegraphics[width=\textwidth]{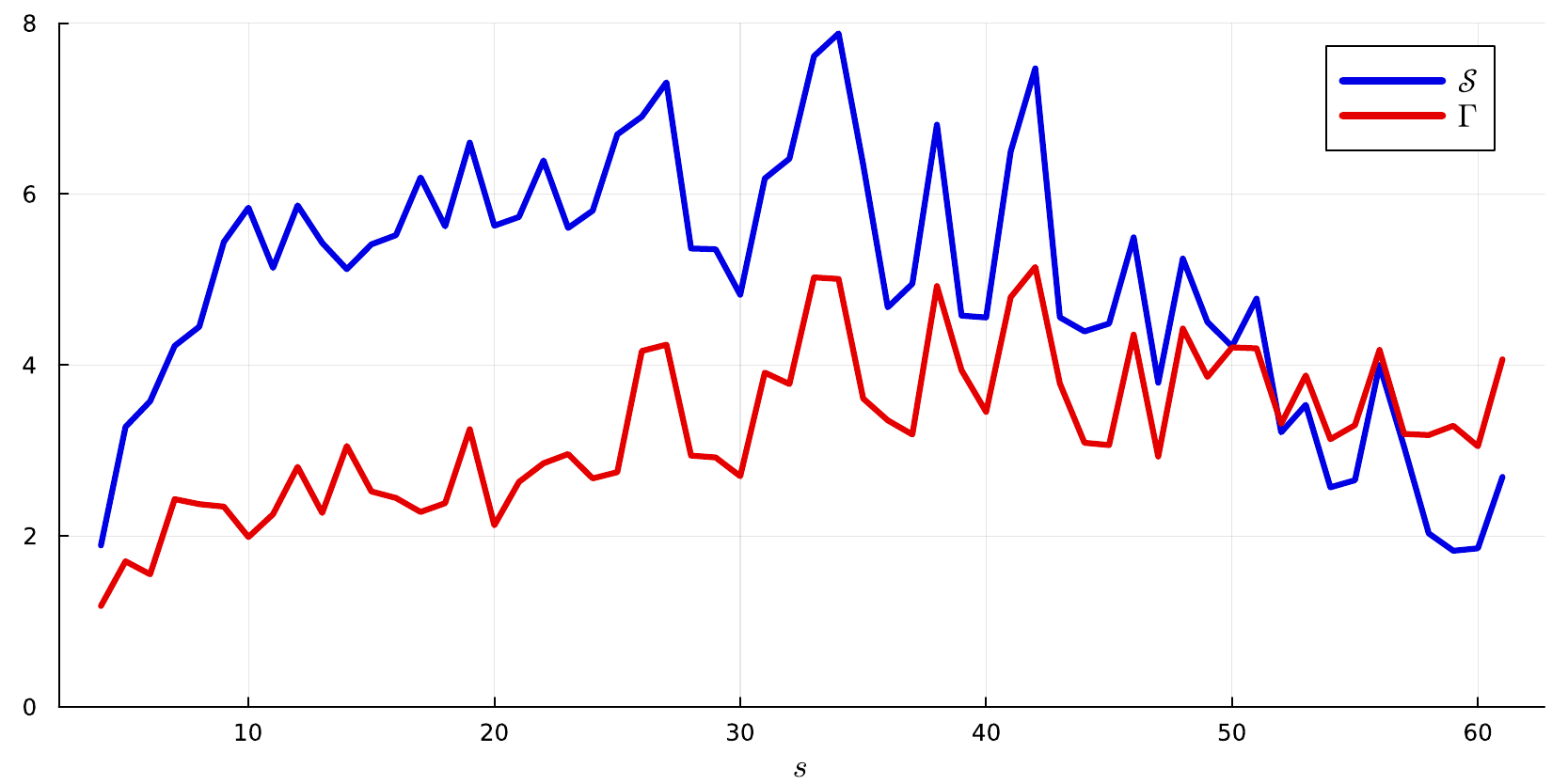}
    \caption{$\kappa=3$}
    \label{fig:k3c}
\end{subfigure}
        
\caption{Gaps for \ref{CGMESP},  varying $t=s-\kappa$ ($n=63$, $m=10$)}
\label{fig:varying_kc}
\end{figure}

In the second experiment, for $s=10, 20, 40$,   we consider the  instances obtained when we vary $t$ from $1$ to $s$. 
Similarly to what we show in Figures \ref{fig:varying_k} and \ref{fig:varying_kc}, in  Figure \ref{fig:varying_t}, we show the gaps  for each instance, and the upper bound on the gap corresponding to the generalized factorization bound for \ref{GMESP}.

\begin{figure}[!ht]
\captionsetup[subfigure]{aboveskip=0pt,belowskip=8pt}
\centering
\begin{subfigure}{0.49\textwidth}
    \includegraphics[width=\textwidth]{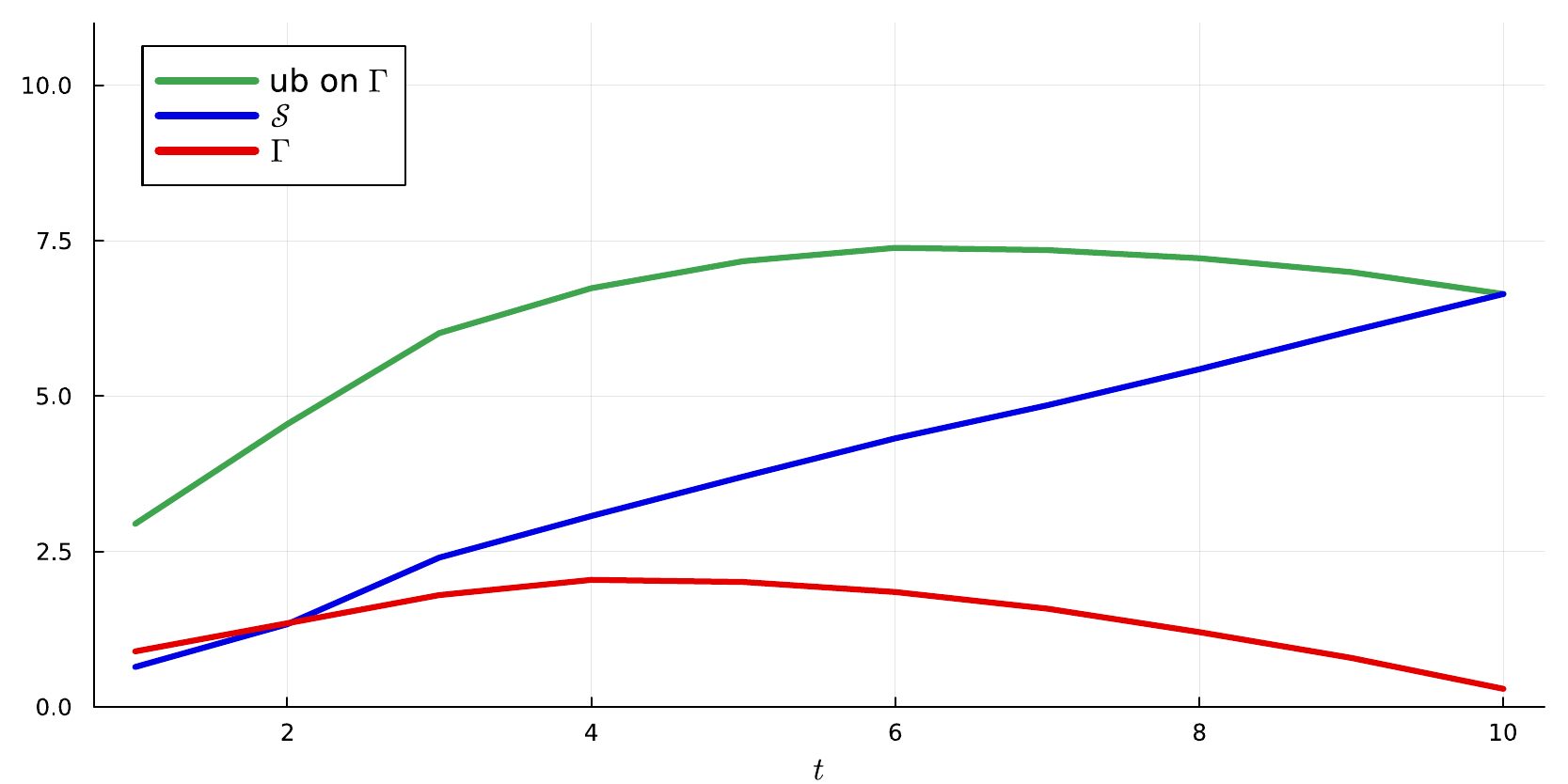}
    \vspace{-0.1in}
    \caption{\ref{GMESP}, $s=10$}
    \label{fig:s10}
\end{subfigure}
\hfill
\begin{subfigure}{0.49\textwidth}
    \includegraphics[width=\textwidth]{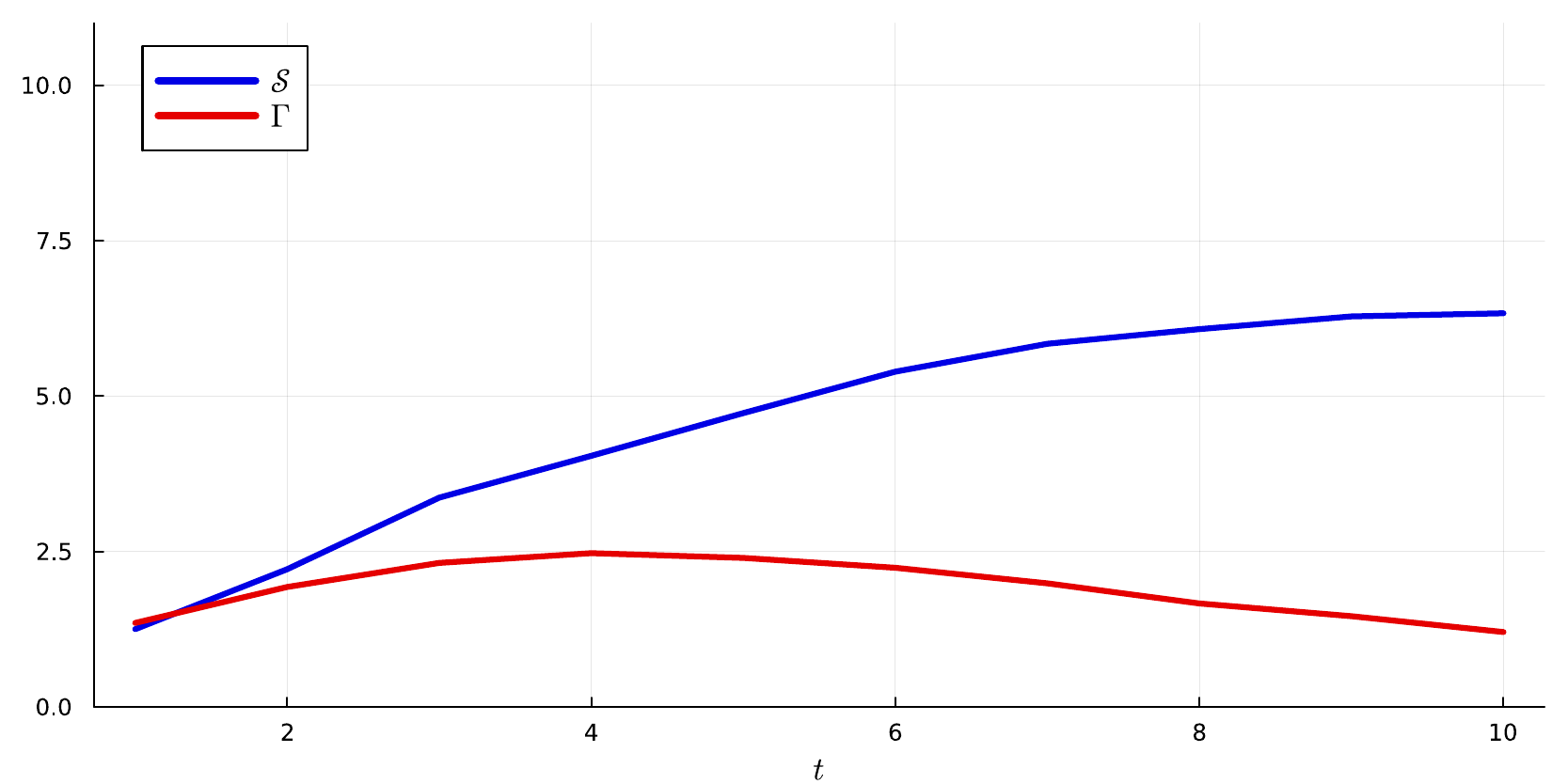}
    \caption{\ref{CGMESP}, $m=10$, $s=10$}
    \label{fig:s10c}
\end{subfigure}

\begin{subfigure}{0.49\textwidth}
    \includegraphics[width=\textwidth]{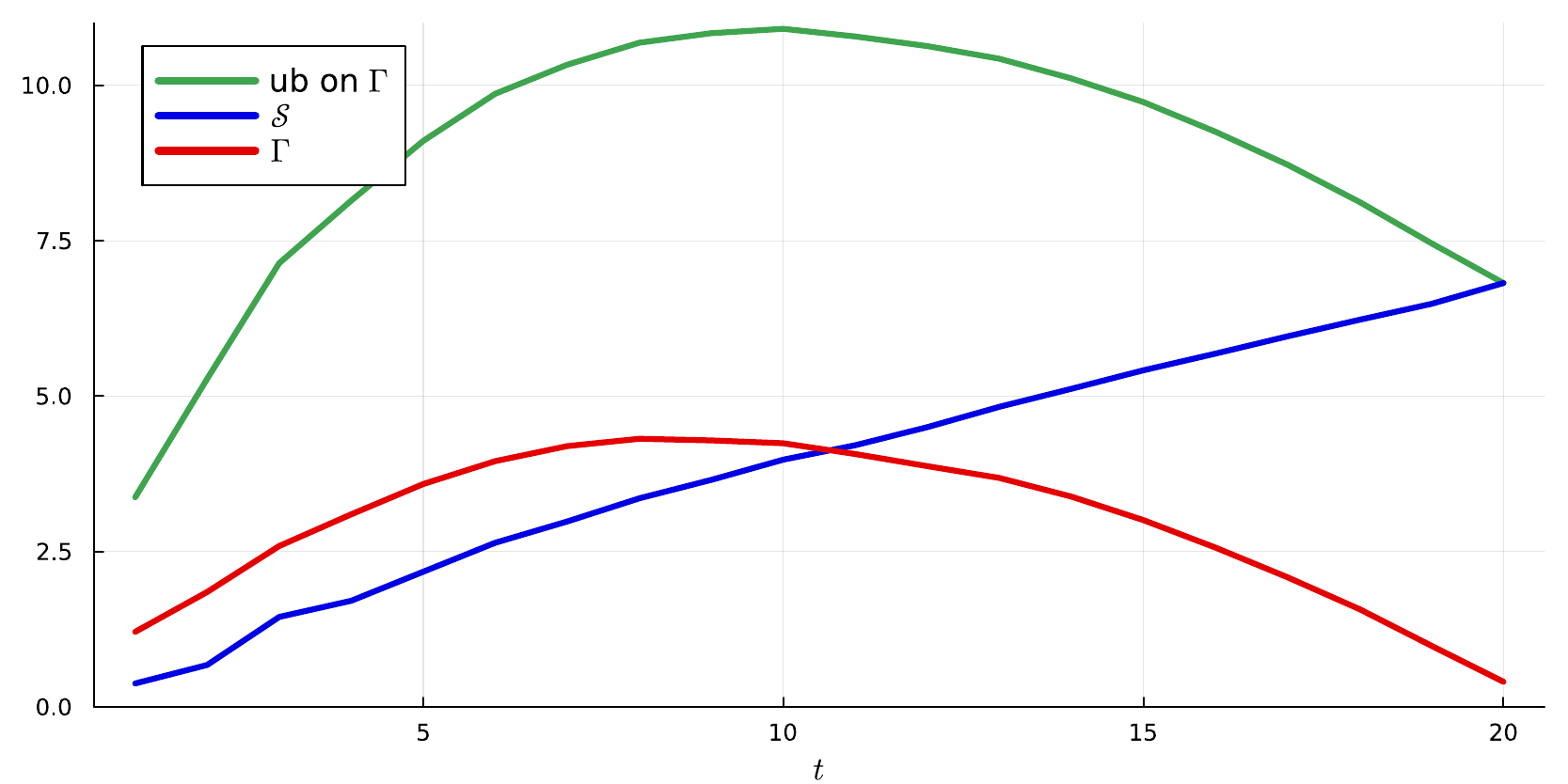}
    \caption{\ref{GMESP}, $s=20$}
    \label{fig:s20}
\end{subfigure}
\hfill
\begin{subfigure}{0.49\textwidth}
    \includegraphics[width=\textwidth]{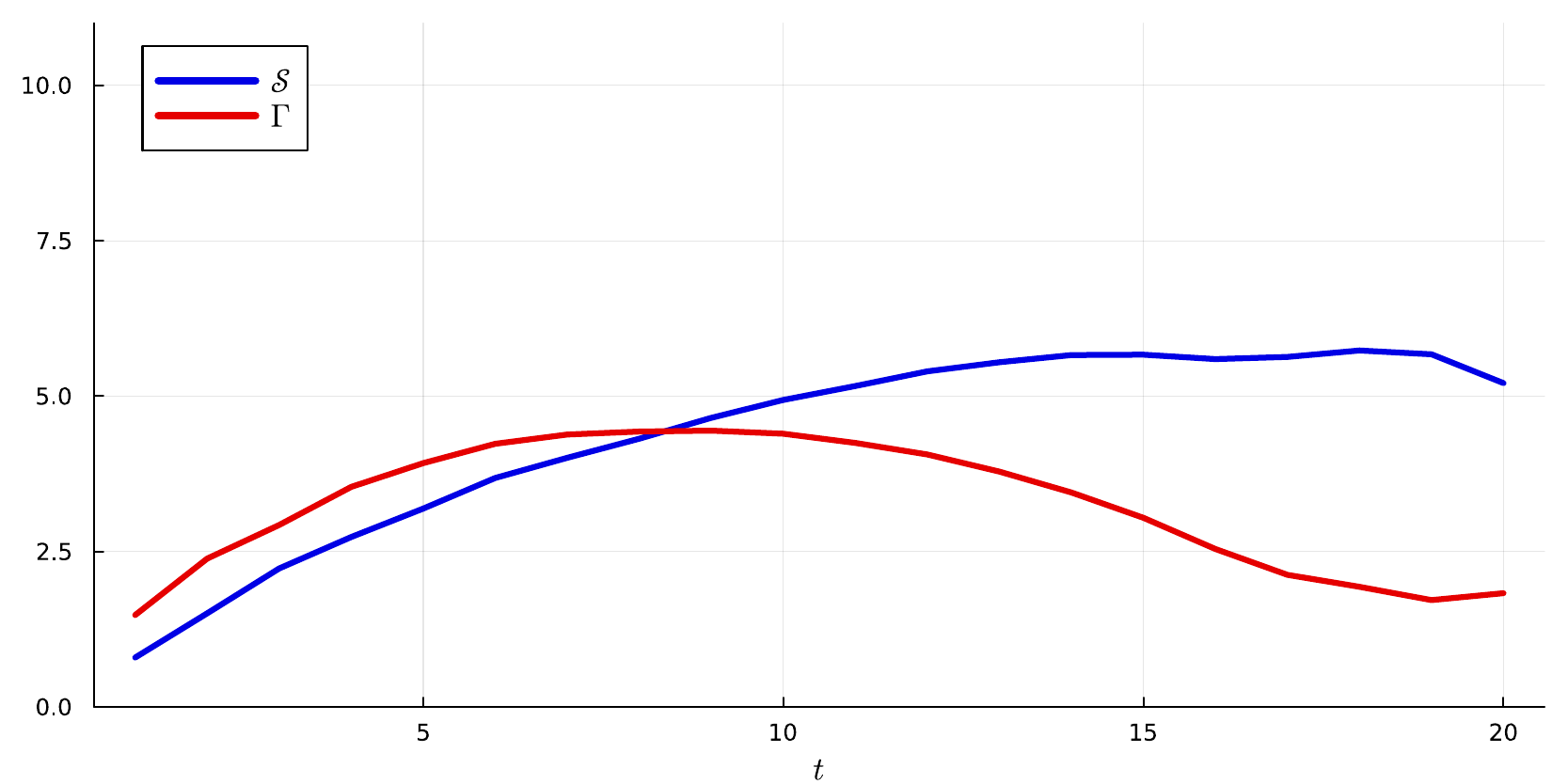}
    \caption{\ref{CGMESP}, $m=10$, $s=20$}
    \label{fig:s20c}
\end{subfigure}

\begin{subfigure}{0.49\textwidth}
    \includegraphics[width=\textwidth]{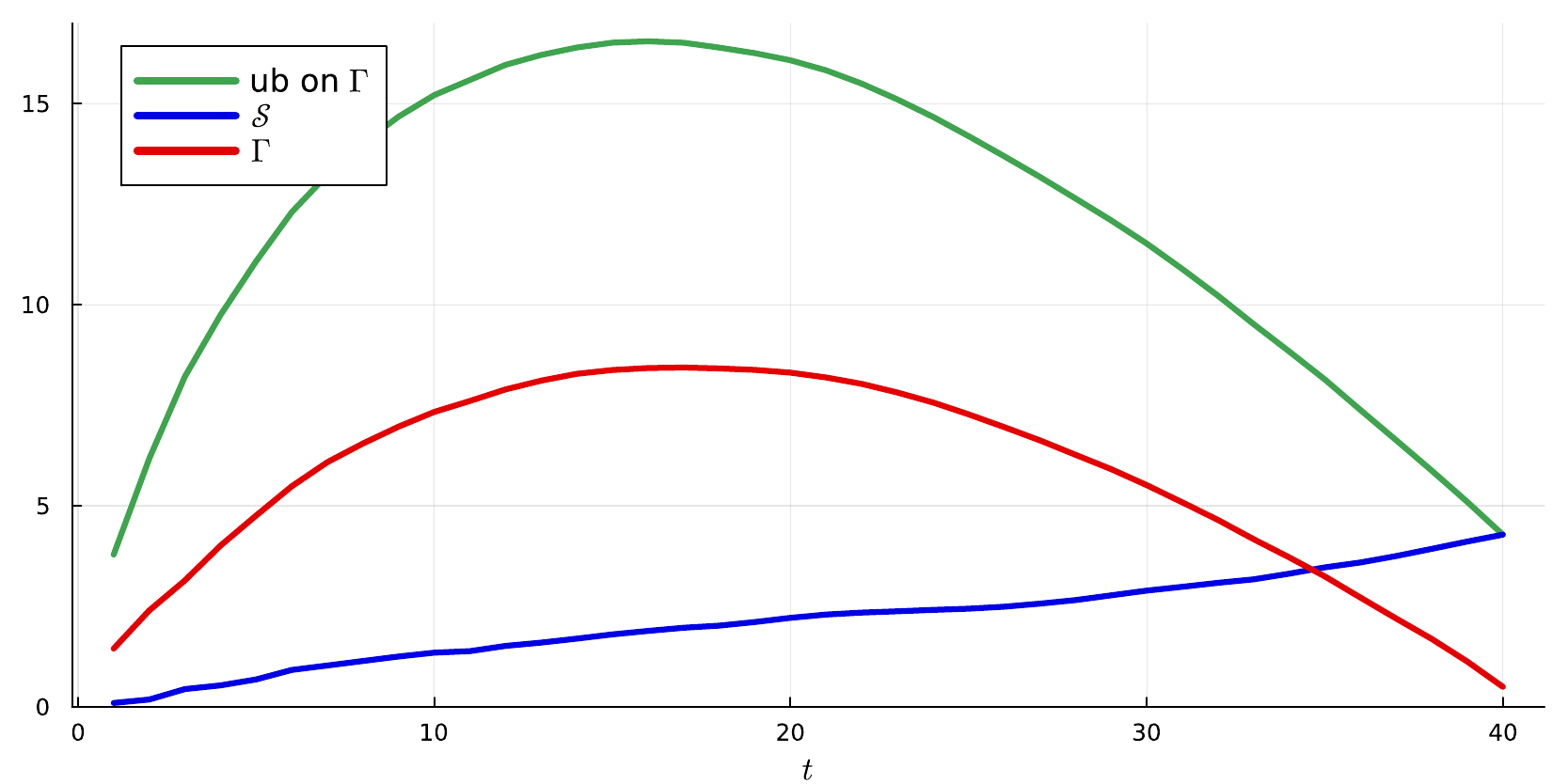}
    \caption{\ref{GMESP}, $s=40$}
    \label{fig:s40}
\end{subfigure}
\hfill
\begin{subfigure}{0.49\textwidth}
    \includegraphics[width=\textwidth]{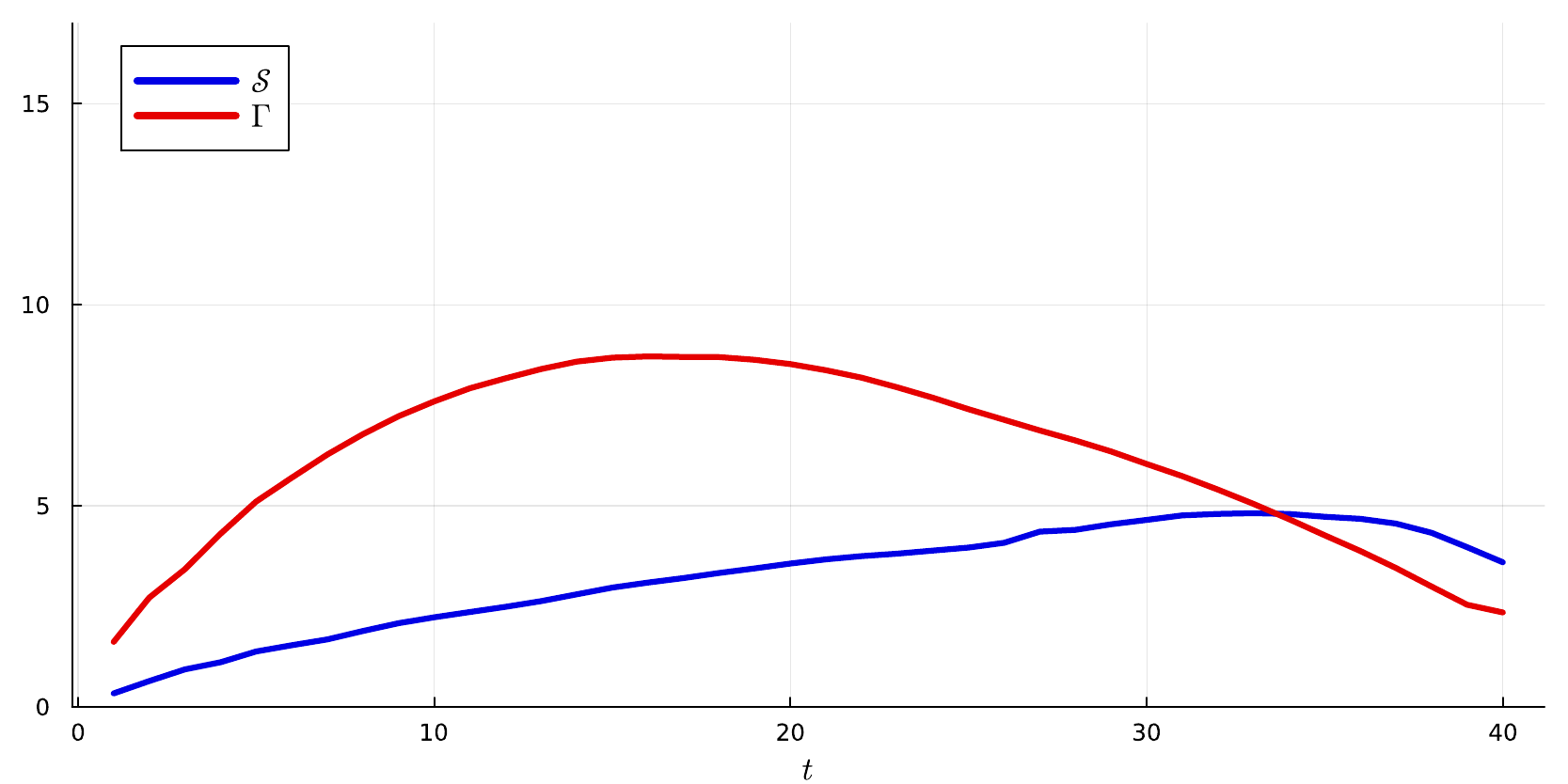}
    \caption{\ref{CGMESP}, $m=10$, $s=40$}
    \label{fig:s40c}
\end{subfigure}

\caption{Gaps for \ref{GMESP} and \ref{CGMESP}, varying $t$, with $s$ fixed ($n=63$)}
\label{fig:varying_t}
\end{figure}

 In each plot in  Figure \ref{fig:varying_t}, we see that after a certain value of $t$, the generalized factorization bound becomes stronger than the spectral bound. For \ref{GMESP}, the fraction of values of $t$ for which the generalized factorization bound is stronger is greater when $s$ is smaller; the same is true for \ref{CGMESP}. These observations are expected. We note that from  the formulae to calculate  the bounds,  the spectral bounds do not take $s$ into account, so it should become worse as $n-s$ becomes larger. On the other side,  the generalized factorization bound takes into account all  $s$ eigenvalues of the submatrix of $C$, so it should become worse as $s-t$ becomes larger.  When $t=s$, we see  that the generalized factorization bound dominates the spectral bound in all cases; once more, we note  that this is guaranteed when there are no side constraints (instances of MESP). Finally, we observe in Figure \ref{fig:varying_t}, that the bound on the gap for the generalized factorization bound for \ref{GMESP} is very loose. The plots confirm the analysis in Figures \ref{fig:varying_k} and \ref{fig:varying_kc}, showing now, for fixed $s$,  that the generalized factorization bound becomes more promising when the difference between $t$ and $s$ is small.

\subsection{Branch-and-bound}
 We assume familiarity with B\&B based on convex relaxation (see \cite{Muriqui2020}, for example). We note that, because \ref{DDGFact} is not an exact relaxation, we had to 
make some accommodations that are not  completely standard for B\&B.
More specifically, we have modified the \texttt{Juniper} code so that when we get an \emph{integer optimum} solution $\bar{x}$ for a relaxed  subproblem: 

\begin{enumerate}
\item[(i)] 
we evaluate the true objective value of $\bar{x}$ in  \ref{CGMESP}, to determine if we should increase the lower bound;
\item[(ii)] if the objective 
value of the relaxation of the subproblem being handled is greater than the lower bound, then we have to create child subproblems.
\end{enumerate}

We  initialize the B\&B with the lower bound computed as described in \S\ref{sec:LB}. Branching is the standard fixing of a variable at 0 or 1 for the two child subproblems. We do not create any child subproblem
for which there would be a unique 0/1 solution satisfying $\mathbf{e}^\top x = s$.

In Table \ref{tab:BB}, we present statistics for the B\&B applied  to instances where $C$ is the  leading principal submatrix of order $32$ of our $63$-dimensional covariance matrix and $s$ varies from 2 to 31. 
For each $s$, we solve an instance  of \ref{GMESP} with $t\!=\!s\!-\!1$ and an instance of MESP ($t\!=\!s$). For all instances of \ref{GMESP} and MESP, the initial lower bounds computed were  optimal, so the B\&B worked on proving optimality by decreasing the upper bound.  In the first column of Table \ref{tab:BB}, we show $s$, and in the other columns, we show the following statistics for \ref{GMESP} and MESP: the initial gap given by the difference between the upper and lower bounds at the root node (root gap),   the number of convex relaxations solved (nodes), the number of nodes pruned by bound (tot prun),  the number of variables fixed at 0 (1)  by the procedure described in Theorem \ref{thm:fixFact} (var fix 0 (1)), and the elapsed time (in seconds) for the B\&B (B\&B time).  For \ref{GMESP} we  additionally show: 

the number of nodes pruned by bound where the relaxations had integer optimum solutions (int prun),
 the number of integer optimum solutions obtained when solving relaxations (tot int),
 the mean  (resp., standard deviation) of the difference between the relaxations optimum values and  the objective function values of \ref{GMESP}/MESP at integer optimum solutions of relaxations 
(rel avg (resp, rel std)).

We observe that the difficulty of the problem significantly increases when $t$ becomes smaller than $s$, as the upper bounds become weaker, confirming the analysis of Figure \ref{fig:varying_k}. Nevertheless, the largest root gap for \ref{GMESP} is about one, and we can solve all instances in less than three hours. 
The quality of the generalized factorization bound for these instances of \ref{GMESP} can be evaluated by the number of nodes pruned by bound in the B\&B. For the nine most difficult instances (that took more than 5,000 seconds to be solved), 19\% of the nodes were pruned by bound on average.   Moreover, we notice that our matrix $C$ is full rank, therefore, the rank of the matrix $F(x)$ in the objective function of our relaxation is at least $s$. This means that the objective value of the relaxation is greater than the objective value of \ref{GMESP} even at integer solutions, and therefore, no node can be pruned by optimality, which would be  possible if the relaxation was exact. In fact,  when considering again only the nine most difficult instances of \ref{GMESP}, we see that, on average,  only on 5\% of the pruned nodes, the solution of the relaxation was integer. Moreover, for these nine instances, on average, only 5\% of the nodes where the relaxation had an integer optimum solution were pruned (in those nodes the  upper bound was not greater than the lower bound given by the heuristic solution). We can observe for these most difficult instances, an average difference of 0.73 between the relaxation objective value and the value of the objective function of \ref{GMESP}. We conclude that although the relaxation is not exact, the generalized factorization bound is strong enough to allow pruning a significant number of nodes in the B\&B.  Finally, we see that the bound is also strong enough to lead to an effective application of the variable-fixing procedure described in Theorem \ref{thm:fixFact}.

\begin{table}[ht!]
\resizebox{\textwidth}{!}{
\begin{tabular}{r|rrrrrrrrrr|rrrrrr}
                                & \multicolumn{10}{c|}{\ref{GMESP} ($t:=s-1$)}                                                                                                                                                                                                                                                                                                               & \multicolumn{6}{c}{MESP}                                                                                                                                             \\
\multicolumn{1}{c|}{\textit{s}} & \multicolumn{1}{c}{root} & \multicolumn{1}{c}{nodes} & \multicolumn{1}{c}{tot} & \multicolumn{1}{c}{int} & \multicolumn{1}{c}{tot} & \multicolumn{1}{c}{rel} & \multicolumn{1}{c}{rel} & \multicolumn{1}{c}{var}   & \multicolumn{1}{c}{var}   & \multicolumn{1}{c|}{B\&B} & \multicolumn{1}{c}{root} & \multicolumn{1}{c}{nodes} & \multicolumn{1}{c}{tot} & \multicolumn{1}{c}{var}   & \multicolumn{1}{c}{var}   & \multicolumn{1}{c}{B\&B} \\
                                & \multicolumn{1}{c}{gap}                    & \multicolumn{1}{c}{}      & \multicolumn{1}{c}{prun}                      & \multicolumn{1}{c}{prun}                          & \multicolumn{1}{c}{int}                     & \multicolumn{1}{c}{avg}                          & \multicolumn{1}{c}{std}                          & \multicolumn{1}{c}{fix 0} & \multicolumn{1}{c}{fix 1} & \multicolumn{1}{c|}{time} & \multicolumn{1}{c}{gap}  & \multicolumn{1}{c}{}      & \multicolumn{1}{c}{prun}  & \multicolumn{1}{c}{fix 0} & \multicolumn{1}{c}{fix 1} & \multicolumn{1}{c}{time} \\ \hline
2                               & 0.36                                                                & 186                       & 1                          & 1                              & 179                     & 0.44                        & 0.08                        & 75                        & 6                         & 1.44                      & 0.02                    & 3                         & 2                          & 0                         & 0                         & 0.04                     \\
3                               & 0.48                                                                & 557                       & 14                         & 9                              & 501                     & 0.55                        & 0.07                        & 269                       & 24                        & 4.22                      & 0.06                    & 10                        & 5                          & 52                        & 0                         & 0.08                     \\
4                               & 0.59                                                                & 2037                      & 89                         & 42                             & 1554                    & 0.61                        & 0.07                        & 1022                      & 109                       & 15.45                     & 0.12                     & 20                        & 8                          & 55                        & 0                         & 0.19                     \\
5                               & 0.70                                                                & 7404                      & 579                        & 187                            & 5070                    & 0.64                        & 0.07                        & 2724                      & 483                       & 42.28                     & 0.13                    & 33                        & 13                         & 58                        & 0                         & 0.29                     \\
6                               & 0.73                                                                & 20804                     & 2190                       & 569                            & 13251                   & 0.65                        & 0.07                        & 6668                      & 1657                      & 129.11                    & 0.14                    & 43                        & 16                         & 60                        & 0                         & 0.38                     \\
7                               & 0.76                                                                & 44592                     & 5500                       & 1194                           & 26958                   & 0.66                        & 0.08                        & 12582                     & 4141                      & 278.79                    & 0.14                    & 35                        & 13                         & 53                        & 0                         & 0.32                     \\
8                               & 0.77                                                                & 68682                     & 9697                       & 2056                           & 40021                   & 0.68                        & 0.08                        & 19272                     & 8011                      & 431.56                    & 0.13                    & 41                        & 16                         & 55                        & 0                         & 0.35                     \\
9                               & 0.82                                                                & 119525                    & 20711                      & 3648                           & 67307                   & 0.68                        & 0.08                        & 32902                     & 17728                     & 976.92                    & 0.12                     & 38                        & 15                         & 45                        & 0                         & 0.32                     \\
10                              & 0.82                                                                & 158998                    & 34012                      & 5048                           & 84404                   & 0.69                        & 0.08                        & 47229                     & 32943                     & 1112.10                    & 0.11                    & 37                        & 17                         & 49                        & 0                         & 0.29                     \\
11                              & 0.77                                                                & 147411                    & 35635                      & 4574                           & 72563                   & 0.69                        & 0.08                        & 50893                     & 46972                     & 1160.19                   & 0.06                    & 16                        & 8                          & 33                        & 0                         & 0.14                     \\
12                              & 0.72                                                                & 158170                    & 40735                      & 4522                           & 69865                   & 0.69                        & 0.08                        & 63096                     & 77637                     & 1491.04                   & 0.09                    & 46                        & 21                         & 42                        & 0                         & 0.41                     \\
13                              & 0.64                                                                & 142970                    & 36444                      & 3338                           & 53587                   & 0.70                        & 0.08                        & 67264                     & 100845                    & 1210.29                   & 0.17                    & 111                       & 49                         & 80                        & 0                         & 0.93                     \\
14                              & 0.68                                                                & 279210                    & 66511                      & 5059                           & 88512                   & 0.69                        & 0.08                        & 132296                    & 237550                    & 2821.80                    & 0.13                    & 196                       & 83                         & 131                       & 0                         & 1.72                     \\
15                              & 0.69                                                                & 386507                    & 87553                      & 5638                           & 106718                  & 0.68                        & 0.07                        & 176598                    & 358770                    & 5535.94                   & 0.13                    & 185                       & 81                         & 95                        & 0                         & 1.67                     \\
16                              & 0.69                                                                & 421261                    & 92967                      & 5301                           & 99083                   & 0.69                        & 0.07                        & 181361                    & 430153                    & 5178.73                   & 0.16                    & 323                       & 149                        & 136                       & 0                         & 2.83                     \\
17                              & 0.73                                                                 & 519556                    & 109265                     & 5012                           & 103521                  & 0.69                        & 0.06                        & 202088                    & 572510                    & 6399.18                   & 0.18                    & 305                       & 140                        & 107                       & 0                         & 2.77                     \\
18                              & 0.76                                                                & 544073                    & 110176                     & 4483                           & 98168                   & 0.70                        & 0.05                        & 190782                    & 625212                    & 6958.49                   & 0.25                    & 878                       & 391                        & 302                       & 0                         & 7.64                     \\
19                              & 0.83                                                                & 713393                    & 135220                     & 5655                           & 122085                  & 0.72                        & 0.04                        & 212475                    & 787816                    & 10159.16                  & 0.28                    & 1105                      & 489                        & 381                       & 0                         & 9.20                      \\
20                              & 0.86                                                                & 695101                    & 124354                     & 5504                           & 115999                  & 0.75                        & 0.03                        & 174689                    & 757696                    & 9751.58                   & 0.32                     & 1357                      & 586                        & 425                       & 0                         & 11.31                    \\
21                              & 0.90                                                                & 637108                    & 107189                     & 4280                           & 102540                  & 0.76                        & 0.03                        & 132122                    & 682763                    & 8584.51                   & 0.37                    & 2135                      & 850                        & 784                       & 0                         & 17.85                    \\
22                              & 0.95                                                                & 586069                    & 91694                      & 4994                           & 92685                   & 0.78                        & 0.03                        & 97344                     & 615472                    & 7617.47                   & 0.39                    & 2342                      & 855                        & 807                       & 0                         & 20.81                    \\
23                              & 0.99                                                                & 510390                    & 74438                      & 7159                           & 79882                   & 0.79                        & 0.03                        & 64390                     & 528729                    & 6330.14                   & 0.39                    & 1811                      & 660                        & 572                       & 0                         & 15.24                    \\
24                              & 1.01                                                                & 368953                    & 51277                      & 7806                           & 56157                   & 0.80                        & 0.03                        & 35163                     & 389241                    & 4149.70                    & 0.36                    & 1244                      & 463                        & 340                       & 0                         & 10.42                    \\
25                              & 1.01                                                                & 214937                    & 28168                      & 5440                           & 30889                   & 0.81                        & 0.02                        & 15377                     & 240272                    & 2149.56                   & 0.31                    & 632                       & 230                        & 130                       & 0                         & 5.12                     \\
26                              & 0.97                                                                & 98751                     & 12009                      & 2756                           & 13318                   & 0.81                        & 0.02                        & 5084                      & 121572                    & 921.15                    & 0.27                     & 463                       & 160                        & 65                        & 0                         & 3.74                     \\
27                              & 0.95                                                                & 43037                     & 4793                       & 1475                           & 5593                    & 0.82                        & 0.02                        & 1410                      & 58962                     & 389.61                    & 0.22                    & 241                       & 74                         & 30                        & 0                         & 1.99                     \\
28                              & 0.91                                                                & 13854                     & 1367                       & 511                            & 1667                    & 0.82                        & 0.02                        & 234                       & 22591                     & 128.65                    & 0.17                    & 130                       & 38                         & 10                        & 0                         & 1.15                     \\
29                              & 0.90                                                                  & 3403                      & 281                        & 134                            & 347                     & 0.82                        & 0.03                        & 20                        & 6547                      & 31.89                     & 0.13                    & 70                        & 16                         & 4                         & 0                         & 0.62                     \\
30                              & 0.87                                                                & 464                       & 27                         & 16                             & 35                      & 0.81                        & 0.05                        & 1                         & 949                       & 4.51                      & 0.08                    & 19                        & 4                          & 1                         & 0                         & 0.17                     \\
31                              & 0.83                                                                & 32                        & 1                          & 1                              & 1                       & 0.74                        & -                            & 0                         & 30                        & 0.32                      & 0.02                    & 2                         & 1                          & 0                         & 0                         & 0.02                    
\end{tabular}
}
\caption{Results for B\&B with variable fixing: \ref{GMESP}/MESP}
\label{tab:BB}
\end{table}

In Table \ref{tab:BB_constrained}, we present results for the experiments with \ref{CGMESP} and CMESP, where we added ten side constraints to the instances considered in Table \ref{tab:BB}. Unlike  in the case of \ref{GMESP}, the heuristic did not find the optimal solution for all instances of \ref{CGMESP}. Therefore, in addition to the statistics presented in Table \ref{tab:BB}, we also present for \ref{CGMESP},  in columns 3 and 4 of Table \ref{tab:BB_constrained}, the difference between the values of  the optimum solution and the heuristic solution  (heur gap), and the number of times the lower bound is increased during the execution of B\&B (imp inc).  Considering again the nine most difficult instances of \ref{CGMESP} (that took more than 10 seconds to be solved), we see that, on average,  17\% of the nodes were pruned by bound, and  in 9\% of them, the solution of the relaxation was integer. For these instances, we observe that, on average, 32\% of the nodes where the relaxation had an integer optimum solution were pruned. These results show that the integer solutions found for \ref{CGMESP} were more effective in reducing the total number of nodes of the B\&B tree. Nevertheless, the average difference between the relaxation objective value and the value of the objective function of \ref{CGMESP} at the integer solutions found for theses instances is 0.72, approximately the same that we had for \ref{GMESP}. Furthermore, we notice again that the difficulty of the problem increases when  $t$ becomes smaller than $s$. The time to solve \ref{CGMESP} is greater than the time to solve CMESP for most of the instances.  While the observations about the results for \ref{CGMESP} are similar to the observations made about the results for \ref{GMESP}, we see that the addition of side constraints made the instances much easier. The maximum time to solve an instance was reduced from about 10,000 seconds  to less than 600 seconds  when constraints were added, regardless of the fact that  the root gap is significantly larger for \ref{CGMESP} than for \ref{GMESP}. Although some of the increase in the root gap can be attributed to the gap between the optimal solution and the heuristic lower bound, in most cases it is actually caused by a larger difference between the upper bound at the root node and the optimum value. Despite this fact, the instances can be solved much faster. Finally, it is interesting to note that adding side constraints makes the difficulty of the instances harder to predict. Contrary to  what we see in Table \ref{tab:BB}, we see that times can increase and decrease  as the value of $s$ increases. 

\begin{table}[ht!]
\resizebox{\textwidth}{!}{
\begin{tabular}{r|rrrrrrrrrrrr|rrrrrr}
                                & \multicolumn{12}{c|}{\ref{CGMESP} ($t:=s-1$)}                                                                                                                                                                                                                                                                                                               & \multicolumn{6}{c}{CMESP}                                                                                                                                             \\
\multicolumn{1}{c|}{\textit{s}} & \multicolumn{1}{c}{root} & \multicolumn{1}{c}{heur} & \multicolumn{1}{c}{imp} & \multicolumn{1}{c}{nodes} & \multicolumn{1}{c}{tot} & \multicolumn{1}{c}{int} & \multicolumn{1}{c}{tot} & \multicolumn{1}{c}{rel} & \multicolumn{1}{c}{rel} & \multicolumn{1}{c}{var}   & \multicolumn{1}{c}{var}   & \multicolumn{1}{c|}{B\&B} & \multicolumn{1}{c}{root} & \multicolumn{1}{c}{nodes} & \multicolumn{1}{c}{tot} & \multicolumn{1}{c}{var}   & \multicolumn{1}{c}{var}   & \multicolumn{1}{c}{B\&B} \\
                                & \multicolumn{1}{c}{gap}  & \multicolumn{1}{c}{gap}                      & \multicolumn{1}{c}{inc}                     & \multicolumn{1}{c}{}      & \multicolumn{1}{c}{prun}                      & \multicolumn{1}{c}{prun}                          & \multicolumn{1}{c}{int}                     & \multicolumn{1}{c}{avg}                          & \multicolumn{1}{c}{std}                          & \multicolumn{1}{c}{fix 0} & \multicolumn{1}{c}{fix 1} & \multicolumn{1}{c|}{time} & \multicolumn{1}{c}{gap}  & \multicolumn{1}{c}{}      & \multicolumn{1}{c}{prun}  & \multicolumn{1}{c}{fix 0} & \multicolumn{1}{c}{fix 1} & \multicolumn{1}{c}{time} \\ \hline
2                               & 0.93                    & 0.37                    & 1                        & 121                       & 2                          & 1                              & 8                       & 0.43                        & 0.06                        & 149                       & 0                         & 6.80                      & 0.85                    & 23                        & 3                          & 63                        & 0                         & 5.63                     \\
3                               & 1.18                    & -                        & 0                        & 27                        & 0                          & 0                              & 3                       & 0.60                        & 0.01                        & 51                        & 0                         & 0.49                      & 0.88                    & 19                        & 2                          & 36                        & 0                         & 0.34                     \\
4                               & 0.99                    & -                        & 0                        & 35                        & 0                          & 0                              & 2                       & 0.60                        & 0.00                        & 93                        & 2                         & 0.59                      & 0.63                    & 17                        & 4                          & 63                        & 2                         & 0.25                     \\
5                               & 1.08                    & -                        & 0                        & 741                       & 76                         & 18                             & 43                      & 0.60                        & 0.08                        & 1213                      & 26                        & 10.85                     & 0.72                    & 144                       & 36                         & 246                       & 3                         & 2.25                     \\
6                               & 1.24                    & 0.18                    & 1                        & 419                       & 42                         & 5                              & 21                      & 0.62                        & 0.07                        & 790                       & 32                        & 6.49                      & 0.76                    & 97                        & 24                         & 169                       & 2                         & 1.45                     \\
7                               & 1.69                    & 0.03                    & 1                        & 107                       & 1                          & 1                              & 8                       & 0.67                        & 0.04                        & 121                       & 10                        & 1.98                      & 1.36                    & 57                        & 4                          & 92                        & 5                         & 1.04                     \\
8                               & 2.12                    & -                        & 0                        & 122                       & 0                          & 0                              & 1                       & 0.58                        & -                            & 74                        & 5                         & 2.26                      & 2.02                    & 105                       & 0                          & 108                       & 8                         & 1.86                     \\
9                               & 1.15                    & -                        & 0                        & 625                       & 39                         & 10                             & 70                      & 0.65                        & 0.09                        & 825                       & 160                       & 9.27                      & 0.69                    & 165                       & 41                         & 276                       & 66                        & 2.59                     \\
10                              & 1.21                    & -                        & 0                        & 325                       & 11                         & 0                              & 44                      & 0.69                        & 0.03                        & 372                       & 96                        & 3.59                      & 0.64                    & 84                        & 13                         & 85                        & 15                        & 1.43                     \\
11                              & 1.65                    & -                        & 0                        & 1046                      & 121                        & 6                              & 34                      & 0.68                        & 0.03                        & 1163                      & 452                       & 11.85                     & 1.53                    & 527                       & 127                        & 638                       & 262                       & 8.30                     \\
12                              & 2.75                    & 0.22                    & 2                        & 195                       & 0                          & 0                              & 6                       & 0.74                        & 0.02                        & 142                       & 38                        & 2.49                      & 2.56                    & 191                       & 2                          & 153                       & 83                        & 3.20                     \\
13                              & 1.77                    & 0.05                    & 1                        & 2904                      & 396                        & 22                             & 106                     & 0.67                        & 0.05                        & 2733                      & 1227                      & 43.59                     & 1.60                    & 1061                      & 319                        & 1432                      & 635                       & 17.42                    \\
14                              & 1.71                    & -                        & 0                        & 47                        & 0                          & 0                              & 3                       & 0.67                        & 0.00                        & 41                        & 23                        & 0.77                      & 1.45                    & 39                        & 0                          & 40                        & 33                        & 0.72                     \\
15                              & 1.14                    & -                        & 0                        & 1653                      & 392                        & 13                             & 58                      & 0.68                        & 0.03                        & 1266                      & 1022                      & 20.46                     & 0.75                    & 343                       & 117                        & 310                       & 225                       & 5.56                     \\
16                              & 1.36                    & 0.18                    & 1                        & 11198                     & 2592                       & 208                            & 907                     & 0.70                        & 0.04                        & 7157                      & 6339                      & 132.42                    & 1.04                    & 1898                      & 747                        & 2019                      & 1416                      & 32.60                    \\
17                              & 1.97                    & -                        & 0                        & 876                       & 75                         & 2                              & 9                       & 0.78                        & 0.03                        & 552                       & 668                       & 11.24                     & 1.53                    & 415                       & 79                         & 343                       & 366                       & 7.23                     \\
18                              & 1.68                    & 0.31                    & 1                        & 29149                     & 7943                       & 448                            & 1175                    & 0.74                        & 0.04                        & 13686                     & 18518                     & 353.74                    & 1.29                    & 3383                      & 1548                       & 3046                      & 3524                      & 59.50                    \\
19                              & 1.64                    & 0.32                    & 2                        & 3664                      & 557                        & 72                             & 252                     & 0.73                        & 0.04                        & 2035                      & 2806                      & 48.31                     & 1.16                    & 1030                      & 353                        & 938                       & 1175                      & 16.49                    \\
20                              & 2.55                    & -                        & 0                        & 209                       & 1                          & 0                              & 4                       & 0.76                        & 0.00                        & 75                        & 215                       & 2.99                      & 2.21                    & 213                       & 3                          & 75                        & 195                       & 3.77                     \\
21                              & 1.47                    & 0.04                    & 1                        & 47344                     & 10268                      & 820                            & 2043                    & 0.79                        & 0.03                        & 10281                     & 36017                     & 561.97                    & 0.96                    & 4812                      & 1846                       & 1197                      & 4043                      & 61.19                    \\
22                              & 2.91                    & 0.08                    & 1                        & 660                       & 1                          & 0                              & 4                       & 0.79                        & 0.05                        & 100                       & 360                       & 8.70                      & 2.45                    & 526                       & 8                          & 122                       & 473                       & 7.75                     \\
23                              & 1.45                    & -                        & 0                        & 135                       & 8                          & 0                              & 6                       & 0.67                        & 0.01                        & 22                        & 143                       & 1.64                      & 1.03                    & 69                        & 10                         & 21                        & 110                       & 1.18                     \\
24                              & 1.38                    & -                        & 0                        & 1677                      & 163                        & 14                             & 27                      & 0.75                        & 0.06                        & 223                       & 1959                      & 22.66                     & 0.79                    & 434                       & 114                        & 84                        & 572                       & 7.54                     \\
25                              & 1.68                    & -                        & 0                        & 199                       & 1                          & 0                              & 1                       & 0.72                        & -                            & 24                        & 201                       & 3.04                      & 1.23                    & 158                       & 8                          & 17                        & 193                       & 3.32                     \\
26                              & 3.62                    & -                        & 0                        & 55                        & 0                          & 0                              & 4                       & 0.72                        & 0.00                        & 1                         & 35                        & 0.79                      & 3.18                    & 31                        & 0                          & 1                         & 30                        & 0.53                     \\
27                              & 3.47                    & 0.97                    & 3                        & 164                       & 4                          & 1                              & 4                       & 0.73                        & 0.01                        & 5                         & 205                       & 2.23                      & 2.87                    & 131                       & 9                          & 3                         & 213                       & 2.50                     \\
28                              & 2.66                    & -                        & 0                        & 34                        & 0                          & 0                              & 1                       & 0.75                        & -                            & 1                         & 54                        & 0.55                      & 2.16                    & 32                        & 1                          & 0                         & 49                        & 0.79                     \\
29                              & 1.09                    & -                        & 0                        & 78                        & 5                          & 0                              & 1                       & 0.73                        & -                            & 1                         & 192                       & 1.17                      & 0.40                    & 39                        & 13                         & 1                         & 127                       & 0.83                     \\
30                              & 1.23                    & -                        & 0                        & 4                         & 0                          & 0                              & 0                       & -                            & -                            & 0                         & 10                        & 0.07                      & 0.66                    & 4                         & 0                          & 0                         & 22                        & 0.16                     \\
31                              & 0.74                    & -                        & 0                        & 2                         & 0                          & 0                              & 0                       & -                            & -                            & 0                         & 0                         & 0.03                      & 0.00                    & 1                         & 0                          & 0                         & 0                         & 0.02                    
\end{tabular}
}
\caption{Results for  B\&B with variable fixing: \ref{CGMESP}/CMESP}
\label{tab:BB_constrained}
\end{table}

\section{Outlook}\label{sec:out} We are left with some clear challenges. 
A key one is to obtain better upper bounds when $s-t$ is large  (which better fits the motivating PCA selection problem from \S1), in hopes of exactly solving \ref{GMESP} 
instances by B\&B in such cases.
In connection with this, 
we would like a bound that provably dominates the spectral bound (when $t<s$), improving on
what we established with
Theorem 
\ref{thm:almost_dominates} and observed in Figure \ref{fig:varying_k}. Also, we would like to 
improve Theorem \ref{thm:almost_dominates} for the constrained case.

\medskip\noindent
{\bf Acknowledgments.}
\thanks{G. Ponte was supported in part by CNPq GM-GD scholarship 161501/2022-2. 
M. Fampa was supported in part by CNPq grant 307167/ 2022-4.
J. Lee was supported in part by AFOSR grant FA9550-22-1-0172.
}

\bibliography{fact_paper} 

\end{document}